\documentclass[10pt,reqno]{amsart}
\usepackage{amsthm}
\usepackage{esint}
\usepackage{mathrsfs}

\makeatletter
\numberwithin{equation}{section}

\newtheorem{thm}{Theorem}[section]
\newtheorem{corollary}[thm]{Corollary}
\newtheorem{lem}[thm]{Lemma}

\theoremstyle{definition}
\newtheorem{defn}[thm]{Definition}

\newtheorem{assumption}[thm]{Assumption}
\theoremstyle{remark}
\newtheorem{rem}[thm]{Remark}

\newcommand\aint{-\hspace{-0.38cm}\int}

\newcommand\bL{\mathbb{L}}
\newcommand\bR{\mathbb{R}}
\newcommand\bC{\mathbb{C}}
\newcommand\bH{\mathbb{H}}
\newcommand\bI{\mathbb{I}}

\newcommand\bM{\mathbb{M}}
\newcommand\bE{\mathbb{E}}
\newcommand\bN{\mathbb{N}}


\newcommand\fR{\mathbb{R}}
\newcommand\fL{\mathbf{L}}

\newcommand\fH{\mathbf{H}}


\newcommand\cF{\mathcal{F}}

\newcommand\cH{\mathcal{H}}
\newcommand\cI{\mathcal{I}}
\newcommand\cJ{\mathcal{J}}

\newcommand\cP{\mathcal{P}}
\newcommand\cR{\mathcal{R}}
\newcommand\cS{\mathcal{S}}
\newcommand\cM{\mathcal{M}}
\newcommand\cT{\mathcal{T}}

\theoremstyle{remark}
\newtheorem{remark}[thm]{Remark}

\newcommand{\mysection}[1]{\section{#1}}

\begin{document}

\title[A Sobolev space theory for time fractional SPDE]{A Sobolev Space theory for stochastic partial differential equations
with time-fractional  derivatives}

\author{Ildoo Kim}
\address{Center for Mathematical Challenges, Korea Institute for Advanced Study, 85 Hoegiro Dongdaemun-gu,
Seoul 130-722, Republic of Korea} \email{waldoo@kias.re.kr}
\thanks{The research of the first author was supported by the TJ Park Science  
         Fellowship of POSCO TJ Park Foundation}
         
\author{Kyeong-Hun Kim}
\address{Department of Mathematics, Korea University, 1 Anam-dong,
Sungbuk-gu, Seoul, 136-701, Republic of Korea}
\email{kyeonghun@korea.ac.kr}
\thanks{This work was supported by Samsung Science  and Technology Foundation under Project Number SSTF-BA1401-02}

\author{Sungbin Lim}
\address{Department of Mathematics, Korea University, 1 Anam-dong, Sungbuk-gu, Seoul,
136-701, Republic of Korea} \email{sungbin@korea.ac.kr}

\subjclass[2010]{60H15, 45K05, 35R11}

\keywords{Stochastic  partial differential  equations,  time-fractional evolution equation, Sobolev space theory, space-time white noise}

\maketitle

\begin{abstract}

In this article we present an $L_p$-theory ($p\geq 2$) for  the time-fractional quasi-linear
stochastic partial differential equations (SPDEs) of  type
$$
\partial^{\alpha}_tu=L(\omega,t,x)u+f(u)+\partial^{\beta}_t \sum_{k=1}^{\infty}\int^t_0 ( \Lambda^k(\omega,t,x)u+g^k(u))dw^k_t,
$$
where $\alpha\in (0,2)$, $\beta <\alpha+\frac{1}{2}$, and $\partial^{\alpha}_t$ and $\partial^{\beta}_t$ denote the Caputo derivative of order $\alpha$ and $\beta$ respectively.  The processes $w^k_t$, $k\in \bN=\{1,2,\cdots\}$,  are independent one-dimensional Wiener processes defined on a probability space $\Omega$, $L$ is a second order  operator of either divergence or non-divergence type, and $\Lambda^k$ are  linear operators of order up to two.  The  coefficients of the equations  depend
on  $\omega (\in \Omega), t,x$   and are allowed to be discontinuous.
This class of
SPDEs can be used to describe random effects on transport of
particles in medium with thermal memory or particles subject to
sticking and trapping.

\vspace{1mm}

We prove uniqueness and existence results of strong solutions in appropriate Sobolev spaces, and obtain maximal $L_p$-regularity of the solutions.
Our result certainly
 includes the $L_p$-theory for SPDEs driven by space-time white noise if the space dimension $d< 4-2(2\beta-1)\alpha^{-1}$. In particular, if $\beta<1/2+\alpha/4$ then we can cover   space-time white noise driven SPDEs with space  dimension $d=1,2,3$. This is a quite interesting result since in case of the classical  SPDE, i.e. $\alpha=\beta=1$,  $d$ must be one.
\end{abstract}

\mysection{Introduction}

Fractional calculus and related equations have become important
topics in many science and engineering fields. For instance, they
appear in mathematical modeling \cite{MS, V}, control engineering
\cite{caponetto2010fractional, podlubny1999fractional}, biophysics
\cite{glockle1995fractional, langlands2009fractional},
electromagnetism \cite{engheia1997role, tarasov2006electromagnetic},
polymer science \cite{bagley1983theoretical, metzler1995relaxation},
hydrology \cite{benson2000application, SBMW}, and even finance
\cite{raberto2002waiting, scalas2000fractional}.
While the classical heat equation $\partial_t u=\Delta u$ describes
the heat propagation in homogeneous mediums,  the time-fractional
diffusion equation $\partial^\alpha_t u=\Delta  u$, $\alpha\in
(0,1)$,  can be used to model the anomalous diffusion  exhibiting
subdiffusive behavior, due to particle sticking and trapping
phenomena \cite{metzler1999anomalous, metzler2004restaurant}. The
fractional wave equation $\partial^{\alpha}_t u=\Delta u$,
$\alpha\in (1,2)$ governs the propagation of mechanical diffusive
waves in viscoelastic media \cite{mainardi1995fractional}.
The
fractional differential equations have an another important issue in
the probability theory related to non-Markovian diffusion processes
with a memory \cite{metzler2000boundary, MK}.
 However, so far, the
study of time-fractional partial differential equations is mainly
restricted to deterministic equations.
For the results on deterministic equations, we refer the reader e.g.
to \cite{SY,Za} ($L_2$-theory),
 \cite{zacher2005maximal} ($L_p$-theory),
and \cite{Pr1991,clement1992global,KKL2014} ($L_q(L_p)$-theory).
Also see
 \cite{clement2004quasilinear} for  $BUC_{1-\beta}([0,T];X)$-type estimates,
 \cite{clement2000schauder} for Schauder estimates, \cite{zacher2013giorgi} for DeGirogi-Nash type estimate, and \cite{zacher2010weak} for Harnack inequality.

\vspace{1mm}

  In this article we prove  existence and  uniqueness results in Sobolev spaces for the time-fractional SPDEs  of non-divergence type
\begin{align}
\partial_{t}^{\alpha}u & =[a^{ij}u_{x^{i}x^{j}}+b^{i}u_{x^{i}}+cu+f(u)]\nonumber \\
 & \qquad+\partial_{t}^{\beta}\int_{0}^{t}[\sigma^{ijk}u_{x^{i}x^{j}}+\mu^{ik}u_{x^{i}}+\nu^{k}u+g^{k}(u)]dw_{s}^{k}
                                \label{eq:target SPDE non-div}
\end{align}
as well as of divergence type
\begin{align}
\partial_{t}^{\alpha}u & =\left[D_{x^{i}}\left(a^{ij}u_{x^{j}}+b^{i}u+f^{i}(u)\right)+cu+h(u)\right]\nonumber \\
 & \qquad+\partial_{t}^{\beta}\int_{0}^{t}[\sigma^{ijk}u_{x^{i}x^{j}}+\mu^{ik}u_{x^{i}}+\nu^{k}u+g^{k}(u)]dw_{s}^{k}.
                                                                      \label{eq:target SPDE div}
\end{align}
Here,  $\alpha$ and $\beta$ are  arbitrary constants  satisfying
\begin{equation}
          \label{beta}
           \alpha\in(0,2), \quad  \quad \beta\in
           (-\infty,\alpha+\frac{1}{2}),
\end{equation}
and the equations are interpreted by their integral forms (see
Definition \ref{defn 1}). The notation $\partial^{\gamma}_t$ denotes the
 Caputo derivative and the Riemann-Liouville integral of order
$\gamma$ if $\gamma\geq 0$ and if $\gamma\leq 0$, respectively (see
Section \ref{section2}).
The coefficients
$a^{ij},b^{i},c,\sigma^{ijk},\mu^{ik}$, and $\nu^{k}$ are functions
depending on
$(\omega,t,x)\in\Omega\times[0,\infty]\times\mathbb{R}^{d}$,  and the
nonlinear terms  $f$, $f^{i}$, $h$, and $g^{k}$ depend on $(\omega,t,x)$ and  the
unknown $u$.
The indices $i$ and $j$ go from 1 to  $d$ and
$k$ runs through $\{1,2,3,\cdots\}$.    Einstein's summation
convention on $i$, $j$, and $k$ is assumed throughout the article. By having infinitely many Wiener processes in the equations, we
cover SPDEs for measure valued processes, for instance, driven by
space-time white noise (see Section \ref{space-time}). Concerning the leading coefficients $a^{ij}$, while  in the divergence case we assume they are measurable in $\omega$, piecewise continuous in $t$, and uniformly continuous in  $x$, in the non-divergence case we further assume $\sup_{\omega,t}|a^{ij}(t,\cdot)|_{C^{|\gamma|}(\bR^d)}<\infty$ to get $H^{\gamma+2}_p$-valued solutions. Here $\gamma\in \bR$ and $p\geq 2$.

\vspace{1mm}

Our motivation of studying time-fractional SPDEs of type (\ref{eq:target SPDE non-div}) and (\ref{eq:target SPDE div}) naturally arises from the consideration
of the heat equation in a material with thermal memory and random external forces.
For a  detailed derivation of these  equations, we refer the reader to \cite{CKK}.
 If $\alpha=\beta=1$ then (\ref{eq:target SPDE non-div}) and (\ref{eq:target SPDE div}) are classical second-order SPDEs of non-divergence and divergence types.  Hence our equations are far-reaching generalizations. The condition $\beta<\alpha+1/2$ in (\ref{beta}) is necessary to make sense of the equations. We refer again to \cite{CKK} for the reason.

\vspace{1mm}

To the best of our knowledge, \cite{desch2008stochastic,desch2013maximal, desch2011p}    made first attempt to study the mild solutions to  time-fractional SPDEs.
The authors in   \cite{desch2008stochastic,desch2013maximal, desch2011p} applied $H^{\infty}$-functional calculus technique to obtain    $L_{p}(L_q)$-estimates ($p,q\geq 2$)  for the mild solution to
  the integral equation of the type
\begin{equation}
               \label{eqn h}
U(t)+A \int^t_0k_1(t-s)U(s)ds=\int^t_0k_2(t-s)G(s)dW_s.
\end{equation}
Here, $k_1(t)=\frac{1}{\Gamma(\alpha)}t^{\alpha-1}$, $k_2=\frac{1}{\Gamma(\beta')}t^{\beta'-1}$, and $A$ is the generator of a bounded analytic semigroup on $L_q$ and  assumed to admit a bounded
$H^{\infty}$-calculus on $L_q$. Actually, due to Lemma \ref{lem:s int}(iii), (\ref{eqn h}) is equivalent to
\begin{equation}
            \label{eqn h2}
\partial^{\alpha}_t U=-AU+\partial^{\beta}_t\int^t_0 G(s)dW_s,
\end{equation}
where $\beta:=1+\alpha-\beta'$. Thus equation (\ref{eqn h2}) is similar to our equations, but it is quite simple compared to ours. For instance,  the operator $A$ in (\ref{eqn h}) is independent of $(\omega,t)$, and  equation (\ref{eqn h}) contains only a additive noise. The  extra condition $\beta>\alpha-1$ is also assumed there.
We also refer to  a recent article \cite{CKK}, where  an $L_2$-theory for    time-fractional SPDEs  is  presented under the extra condition $\alpha,\beta\in (0,1)$.
The main tool used in  \cite{CKK} for  the $L_2$-estimate is the Parseval's identity. 


\vspace{1mm}

In this article we exploit Krylov's analytic approach to study the strong solutions of much general equations. To obtain $L_p$-estimates of solutions,  we control the sharp functions of the solutions in terms of the maximal functions
  of free terms $f$, $h$, and $g$, and then apply  Hardy-Littlewood theorem and Fefferman-Stein theorem.  The lack of integrability of derivatives of kernels related to the representation
  of solutions to some model equations causes main difficulties in carrying out this procedure.
  We
 prove that for any $\gamma\in \bR$ and $p\geq 2$,  under a minimal
regularity assumption (depending on $\gamma$) on  the coefficients and the nonlinear terms, equation (\ref{eq:target SPDE non-div}) with
zero initial condition has a unique $H^{\gamma+2}_p$-valued
solution, and for this solution the following estimate holds:
\begin{equation}
     \label{es}
\|u\|_{\bH^{\gamma+2}_p(T)}\leq N\left(\|f(0)\|_{\bH^{\gamma+2}_p(T)}
+\|g(0)\|_{\bH^{\gamma+c'_0}_p(T, l_2))}\right),
\end{equation}
where $\bH^{\nu}_p(T)=L_p(\Omega\times [0,T]; H^{\nu}_p)$,
$\bH^{\nu}_p(T,l_2)=L_p(\Omega\times [0,T]; H^{\nu}_p(l_2))$  and
$c'_0>\frac{(2\beta-1)_+}{\alpha}=:c_0$ if $\beta = 1/2$, and
$c'_0=c_0$ if $\beta\neq 1/2$.  The result for $\gamma\leq 0$ is
needed to handle SPDEs driven by space-time white noise with the space dimension $d< 4-2(2\beta-1)\alpha^{-1}$.  For
divergence type equation (\ref{eq:target SPDE div}),  we prove uniqueness, existence, and a version of
(\ref{es}) for $\gamma=-1$.

\vspace{1mm}

 Our main results, Theorem
\ref{thm:main results non-div} and Theorem \ref{thm:main results
div}, substantially improve the results of
\cite{desch2008stochastic,desch2013maximal, desch2011p} in  the
sense that (i) we study the strong solutions, (ii) our coefficients
are much general  and  allowed to  be discontinuous and depend on
$(\omega,t,x)$, (iii) the second and lower order derivatives of
solutions appear in the stochastic part of our equations, (iv)
non-linear terms are also considered,  (v) we do not impose the
lower bound of $\beta$ and there is no restriction on $\gamma$, and (vi) we also cover SPDEs driven by space-time white noise with space dimension
$d< 4-2(2\beta-1)\alpha^{-1}$.

\vspace{1mm}
This article is organized as follows. In Section 2 we present some preliminaries
on the fractional calculus and introduce our main results. We prove
a parabolic Littlewood-Paley
inequality for a model time-fractional SPDE in Section 3.  The unique solvability and a priori estimate  for the model equation are
obtained in Section 4. We prove Theorems \ref{thm:main results non-div}
and \ref{thm:main results div} in Section 5 and 6, respectively. In Section \ref{space-time}  we give an application to SPDE driven by space-time white noise.

\vspace{1mm}

Finally we introduce some notation used in this article. We use ``:='' to denote
a definition. As usual, $\mathbb{R}^{d}$ stands for the $d$-dimensional
Euclidean space of points $x=(x_{1},\ldots,x_{d})$, $B_{r}(x):=\{y\in\mathbb{R}^{d}:|x-y|<r\}$,
and $B_{r}:=B_{r}(0)$.
$\mathbb{N}$ denotes the natural number system
and $\mathbb{C}$ indicates the complex number system. For $i=1,\ldots,d$,
multi-indices $\mathfrak{a}=(\mathfrak{a}_{1},\ldots,\mathfrak{a}_{d})$,
$\mathfrak{a}_{i}\in\{0,1,2,\ldots\}$, and functions $u(x)$ we set
\[
u_{x^{i}}=\frac{\partial u}{\partial x^{i}}=D_{^{i}}u,\quad D_{x}^{\mathfrak{a}}u=D_{1}^{\mathfrak{a}_{1}}\cdots D_{d}^{\mathfrak{a}_{d}}u,\quad|\mathfrak{a}|=\mathfrak{a}_{1}+\cdots+\mathfrak{a}_{d}.
\]
We also use the notation $D_{x}^{m}$ for a partial derivative of
order $m$ with respect to $x$.
By $C_{c}^{\infty}(\mathbb{R}^{d};H)$,
we denote the collection of $H$-valued smooth functions having compact support
in $\mathbb{R}^{d}$, where $H$ is a Hilbert space.
In particular,  $C_{c}^{\infty}:=C_{c}^{\infty}(\mathbb{R}^{d}; \bR)$. $\cS(\bR^d)$ denotes the Schwartz class on $\bR^d$.
For $p\geq 1$ and a normed space $F$ by $L_{p}(\mathcal{O};F)$ we denote the set
of $F$-valued Lebesgue measurable function $u$ on $\mathcal{O}$ satisfying
\[
\left\Vert u\right\Vert _{L_{p}(\mathcal{O};F)}:=\left(\int_{\mathcal{O}}\|u(x)\|_{F}^{p}dx\right)^{1/p}<\infty.
\]
We write $L_{p}(\mathcal{O})=L_{p}(\mathcal{O};\mathbb{R})$ and $L_{p}=L_{p}(\mathbb{R}^{d})$.
Generally, for a given measure space $(X,\mathcal{M},\mu)$, $L_{p}(X,\cM,\mu;F)$
denotes the space of all $F$-valued $\mathcal{M}^{\mu}$-measurable functions
$u$ so that
\[
\left\Vert u\right\Vert _{L_{p}(X,\cM,\mu;F)}:=\left(\int_{X}\left\Vert u(x)\right\Vert _{F}^{p}\mu(dx)\right)^{1/p}<\infty,
\]
where $\mathcal{M}^{\mu}$ denotes the completion of $\cM$ with respect to the measure $\mu$.
If there is no confusion for the given measure and $\sigma$-algebra, we usually omit the measure and the $\sigma$-algebra. We denote by
\[
\mathcal{F}\left( f\right) (\xi)=\frac{1}{(2\pi)^{d/2}}\int_{\mathbb{R}^{d}}e^{-i\xi\cdot x}f(x)dx,\quad\mathcal{F}^{-1}( g) (x):=\frac{1}{(2\pi)^{d/2}}\int_{\mathbb{R}^{d}}e^{i\xi\cdot x}g(\xi)d\xi,
\]
the Fourier and the inverse Fourier transforms of $f$ in $\mathbb{R}^{d}$  respectively. $\lfloor a\rfloor$
is the greatest integer which is less than or equal to $a$, whereas
$\lceil a\rceil$ denotes the smallest integer which is greater than
or equal to $a$.
$a \wedge b := \min \{a,b\}$, $a \vee b := \max \{a,b\}$, $a_+:=a \vee 0$, and $a_-:= -(a \wedge 0)$.
If we write $N=N(a,b,\cdots)$, this means
that the constant $N$ depends only on $a,b,\cdots$.  Throughout
the article, for functions depending on $(\omega,t,x)$, the argument
$\omega \in \Omega$ will be usually omitted.

\mysection{Main Results}

\label{section2}

First we introduce some elementary facts related to the fractional calculus.
We refer the reader to \cite{Po, SKM, baleanu2012fractional,herrmann2011fractional} for more details.
For $\varphi\in L_{1}((0,T))$ and $n=1,2,\cdots$, define  $n$-th order integral
$$I^n_t \varphi (t):=\int^t_0 (I^{n-1} \varphi)(s)ds), \quad \quad  (I^0_t\varphi:=\varphi).
$$
In general, the Riemann-Liouville fractional integral
of the order $\alpha\geq 0$ is defined as
$$
I_{t}^{\alpha}\varphi:=\frac{1}{\Gamma(\alpha)}\int_{0}^{t}(t-s)^{\alpha-1}\varphi(s)ds, \quad 0\leq t\leq T.
$$
By Jensen's inequality, for $p\in[1,\infty]$,
\begin{equation}
                      \label{eq:Lp continuity of I}
\left\Vert I_{t}^{\alpha}\varphi\right\Vert _{L_{p}(0,T)}\leq
N(T,\alpha)\left\Vert \varphi\right\Vert _{L_{p}(0,T)}.
\end{equation}
Thus $I^{\alpha}_t\varphi(t)$ is  well-defined and finite for almost all $t\leq T$.  This inequality shows that  if $1\leq p<\infty$ and $\varphi_n \to \varphi$  in $L_p([0,T])$, then $I^{\alpha}\varphi_n$ also converges to $I^{\alpha}_t\varphi$ in $L_p([0,T])$. The inequality for $p=\infty$ implies that if $f_n(\omega,t)$ converges in probability uniformly in
$[0, T ]$ then so does $I^{\alpha}_t f_n$.

 Using  Fubini's theorem one can easily show for any $\alpha,\beta\geq 0$
\begin{equation}
                                                          \label{eqn 4.15.3}
I^{\alpha}_tI^{\beta}_t \varphi=I^{\alpha+\beta}_t \varphi, \quad
\text{$(a.e.)$} \,\, t\leq T.
\end{equation}
 It is known that if $p>\frac{1}{\alpha}$ and $\alpha-\frac{1}{p}\notin\mathbb{N}$ then (see \cite[Theorem 3.6]{SKM})
 \begin{align}
                \label{eq:sobolev imbedding}
                \|I_{t}^{\alpha}\varphi\|_{C^{\alpha-\frac{1}{p}}([0,T])}\leq N(p,T,\alpha) \|\varphi\|_{L_{p}(0,T)}.
\end{align}
Let $\alpha\geq 0$, $\varphi \in C^{\alpha}([0,T])$, and $m$ be the maximal integer such that $m<\alpha$.
It is also known that,  for any $\beta\geq 0$ (see \cite[Theorem 3.2]{SKM})
\begin{equation}
     \label{eqn 8.30-1}
\left\|I_{t}^{\beta}\left(\varphi-\sum_{k=0}^m \frac{\varphi^{(k)}(0)}{k!}t^k\right)
\right\|_{\mathcal{C}^{\alpha+\beta}([0,T])}
\leq
N(\beta)\left\|\varphi-\sum_{k=0}^m \frac{\varphi^{(k)}(0)}{k!}t^k\right\|_{\mathcal{C}^{\alpha}([0,T])}
\end{equation}
if either $\alpha+\beta\notin\mathbb{N}$ or
$\alpha,\beta\in\mathbb{N}\cup\{0\}$.

Next we introduce the fractional derivative $D^{\alpha}_t$, which is (at least formally) the inverse operator of $I^{\alpha}_t$.
Let $\alpha\geq 0$ and $\lfloor \alpha \rfloor =n-1$ for some $n\in \mathbb{N}$. Then obviously
$$n-1\leq \alpha <n, \quad \quad  n-\alpha\in (0,1].
$$
For a function $\varphi(t)$ which is  $(n-1)$-times  differentiable and $\left(\frac{d}{dt}\right)^{n-1} I_t^{n-\alpha}  \varphi$ is absolutely continuous on $[0,T]$,
the Riemann-Liouville fractional derivative $D_{t}^{\alpha}$ and the Caputo fractional derivative $\partial_{t}^{\alpha}$ are defined as
\begin{equation}
                          \label{eqn 4.15}
D_{t}^{\alpha}\varphi:=\left(\frac{d}{dt}\right)^{n}\left(I_{t}^{n-\alpha}\varphi\right),
\end{equation}
and
$$
\partial^{\alpha}_t\varphi:=D^{\alpha-(n-1)}_t\left(\varphi^{(n-1)}(t)-\varphi^{(n-1)}(0)\right).
$$
By \eqref{eqn 4.15.3} and \eqref{eqn 4.15}, for any $\alpha,\beta\geq 0$,
\begin{align}
                \label{eqn 4.20.1}
D^{\alpha}_t I_{t}^{\beta} \varphi=\begin{cases}
D_{t}^{\alpha-\beta}\varphi & :\alpha>\beta\\
I_{t}^{\beta-\alpha}\varphi& :\alpha\leq\beta,
\end{cases}
\end{align}
and $D^{\alpha}_tD^{\beta}_t=D^{\alpha+\beta}_t$.
Using \eqref{eqn 4.15.3}-\eqref{eqn 4.20.1}, one can check
\begin{align}
         \label{eqn 8.30}
\partial_{t}^{\alpha}\varphi= D_{t}^{\alpha} \left(\varphi(t)-\sum_{k=0}^{n-1}\frac{t^{k}}{k!}\varphi^{(k)}(0)\right).
\end{align}
Thus if
$\varphi(0)=\varphi^{(1)}(0)=\cdots = \varphi^{(n-1)}(0)=0$ then $D^{\alpha}_t\varphi=\partial^{\alpha}_t \varphi$
and by (\ref{eqn 8.30}) and (\ref{eqn 8.30-1}),
\begin{equation}
							\label{extra con}
\|\partial^{\beta}_t \varphi\|_{C^{\alpha-\beta}([0,T])}
\leq \|I^{\lfloor \beta\rfloor+1-\beta}_t \varphi\|_{C^{\lfloor \beta\rfloor+1-\beta+\alpha}([0,T]}
\leq N \|\varphi\|_{C^{\alpha }([0,T])} \qquad \forall \beta \leq \alpha,
\end{equation}
where either $\alpha-\beta\notin\mathbb{N}$ or
$\alpha,\beta\in\mathbb{N}\cup\{0\}$.


\begin{remark}
 Banach space valued fractional calculus can be defined  as above  on the basis of Bochner's integral and Pettis's integral.
See e.g. \cite{agarwal2015fractional} and references therein.
\end{remark}


Let $(\Omega,\mathscr{F},P)$ be a complete probability
space and $\{\mathscr{F}_{t},t\geq0\}$ be an increasing filtration of
$\sigma$-fields $\mathscr{F}_{t}\subset\mathscr{F}$, each of which
contains all $(\mathscr{F},P)$-null sets. We assume that
an independent family of one-dimensional
Wiener processes $\{w_{t}^{k}\}_{k\in\mathbb{N}}$ relative to the
filtration $\{\mathscr{F}_{t},t\geq0\}$ is given on $\Omega$.
By $\cP$, we denote the predictable $\sigma$-field generated by $\mathscr{F}_{t}$, i.e.
$\cP$ is the smallest $\sigma$-field containing every set $A \times (s,t]$, where $s<t$ and $A \in \mathscr{F}_s$.

For $p\geq 2$ and $\gamma \in \bR$, let
$H_{p}^{\gamma}=H_{p}^{\gamma}(\mathbb{R}^{d})$ denote  the class of all
tempered distributions $u$  on $\mathbb{R}^{d}$ such that
\begin{equation}
        \label{eqn norm}
\| u\| _{H_{p}^{\gamma}}:=\|(1-\Delta)^{\gamma/2}u\|_{L_{p}}<\infty,
\end{equation}
where
$$
(1-\Delta)^{\gamma/2} u = \cF^{-1} \left((1+|\xi|^2)^{\gamma/2}\cF (u) \right).
$$
It is well-known that if $\gamma=1,2,\cdots$, then
$$
H^{\gamma}_p=W^{\gamma}_p:=\{u: D^{\mathfrak{a}}_x u\in L_p(\bR^d), \, \,\,|\mathfrak{a}|\leq \gamma\}, \quad \quad H^{-\gamma}_p=(H^{\gamma}_{p/{(p-1}})^*.
$$
For  a  tempered distribution $u\in H^{\gamma}_p$ and $\phi\in
\cS(\bR^d)$, the action of $u$ on $\phi$ (or the image of $\phi$
under $u$) is defined as
$$(u,\phi)=\left((1-\Delta)^{\gamma/2}u ,
(1-\Delta)^{-\gamma/2}\phi \right)=\int_{\bR^d}
(1-\Delta)^{\gamma/2}u \cdot (1-\Delta)^{-\gamma/2}\phi \,dx.
$$
Let  $l_2$ denote the set of all sequences $a=(a^1,a^2,\cdots)$ such that
$$|a|_{l_{2}}:=\left(\sum_{k=1}^{\infty}|a^{k}|^{2}\right)^{1/2}<\infty.
$$
By $H_{p}^{\gamma}(l_{2})=H_{p}^{\gamma}(\bR^d,l_2)$  we denote the class of all $l_2$-valued
tempered distributions $v=(v^1,v^2,\cdots)$ on $\mathbb{R}^{d}$ such that
$$
\|v\|_{H_{p}^{\gamma}(l_{2})}:=\||(1-\Delta)^{\gamma/2}v|_{l_2}\|_{L_{p}}<\infty.
$$

We introduce stochastic Banach spaces:
$$
\mathbb{H}_{p}^{\gamma}(T):=L_{p}\left(\Omega\times[0,T],\mathcal{P};H_{p}^{\gamma}\right),\quad\mathbb{L}_{p}(T)=\mathbb{H}_{p}^{0}(T)
$$
$$
\mathbb{H}_{p}^{\gamma}(T,l_{2}):=L_{p}\left(\Omega\times[0,T],\mathcal{P};H_{p}^{\gamma}(l_{2})\right),\quad\mathbb{L}_{p}(T,l_{2})=\mathbb{H}_{p}^{0}(T,l_{2}).
$$
For instance,  $u\in \bH^{\gamma}_p(T)$ if and only if $u$ is an $H^{\gamma}_p$-valued $\cP^{dP\times dt}$ -measurable process defined on $\Omega\times [0,T]$ such that
$$
\|u\|_{\bH^{\gamma}_p(T)}:=\left(\bE \int^T_0 \|u\|^p_{H^{\gamma}_p}dt \right)^{1/p}<\infty.
$$
Here $\cP^{dP \times dt}$ is the completion of $\cP$ w.r.t $dP\times dt$.   We write $g\in\mathbb{H}_{0}^{\infty}(T,l_{2})$ if $g^{k}=0$ for
all sufficiently large $k$, and each $g^{k}$ is of the type
$$
g^{k}(t,x)=\sum_{i=1}^{n}1_{(\tau_{i-1},\tau_{i}]}(t)g^{ik}(x),
$$
where $\tau_{i}\leq T$ are  stopping times with repect to $\mathscr{F}_{t}$
and $g^{ik}\in C_{c}^{\infty}(\mathbb{R}^{d})$. It is known \cite[Theorem 3.10]{Krylov1999}
that $\mathbb{H}_{0}^{\infty}(T,l_{2})$ is dense in $\mathbb{H}_{p}^{\gamma}(T,l_{2})$
for any $\gamma$.
We use $U_{p}^{\alpha,\gamma}$ to denote the family of $H_{p}^{\gamma+(2 -2/(\alpha p))_+}$-valued
$\mathscr{F}_{0}$-measurable random variables $u_{0}$ such that
$$
\|u_{0}\|_{U_{p}^{\alpha, \gamma}}:=\left(\mathbb{E}\|u_{0}\|_{H_{p}^{\gamma+(2 -2/(\alpha p))_+}}^{p}\right)^{1/p}<\infty,
$$
where $(2 -2/(\alpha p))_+ = \frac{|2 -2/(\alpha p)|+2 -2/(\alpha p)}{2}$.

\vspace{2mm}

(i) and (iii) of Lemma \ref{lem:s int} below are used e.g. when we apply $I^{\alpha}_t$ and $D^{\alpha}_t$ to the time-fractional SPDEs, and (ii) can be used in the  approximation arguments.

\begin{lem}
                \label{lem:s int}
\noindent
(i) Let $\alpha\geq 0$ and  $h\in L_{2}(\Omega\times[0,T],\mathcal{P};l_{2})$.
Then the equality
\begin{equation}
I^{\alpha}\left(\sum_{k=1}^{\infty}\int_{0}^{\cdot}h^{k}(s)dw_{s}^{k}\right)(t)=\sum_{k=1}^{\infty}\left(I^{\alpha}\int_{0}^{\cdot}h^{k}(s)dw_{s}^{k}\right)(t)\label{eq:equality s int}
\end{equation}
holds for all $t\leq T$ $(a.s.)$ and also in $L_{2}(\Omega\times[0,T])$, where the convergence of the series in both sides is understood in probability sense.

\noindent
(ii) Suppose $\alpha\geq 0$ and $h_{n} \to h$ in $L_{2}(\Omega\times[0,T],\mathcal{P} ; l_{2})$
as $n\rightarrow\infty$. Then
$$
\sum_{k=1}^{\infty}\left(I^{\alpha}\int_{0}^{\cdot}h_{n}^{k}dw_{s}^{k}\right)(t) \longrightarrow\sum_{k=1}^{\infty}\left(I^{\alpha}\int_{0}^{\cdot}h^{k}dw_{s}^{k}\right)(t)
$$
in probability uniformly on $[0,T]$.

\noindent
(iii)  If $\alpha>1/2$ and $L_{2}(\Omega\times[0,T],\mathcal{P} ; l_{2})$, then
\begin{align*}
\frac{\partial}{\partial t}\left(I^{\alpha}\sum_{k=1}^{\infty}\int_{0}^{\cdot}h^{k}(s)dw_{s}^{k}\right)(t)
&=\frac{1}{\Gamma(\alpha)}\sum_{k=1}^{\infty}\int_{0}^{t}(t-s)^{\alpha-1}h^{k}(s)dw_{s}^{k}
\end{align*}
$(a.e.)$ on $\Omega \times [0,T]$.
\end{lem}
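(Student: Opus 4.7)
For part (i), I would truncate the series at $k\leq n$ and use (\ref{eq:Lp continuity of I}). Set $M_n(t):=\sum_{k=1}^n\int_0^t h^k(s) dw^k_s$ and $M(t):=\sum_{k=1}^\infty \int_0^t h^k(s) dw^k_s$; by It\^o's isometry and Doob's inequality, $M_n\to M$ in $L_2(\Omega;C([0,T]))$. Linearity of $I^\alpha$ gives $I^\alpha M_n(t)=\sum_{k=1}^n I^\alpha\bigl(\int_0^\cdot h^k dw^k_s\bigr)(t)$ for every $t$ pathwise, and (\ref{eq:Lp continuity of I}) with $p=2$ together with Fubini yields $I^\alpha M_n\to I^\alpha M$ in $L_2(\Omega\times[0,T])$, settling the $L_2$ statement. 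For the a.s.\ pointwise-in-$t$ equality I would invoke the stochastic Fubini theorem on each summand to obtain
$$I^\alpha\Bigl(\int_0^\cdot h^k dw^k_s\Bigr)(t)=\frac{1}{\Gamma(\alpha+1)}\int_0^t(t-r)^\alpha h^k(r) dw^k_r,$$
after which, for each fixed $t$, the series on the right-hand side of (\ref{eq:equality s int}) converges in $L_2(\Omega)$ by It\^o's isometry and the limit must coincide a.s.\ with $I^\alpha M(t)$ by the $L_2(\Omega\times[0,T])$ convergence already established.

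For part (ii), I would invoke part (i) to rewrite $\sum_k I^\alpha(\int_0^\cdot h^k_n dw^k_s)(t)=I^\alpha M_n(t)$ and similarly for $h$. The assumption $h_n\to h$ in $L_2(\Omega\times[0,T];l_2)$ combined with Doob's inequality gives $M_n\to M$ in probability uniformly on $[0,T]$. The $p=\infty$ remark following (\ref{eq:Lp continuity of I}) then transfers this to $I^\alpha M_n\to I^\alpha M$ in probability uniformly on $[0,T]$, which is exactly the claim.

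For part (iii), using the stochastic Fubini identity from part (i), write
$$\Bigl(I^\alpha\sum_k \int_0^\cdot h^k dw^k_s\Bigr)(t)=\frac{1}{\Gamma(\alpha+1)}\sum_k \int_0^t(t-r)^\alpha h^k(r) dw^k_r,$$
and then use the identity $(t-r)^\alpha=\alpha\int_r^t(u-r)^{\alpha-1} du$ followed by a second application of stochastic Fubini to rewrite this as
$$\frac{1}{\Gamma(\alpha)}\int_0^t\Bigl(\sum_k\int_0^u(u-r)^{\alpha-1} h^k(r) dw^k_r\Bigr) du.$$
This exhibits $I^\alpha\sum_k\int_0^\cdot h^k dw^k_s$ as the indefinite Lebesgue integral of an $L_2(\Omega\times[0,T])$ process, so it is absolutely continuous in $t$ and its a.e.\ derivative coincides with that integrand, giving the stated formula.

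The main obstacle is rigorously justifying the two stochastic Fubini swaps, especially the second one. It works precisely because $\alpha>1/2$ forces the singular kernel $(u-r)^{\alpha-1}$ to be square-integrable on $(0,T)$; a direct second-moment computation using $\int_0^u(u-r)^{2\alpha-2} dr=\frac{u^{2\alpha-1}}{2\alpha-1}$ verifies the integrability hypothesis needed to swap the $dw^k$ integrals with $du$ and with $\sum_k$, and simultaneously clarifies why $\alpha>1/2$ is the sharp threshold for (iii).
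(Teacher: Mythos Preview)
The paper does not give its own proof here; it simply refers to Lemmas 3.1 and 3.3 of \cite{CKK}. Your direct argument is essentially along the right lines and would reconstruct what is in that reference, so there is no real divergence in approach to comment on.

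There is one small gap worth flagging in your treatment of part (i). Your stochastic-Fubini detour gives, for each \emph{fixed} $t$, that the right-hand side of \eqref{eq:equality s int} converges in $L_2(\Omega)$ to something, and you then claim this limit agrees with $I^\alpha M(t)$ ``by the $L_2(\Omega\times[0,T])$ convergence already established.'' But $L_2(\Omega\times[0,T])$ convergence only gives agreement for a.e.\ $t$, not for an arbitrary fixed $t$, and certainly not the uniform statement ``for all $t\le T$ (a.s.)'' that the lemma asserts. The clean fix is already implicit in your first sentence: from $M_n\to M$ in $L_2(\Omega;C([0,T]))$ and the $p=\infty$ case of \eqref{eq:Lp continuity of I} you get $I^\alpha M_n\to I^\alpha M$ in $L_2(\Omega;C([0,T]))$, hence in probability uniformly on $[0,T]$. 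Since $I^\alpha M_n(t)=\sum_{k=1}^n I^\alpha\bigl(\int_0^\cdot h^k\,dw^k_s\bigr)(t)$ pathwise for every $t$, this single step simultaneously yields the $L_2(\Omega\times[0,T])$ statement and the ``for all $t$ (a.s.)'' statement, with no need for stochastic Fubini in part (i). Your arguments for (ii) and (iii) are fine; in particular, your explanation of why $\alpha>1/2$ is exactly the threshold for the second Fubini swap in (iii) is correct.
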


\begin{proof}
See   Lemmas 3.1 and 3.3 of \cite{CKK}.
\end{proof}

\begin{rem}
                        \label{rem:s int}
By \cite[Remark 3.2]{Krylov1999}, for any $g\in \mathbb{H}^{\gamma}_p(T,l_2)$ and $\phi\in C^{\infty}_c(\fR^d)$
\begin{equation}
                                                    \label{eqn 5.21.1}
\bE \left[\sum_k \int^T_0 (g^k,\phi)^2 ds \right]\leq N(p,\phi) \|g\|^{2}_{\bH^{\gamma}_p(T,l_2)}.
\end{equation}
Thus if  $g_{n}\rightarrow g$
in $\mathbb{H}_{p}^{\gamma}(T,l_{2})$, then  $(g_n,\phi)\to (g,\phi)$ in $L_{2}(\Omega\times[0,T],\mathcal{P};l_{2})$.
Therefore, one can apply  Lemma \ref{lem:s int} $(ii)$  with $h_{n}(t)=\left(g_{n}(t,\cdot),\phi\right)$
and $h(t)=\left(g(t,\cdot),\phi\right)$.
\end{rem}

\vspace{3mm}

Let $\alpha\in(0,2)$, $\beta< \alpha+\frac{1}{2} $  and set
$$
\Lambda:=\max(\lceil \alpha \rceil , \lceil \beta \rceil).
$$
\begin{defn}
                    \label{def:solution space}
Define
$$
\cH^{\gamma}_p(T):=\bH^{\gamma+2}_p(T) \cap \left\{u: I^{\Lambda-\alpha}u \in L_p(\Omega;C([0,T]; H^{\gamma}_p))\right\},
$$
that is,
$u \in \mathcal{H}_{p}^{\gamma+2}(T)$ iff $u\in\mathbb{H}_{p}^{\gamma+2}(T)$ and $I^{\Lambda-\alpha}u$   has a $H^\gamma_p$-valued continuous version $\bI^{\Lambda-\alpha}_tu$.
The norm in $\cH^{\gamma+2}_p(T)$ is defined as
\begin{align*}
\|u\|_{\mathcal{H}_{p}^{\gamma+2}(T)}
&:=\|u\|_{\mathbb{H}_{p}^{\gamma+2}(T)}+\left(\mathbb{E}\sup_{t\leq T} \|\bI^{\Lambda-\alpha}u(t,\cdot)\|_{H_{p}^{\gamma}}^{p}\right)^{1/p}.
\end{align*}
\end{defn}

\begin{defn}
                  \label{defn 1}
Let $u \in \mathcal{H}_{p}^{\gamma_1+2}(T)$, $f\in\mathbb{H}_{p}^{\gamma_2}(T)$,
 $g\in\mathbb{H}_{p}^{\gamma_3}(T,l_{2})$, $u_0\in U_{p}^{\alpha,
 \gamma_4}$, and
 $v_0\in U_{p}^{\alpha-1,\gamma_4}$
 for some $\gamma_i\in \bR$ ($i=1,2,3,4$).
We say that $u$ satisfies
\begin{equation}
             \label{eqn 7.15}
\partial_{t}^{\alpha}u(t,x)=f(t,x)+\partial_{t}^{\beta}\int_{0}^{t}g^{k}(s,x)dw_{s}^{k}, \quad t\in (0,T],
\end{equation}
$$
u(0,\cdot)=u_0, \quad \quad \partial_tu(0,\cdot)=v_0~(\text{if}~ \alpha >1)
$$
if
for any $\phi\in \cS(\bR^d)$ the equality
\begin{eqnarray}
        \nonumber
&&(\bI^{\Lambda-\alpha}_tu(t)-I^{\Lambda-\alpha}_t( u_0+tv_01_{\alpha>1}), \phi)  \\
&&=I_{t}^{\Lambda}\left(f(t,\cdot),\phi\right)+\sum_{k=1}^{\infty}I_{t}^{\Lambda-\beta}\int_{0}^{t}\left(g^{k}(s,\cdot),\phi\right)dw_{s}^{k}
                 \label{eq:solution space 2}
 \end{eqnarray}
holds for all $t\in[0,T]$ $(a.s.)$ (see Remark \ref{rem 7.15} for an equivalent  version of (\ref{eq:solution space 2})).
In this case we   say  (\ref{eqn 7.15}) holds in the sense of distributions.
We say $u$ (or (\ref{eqn 7.15})) has zero initial condition
if  (\ref{eq:solution space 2}) holds with $u_0=v_0=0$.

\end{defn}

Below we discuss how the space   $U^{\alpha,\gamma}_p$  is chosen and show why  (\ref{eq:solution space 2}) is an appropriate interpretation of (\ref{eqn 7.15}).

\begin{rem}
            \label{remark 8.17}
 In this article we always assume $u(0)=1_{\alpha>1}\partial_t u(0)=0$. The space $U^{\alpha,\gamma}_p$ is defined for later use. It turns out that for the solution to the equation
$$
\partial^{\alpha}_tu=\Delta u, \quad t>0 \, ; \quad u(0,\cdot)=u_0, \quad  1_{\alpha>1}\partial_t u(0,\cdot)=1_{\alpha >1} v_0,
$$
we have, for any  $\gamma \in \bR$ and $\kappa>0$,
$$
\|u\|_{L_p((0,T),H^{\gamma+2}_p)}\leq N \left(\|u_0\|_{U^{\alpha,\gamma'}_p}+1_{\alpha >1}\|v_0\|_{U^{\alpha-1,\gamma'}_p}\right),
$$
where $\gamma'=\gamma+\kappa 1_{\beta=1/2}$.

\end{rem}

\begin{rem}
   If $\alpha=\beta=1$ then $\Lambda=1$ and (\ref{eq:solution space 2}) coincides with classical definition of the weak solution  \cite[Definition 3.1]{Krylov1999}.
   \end{rem}

   \begin{rem}
      \label{rem 7.15}
      (i) Let $u$, $f,g,u_0$, and $v_0$ be given as in Definition \ref{defn 1}. We claim that (\ref{eq:solution space 2}) holds for all $t\leq T$ (a.s.) if and only if the equality
\begin{equation}
            \label{eqn 7.15.2}
\left(u(t)-u_0-t\partial_tv_01_{\alpha >1},\phi\right)  =I_{t}^{\alpha}\left(f(t),\phi\right)+\sum_{k=1}^{\infty}I_{t}^{\alpha-\beta}\int_{0}^{t}\left(g^{k}(s),\phi\right)dw_{s}^{k}
 \end{equation}
holds for almost all  $t\leq T$ (a.s.).   Indeed,  applying $D^{\Lambda-\alpha}_t$ to (\ref{eq:solution space 2}) and using (\ref{eqn 4.20.1}),
we  get  equality (\ref{eqn 7.15.2}) for almost all $t\leq T$ $(a.s.)$. Here $I^{\alpha-\beta}_t:=D^{\beta-\alpha}_t$ if $\alpha\leq \beta$.
Note that  if $\alpha\leq \beta$, the last term of  (\ref{eqn 7.15.2}) makes sense due to Lemma \ref{lem:s int}(iii) and the assumption $\beta-\alpha<1/2$.
For the other direction,  we apply $I_t^{\Lambda-\alpha}$ to (\ref{eqn 7.15.2}) and get
(\ref{eq:solution space 2}) for all $t\leq T$ $(a.s.)$. This is because  $(\bI^{\Lambda-\alpha}_tu,\phi)$ is continuous in $t$ by the assumption $u\in \cH^{\gamma_1+2}_p(T)$.

  Also, taking $D^{\alpha}_t$ to  (\ref{eqn 7.15.2}), we formally get a distributional version of (\ref{eqn 7.15}):
$$
(\partial^{\alpha}_t u,\phi)=(f(t),\phi)+\partial^{\beta}_t\int^t_0 (g^k,\phi)dw^k_t, \quad  \text{$(a.e.)$}\, t\leq T.
$$

(ii) Let $\beta<1/2$ and $u(0)=1_{\alpha>1}u'(0)=0$. Denote
$$
\bar{f}(t)=\frac{1}{\Gamma(1-\beta)}\sum_k \int^t_0(t-s)^{-\beta}g^k(s)dw^k_s.
$$
Then from (\ref{eqn 7.15.2}) and Lemma \ref{lem:s int}(iii) it follows that the equality
$$
\left(u(t),\phi\right)  =I_{t}^{\alpha}\left(f(t)+\bar{f}(t),\phi\right)
$$
holds for almost all $t\leq T$ (a.s.). Therefore (\ref{eq:solution space 2}) holds for all $t\leq T$ (a.s.) with    $f+\bar{f}$ and $0$ in place of $f$ and $g$, respectively.

\end{rem}

To use some deterministic results later in this article we  show  our intepretation of (\ref{eqn 7.15}) coincides with the one in \cite{KKL2014, zacher2005maximal, Za}.
In the following remark $u$ is not random and  $\gamma_1=\gamma_2=\gamma$.

\begin{rem}
                  \label{rem deterministic}

     Denote $\fH^{\gamma+2}_p(T)=L_p([0,T] ; H^{\gamma+2}_p)$ and $\fL_p(T)=\fH^{0}_p(T)$.
     We dnoete by $\fH^{\alpha,\gamma+2}_{p,0}(T)$ the completion of
     $ C^{\infty}_c((0,\infty)\times \bR^d) $
    with the norm
$$\|\cdot\|_{\fH^{\alpha,\gamma+2}_p(T)}:=\|\cdot\|_{\fH^{\gamma+2}_p(T)}+\|\partial^{\alpha}_t \cdot\|_{\fH^{\gamma}_p(T)}.
$$
     That is, $u\in \fH^{\alpha,\gamma+2}_{p,0}(T)$ if and only if there exists a sequence $u_n \in C^{\infty}_c((0,\infty)\times \bR^d)$ such that
     $\|u_n -u\|_{\fH^{\gamma+2}_p(T)}\to 0$ and
     $f_n:=\partial^{\alpha}_tu_n$ is a Cauchy sequence in $\fH^{\gamma}_p(T)$, whose limit is defined as $\partial^{\alpha}_t u$.

     \vspace{2mm}

    The following two statements are equivalent:

     \vspace{1mm}

     $\bullet$  $u\in \fH^{\alpha,\gamma+2}_{p,0}(T)$ and $\partial^{\alpha}_tu=f$ in $\fH^{\gamma}_p(T)$.

     \vspace{3mm}

     $\bullet$  $u\in \cH^{\gamma+2}_{p}(T)$, $f\in \bH^{\gamma}_p(T)$, and  $u$ satisfies $\partial^{\alpha}_tu=f$ with  zero initial condition
     in the sense of Definition \ref{def:solution space}.

     \vspace{2mm}

     First, let $u\in \fH^{\alpha,\gamma+2}_{p,0}(T)$ and $\partial^{\alpha}_tu=f$ in $\fH^{\gamma}_p(T)$. Take $u_n$ and $f_n$ as above.
     Then since $u_n, f_n \in C([0,T];H^{\gamma}_p)$, we have
     \begin{equation*}
u_n(t)=I_{t}^{\alpha}f_n(t), \quad \forall \,t\leq T,
 \end{equation*}
     and letting $n\to \infty$  we conclude
\begin{equation}
                    \label{det sol rel}
u(t)=I_{t}^{\alpha}f(t), \quad (a.e.)\, \, t\leq T.
 \end{equation}
Taking $I^{\Lambda -\alpha}$ to both sides of (\ref{det sol rel}) and recalling $\Lambda \geq 1$, one easily finds that $I^{\Lambda - \alpha}u$ has an $H^{\gamma}_p$-valued continuous version.
Therefore, by Remark \ref{rem 7.15}, $u\in \cH^{\gamma+2}_{p}(T)$ and it satisfies $\partial^{\alpha}_tu=f$ with the zero initial condition in the sense of Definition of  \ref{def:solution space}.

       Next, let $u\in \cH^{\gamma+2}_{p}(T)$ satisfy $\partial^{\alpha}_tu=f$ in the sense of Definition of  \ref{def:solution space}
       with zero initial condition.
       Then by  (\ref{eqn 7.15.2}),
       $$
       u(t)=I^{\alpha}_t f(t) \quad \text{in}\quad H^{\gamma}_p(\bR^d), \quad \text{$(a.e.)$}~\,t\in [0,T].
       $$
       Extend $u$ so that $u(t)=0$ for $t<0$. Take  $\eta\in C^{\infty}_c((1,2))$ with the unit integral, and denote $\eta_{\varepsilon}(t)=\varepsilon^{-1}\eta(t/{\varepsilon})$,
       $$
       u^{\varepsilon}(t):=u \star \eta_{\varepsilon}(t) := \int_{\bR} u(s)\eta_\varepsilon(t-s)ds=\int^t_0 u(s)\eta_{\varepsilon}(t-s)ds,
       $$
        and $f^{\varepsilon}:=f \star \eta_{\varepsilon}$.
        Note $u^{\varepsilon}(t)=0$ for $t<\varepsilon$, and thus $u^{\varepsilon}\in  C^n([0,T];H^{\gamma}_p)$ for any $n$.
         Multiplying by a smooth function which equals one for $t\leq T$ and vanishes for $t>T+1$, we may assume $u^{\varepsilon}\in  C^{\infty}_c((0,\infty);H^{\gamma}_p)$.          Obviously
         $\partial^{\alpha}_tu^{\varepsilon}=f^{\varepsilon}$ in $\fH^{\gamma}_p(T)$,
        $\|u^{\varepsilon}-u\|_{\fH^{\gamma+2}_p(T)}\to 0$ and $\|f^{\varepsilon}-f\|_{\fH^{\gamma}_p(T)}\to 0$ as $\varepsilon \downarrow 0$.
        Next choose a smooth function $\zeta(x)\in C^{\infty}_c(B_1(0))$ with unit integral, and denote $u^{\varepsilon,\delta}(t,x)=u^{\varepsilon}* \delta^{-d}\zeta(\cdot/{\delta})=\delta^{-d}\int_{\bR^d}u^{\varepsilon}(t,y)\zeta((x-y)/{\delta})dy$ and define $f^{\varepsilon,\delta}$ similarly.  Then we still have $\partial^{\alpha}_tu^{\varepsilon,\delta}=f^{\varepsilon,\delta}$.
        For any $\varepsilon'>0$, choose $\varepsilon$ and $\delta$ so that
        $\|u^{\varepsilon,\delta}-u^{\varepsilon}\|_{\fH^{\gamma+2}_p(T)}+
        \|\partial^{\alpha}_t(u^{\varepsilon,\delta}-u^{\varepsilon})\|_{\fH^{\gamma}_p(T)}\leq \varepsilon'$.
        After this, multiplying by appropriate smooth cut-off functions of $x$, we can approximate $u^{\varepsilon,\delta}$ and $f^{\varepsilon,\delta}$
        with functions in
        $C^{\infty}_c((0,\infty)\times \bR^d)$, and  therefore  we may assume
         $u^{\varepsilon,\delta}, f^{\varepsilon,\delta} \in C^{\infty}_c((0,\infty)\times \bR^d)$.     Thus  it follows that
      $u\in \fH^{\alpha,\gamma+2}_{p,0}(T)$ and it satisfies $\partial^{\alpha}_tu=f$ as the limit in $\fH^{\gamma}_p(T)$.
          \end{rem}

\begin{thm}
                    \label{lem:solution space}

(i) For any $\gamma, \nu\in \bR$, the map $(1-\Delta)^{\nu/2}:\cH^{\gamma+2}_p \to \cH^{\gamma-\nu+2}_p(T)$ is an isometry.

\vspace{1mm}

\noindent
(ii) Let $u \in \cH_p^{\gamma+2}(T)$ satisfy (\ref{eqn 7.15}).
Then
\begin{align}
                    \notag
\mathbb{E}\sup_{t\leq T} \|\bI^{\Lambda-\alpha}u(t,\cdot)\|_{H_{p}^{\gamma}}^{p}
&\leq N\big(\bE \|u(0)\|^p_{H^{\gamma}_{p}}+1_{\alpha >1}\bE \|\partial_t u(0)\|^p_{H^{\gamma}_p}\\
                    \label{eq:7.2.1}
&\quad\quad+\|f\|_{\mathbb{H}_{p}^{\gamma}(T)} +\|g\|_{\mathbb{H}_{p}^{\gamma}(T,l_{2})} \big),
\end{align}
where $N=N(d,p,T)$.
\vspace{1mm}

\noindent
(iii)
$\mathcal{H}_{p}^{\gamma+2}(T)$ is a Banach space,

\noindent
(iv) Let   $\theta:=\min\{1,\alpha,2(\alpha-\beta)+1\}$. Then
there exists  a constant $N=N(d,\alpha,\beta,p,T)$ so that for any
$t\leq T$ and $u\in\mathcal{H}_{p}^{\gamma+2}(T)$ satisfying (\ref{eqn 7.15}) with the zero initial condition,
\begin{equation}
\left\Vert u\right\Vert _{\mathbb{H}_{p}^{\gamma}(t)}^{p}
\leq N\int_{0}^{t}(t-s)^{\theta-1} \left(\|f\|^p_{\mathbb{H}_{p}^{\gamma}(s)}
+\|g\|^p_{\mathbb{H}_{p}^{\gamma}(s,l_{2})} \right)ds.
\label{eq:solution space estimate 1}
\end{equation}

\end{thm}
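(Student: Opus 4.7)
The plan is to treat (i) and (iii) as essentially formal, and to derive (ii) and (iv) from the equation in Definition \ref{defn 1} via explicit representations of $\bI^{\Lambda-\alpha}u$ and $u$. For (i), observe that $(1-\Delta)^{\nu/2}$ is a surjective isometry $H^\mu_p \to H^{\mu-\nu}_p$ for every $\mu$ and commutes with the time integrals $I^\sigma_t$ and with the stochastic integral (viewed through pairing with $\cS(\bR^d)$); hence $(1-\Delta)^{\nu/2}\cH^{\gamma+2}_p(T) = \cH^{\gamma-\nu+2}_p(T)$ isometrically. For (iii), I would take a Cauchy sequence $\{u_n\}$ in $\cH^{\gamma+2}_p(T)$: completeness of $\bH^{\gamma+2}_p(T)$ delivers a limit $u$ there, and completeness of $L_p(\Omega;C([0,T];H^\gamma_p))$ delivers a limit $v$ of $\bI^{\Lambda-\alpha}u_n$; inequality \eqref{eq:Lp continuity of I} forces $I^{\Lambda-\alpha}u_n \to I^{\Lambda-\alpha}u$ in $\bH^\gamma_p(T)$, so $v$ is the required continuous version of $I^{\Lambda-\alpha}u$.

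For (ii), after reducing to $\gamma=0$ via (i), identity (\ref{eq:solution space 2}) gives
$\bI^{\Lambda-\alpha}u(t) = I^{\Lambda-\alpha}_tu_0 + I^{\Lambda-\alpha}_t(tv_0)\mathbf{1}_{\alpha>1} + I^\Lambda_tf(t) + I^{\Lambda-\beta}_tM(t)$
with $M(t):=\sum_k\int_0^tg^k(s)dw^k_s$. The initial-data terms are bounded by $C(T)\|u_0\|_{L_p}$ and $C(T)\|v_0\|_{L_p}$. Since $\Lambda\geq 1$ the kernel $(t-s)^{\Lambda-1}$ is bounded, and H\"older in time gives $\sup_t\|I^\Lambda_tf(t)\|_{L_p}\leq C(T,p)\|f\|_{\bL_p(T)}$. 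For the stochastic term, $\Lambda\geq\beta$: when $\Lambda>\beta$ the convolution $I^{\Lambda-\beta}_t$ is uniformly bounded in $t$, yielding $\sup_t\|I^{\Lambda-\beta}_tM(t)\|_{L_p}\leq C(T)\sup_t\|M(t)\|_{L_p}$; when $\Lambda=\beta$ it is the identity. In either case BDG applied pointwise in $x$ and Jensen in time (using $p\geq 2$) give $\bE\sup_t\|M(t)\|^p_{L_p}\leq C(T,p)\|g\|^p_{\bL_p(T,l_2)}$, completing \eqref{eq:7.2.1}.

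For (iv), again reducing to $\gamma=0$ and invoking Remark \ref{rem 7.15} under zero initial data, $u(t) = I^\alpha_tf(t) + u_g(t)$ with $u_g(t):=I^{\alpha-\beta}_tM(t)$. A stochastic-Fubini step---Lemma \ref{lem:s int}(i) if $\alpha>\beta$, Lemma \ref{lem:s int}(iii) if $\alpha\leq\beta$ (using $\beta-\alpha<\tfrac12$)---rewrites $u_g(t,x)=c_{\alpha,\beta}\sum_k\int_0^t(t-s)^{\alpha-\beta}g^k(s,x)dw^k_s$. H\"older in the $I^\alpha_t$ convolution produces $\bE\|I^\alpha_tf(t)\|^p_{L_p}\leq C\int_0^t(t-s)^{\alpha-1}\bE\|f(s)\|^p_{L_p}ds$, and BDG pointwise in $x$ followed by H\"older in time (which needs $2(\alpha-\beta)>-1$) gives $\bE\|u_g(t)\|^p_{L_p}\leq C\int_0^t(t-s)^{2(\alpha-\beta)}\bE\|g(s)\|^p_{L_p(l_2)}ds$. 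Integrating once more in $t$ and Fubini upgrade the kernels to $(t-s)^\alpha$ and $(t-s)^{2(\alpha-\beta)+1}$; both are bounded by $C_T(t-s)^\theta$ with $\theta=\min\{1,\alpha,2(\alpha-\beta)+1\}$, and a final Fubini identifies the result with $\theta^{-1}\int_0^t(t-s)^{\theta-1}\|\cdot\|^p_{\bH^\gamma_p(s)}ds$, yielding \eqref{eq:solution space estimate 1}.

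The main obstacle will be the stochastic-Fubini step in (iv) when $\alpha<\beta$: there $I^{\alpha-\beta}_t=D^{\beta-\alpha}_t=\partial_tI^{1-(\beta-\alpha)}_t$, and commuting this with the infinite It\^o sum depends delicately on Lemma \ref{lem:s int}(iii) together with the sharp bound $\beta-\alpha<\tfrac12$. A secondary subtlety is applying BDG in a form that lets the spatial $L_p$-norm and expectation commute with the quadratic-variation estimate, which I would do pointwise in $x$ and then use Jensen (with $p\geq 2$) to convert the $\bigl(\int|g|^2_{l_2}ds\bigr)^{p/2}$ factor into $\int(t-s)^{2(\alpha-\beta)}|g|^p_{l_2}ds$.
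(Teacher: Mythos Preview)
Your proposal is correct and follows essentially the same route as the paper: (i) and (iii) are formal, and (ii), (iv) come from the representation in Definition~\ref{defn 1} (resp.\ Remark~\ref{rem 7.15}), with H\"older on the deterministic convolution $I^\alpha_t f$ and BDG plus H\"older on the stochastic piece, then the kernel comparison $(t-s)^{\alpha-1}+(t-s)^{2(\alpha-\beta)}\le N(t-s)^{\theta-1}$.

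The one device you leave implicit, and which the paper makes explicit, is spatial mollification. Identity \eqref{eq:solution space 2} is only given after pairing with $\phi\in\cS$, and Lemma~\ref{lem:s int} is stated for scalar-valued $h\in L_2(\Omega\times[0,T];l_2)$; neither directly justifies ``BDG pointwise in $x$'' for a general $g\in\bL_p(T,l_2)$. The paper resolves exactly the subtlety you flag by plugging $\phi=\zeta_\varepsilon(\cdot-x)$ into \eqref{eq:solution space 2} to obtain a genuine pointwise-in-$x$ equation for $(\bI^{\Lambda-\alpha}u)^{(\varepsilon)}$ (and, in (iv), for $u^{(\varepsilon)}$), applies BDG and the stochastic Fubini step for each fixed $x$ to the smooth $g^{(\varepsilon)k}(s,x)$, obtains the estimates \eqref{b eqn 1}--\eqref{b eqn 2} uniformly in $\varepsilon$, and then passes to the limit: via a Cauchy argument in $L_p(\Omega;C([0,T];L_p))$ for (ii), and via Fatou's lemma for (iv). Adding this mollification-and-limit layer to your outline makes it match the paper's proof essentially line by line.
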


\begin{proof}

(i) For any $u \in \cH_p^{\gamma+2}(T)$,
$(1-\Delta)^{\nu/2}\bI^{\Lambda - \alpha}_tu$
is an $H_p^{\gamma-\nu+2}$-valued continuos version of $(1-\Delta)^{\nu/2}I_t^{\Lambda -\alpha}u$.
Thus it is obvious.

(ii) Due to (i), we may assume that $\gamma=0$. Take a nonnegative
function $\zeta\in C_{c}^{\infty}(\bR^d)$ with unit integral. For
$\varepsilon>0$ define
$\zeta_{\varepsilon}(x)=\varepsilon^{-d}\zeta(x/\varepsilon)$, and
for tempered distributions $v$ on $\bR^d$ put
$v^{(\varepsilon)}(x):=v\ast\zeta_{\varepsilon}(x)$.
Note that for each $t \in (0,T)$, $u^{(\varepsilon)}(t,x)$ is an
infinitely differentiable function of $x$.
By plugging
$\zeta_{\varepsilon}(\cdot-x)$  in (\ref{eq:solution space 2}) in place of $\phi$,  for any $x$
\begin{align}
                \label{eq:7.3.1}
(\bI^{\Lambda-\alpha}u)^{(\varepsilon)} (t,x)
 = I_{t}^{\Lambda}f^{(\varepsilon)}(t,x)
 +I_{t}^{\Lambda-\beta}\int_{0}^{t}g^{(\varepsilon)k}(s,x)dw_{s}^{k},
 \quad \forall \,t\leq T\, \text{$(a.s.)$}.
\end{align}
 Observe that
\begin{align}
                    \label{b eqn 1}
\mathbb{E}\sup_{t\leq T}
\left\|  I_{t}^{\Lambda}f^{(\varepsilon)} (t,\cdot)  \right\|_{p}^{p}
 \leq N\mathbb{E}\int_{0}^{T}\|f^{(\varepsilon)}(s,\cdot)\|_{p}^{p}\,ds.
\end{align}
Also,
by (\ref{eq:Lp continuity of I}), the Burkholder-Davis-Gundy inequality, and the H\"older inequality,
\begin{align}
                    \notag
\mathbb{E}\sup_{t\leq T}\left\|  I_{t}^{\Lambda-\beta} \sum_k \int_{0}^{t}g^{(\varepsilon)k}(s,\cdot)dw_{s}^{k} \right\|_{p}^{p}
& \leq   N \int_{\bR^d} \mathbb{E} \sup_{t\leq T} \left| \sum_k \int_{0}^{t}g^{(\varepsilon)k}(s,x)dw_{s}^{k}\right|^p dx \\
                    \label{b eqn 2}
& \leq N \mathbb{E} \int_{0}^{T} \|g^{(\varepsilon)}(s,\cdot)\|_{L_{p}(l_{2})}^{p}ds.
\end{align}
Thus from \eqref{eq:7.3.1},
\begin{align}
                \notag
\mathbb{E}\sup_{t\leq T} \left\|(\bI_{t}^{\Lambda-\alpha}u)^{(\varepsilon)}(t,\cdot) \right\|_{p}^{p} &\leq N(\|f^{(\varepsilon)}\|^p_{\bL_p(T)}+
\|g^{(\varepsilon)}\|^p_{\bL_p(T,l_2)} )               \notag \\
&\leq N (\|f\|^p_{\bL_p(T)}+
\|g\|^p_{\bL_p(T,l_2)} ).
                \label{eq:7.3.2}
\end{align}
By considering $(\bI_{t}^{\Lambda-\alpha}u)^{(\varepsilon)}-(\bI_{t}^{\Lambda-\alpha}u)^{(\varepsilon')} $ instead of $(\bI_{t}^{\Lambda-\alpha}u)^{(\varepsilon)}$,
we easily see that $(\bI_{t}^{\Lambda-\alpha}u)^{(\varepsilon)}$ is a Cauchy sequence in $L_{p}(\Omega ; C([0,T];L_{p}))$.
Let $\bar{u}$ be the limit in this space.
Then since $(\bI^{\Lambda-\alpha}_tu)^{(\varepsilon)}$ converges to $\bI^{\Lambda-\alpha}u$ in $\bL_p(T)$, we conclude $\bar{u}=\bI^{\Lambda-\alpha}u$, and
 get (\ref{eq:7.2.1}) by  considering  the  limit of  (\ref{eq:7.3.2}) as $\varepsilon \to 0$ in the space $L_{p}(\Omega ; C([0,T];L_{p}))$.

(iii) By (\ref{eq:Lp continuity of I}),
$I_t^{\Lambda - \alpha}u_n$ converges to $I_t^{\Lambda - \alpha}u$ in $\bH_p^{\gamma+2}(T)$
if $u_n$ converges to  $u$ in $\bH_p^{\gamma+2}(T)$.
Moreover, both $\bH_p^{\gamma+2}(T)$ and $L_{p}(\Omega ; C([0,T];H^\gamma_{p}))$ are Banach spaces.
Therefore, $\cH_p^{\gamma+2}(T)$ is a Banach space.

(iv) As in the proof of (ii), we only consider the case $\gamma=0$.  By (\ref{eqn 7.15.2}), for each $x \in \bR^d$, $(a.s.)$
$$
u^{(\varepsilon)}(t,x)=I^{\alpha}_t f^{(\varepsilon)}(t,x)+I^{\alpha-\beta}_t\int^t_0 g^{(\varepsilon)k}(s,x) dw^k_s \quad (a.e.)~ t \in [0,T].
$$
Note
\begin{align*}
\|I^{\alpha}_t f^{(\varepsilon)}\|_{\mathbb{L}_{p}(t)}^{p}
\leq N I_{t}^{\alpha}\|f^{(\varepsilon)}\|_{\mathbb{L}_{p}(\cdot)}^{p} (t)
\leq N I_{t}^{\alpha}\|f\|_{\mathbb{L}_{p}(\cdot)}^{p}(t) \quad \forall \,t\in[0,T].
\end{align*}
By Lemma \ref{lem:s int} and the stochastic Fubini theorem (note if $\alpha < \beta$ then we define $I^{\alpha-\beta}_t=\frac{\partial}{\partial t}I_t^{\alpha+1-\beta}$), for each $x$ $(a.s.)$
$$
v^{\varepsilon}(t,x):=I^{\alpha-\beta}_t\int^t_0 g^{(\varepsilon)k} (s,x) dw^k_s
=c(\alpha,\beta)\int^t_0(t-s)^{\alpha-\beta}g^{(\varepsilon)k}(s.x) dw^k_s
$$
for almost all $t \in [0,T]$.
Thus by the Burkholder-Davis-Gundy inequality and
the H\"older inequality, for any $t\leq T$,
\begin{align*}
\|v^\varepsilon\|_{\mathbb{L}_{p}(t)}^{p}
&\leq N\mathbb{E}\int_{0}^{t}\int_{\mathbb{R}^{d}}\left(I_{s}^{2(\alpha-\beta)+1}\left(|g^{(\varepsilon)}|_{l_{2}}^{2}(\cdot,x)\right)(s) \right)^{p/2}dxds\\
 & \leq NI_{t}^{2(\alpha-\beta)+1}\left(\|g\|_{\mathbb{L}_{p}(\cdot,l_{2})}^{p}\right)(t).
\end{align*}
Observe that for $s\leq t\leq T$,
$$
(t-s)^{\alpha-1}+(t-s)^{2(\alpha-\beta)}\leq N(t-s)^{\theta-1}
$$
where $N$ depends on $\alpha,\beta$ and $T$. Thus, for any $t\leq T$
\begin{align*}
\|u^{(\varepsilon)}\|_{\mathbb{L}_{p}(t)}^{p}
& \leq NI_{t}^{\alpha}\left(\|f\|_{\mathbb{L}_{p}(\cdot)}^{p}\right)(t)+NI_{t}^{2(\alpha-\beta)+1}\left(\|g\|_{\mathbb{L}_{p}(\cdot,l_{2})}^{p}\right)(t)\\
 & \leq NI_{t}^{\theta}\left(\|f\|_{\mathbb{L}_{p}(\cdot)}^{p}+\|g\|_{\mathbb{L}_{p}(\cdot,l_{2})}^{p}\right)(t).
 \end{align*}
The claim of (iv) follows from   Fatou's lemma.

\end{proof}

Assumption \ref{assu:common} below will be used for both divergence type and non-divergence type equations. As mentioned before, the argument $\omega$ is omitted for functions depending on $(\omega,t,x)$.

\begin{assumption}
                        \label{assu:common}

$(i)$ The coefficients $a^{ij}$, $b^{i}$, $c$, $\sigma^{ijk}$, $\mu^{ik}$, $\nu^{k}$ are
$\mathcal{P}\otimes\mathcal{B}(\mathbb{R}^{d})$-measurable.

\noindent $(ii)$ The leading coefficients $a^{ij}$ are continuous in
$x$ and piecewise continuous in $t$ in the following sense: there
exist stopping times $0=\tau_{0}<\tau_{1}<\tau_{2}<\cdots<\tau_{M_0}
= T$ such that
\begin{equation}
               \label{cond 8.26}
a^{ij}(t,x)=\sum_{n=1}^{M_0}
a_{n}^{ij}(t,x)1_{(\tau_{n-1},\tau_n]}(t),
\end{equation}
where each $a_{n}^{ij}$ are  uniformly continuous  in $(t,x)$, that is for any $\varepsilon>0$,
there exists a  $\delta >0$ such that
$$
|a^{ij}_n(t,x)-a^{ij}_n(s,y)|\leq\varepsilon, \quad \forall \, \omega \in \Omega
$$
whenever $|(t,x)-(s,y)|\leq \delta$.

\noindent
$(iii)$ There exists a constant $\delta_0\in(0,1]$  so that for any $n,\omega,t,x$
\begin{equation}
           \label{elliptic}
\delta_0 |\xi|^{2}\leq a_{n}^{ij}(t,x)\xi^{i}\xi^{j}\leq \delta_0^{-1}|\xi|^{2},\quad \forall \xi\in\mathbb{R}^{d},
\end{equation}
$$
|b^{i}(t,x)|+|c(t,x)|+|\sigma^{ij}(t,x)|_{l^{2}}+|\mu^{i}(t,x)|_{l_{2}}+|\nu(t,x)|_{l^{2}}\leq \delta^{-1}_0.
$$

\noindent
$(iv)$ $\sigma^{ijk}=0$ if $\beta\geq1/2$, and $\mu^{ik}=0$
if $\beta\geq1/2+\alpha/2$ for every $i,j,k,\omega,t,x$.
\end{assumption}

Recall for $a\in \bR$, $a_+:=a \vee 0$.
For $\kappa\in (0,1)$, denote
\begin{equation}
     \label{c_0}
c_0=c_0(\alpha,\beta)=\frac{(2\beta-1)_+}{\alpha}, \quad c'_0=c'_0(\kappa)=c_0+\kappa 1_{\beta=1/2}.
\end{equation}
Note that $c'_0\in [0,2)$ because $\beta<\alpha+\frac{1}{2}$, and $c_0=c'_0=0$ if $\beta<1/2$.

\begin{rem}

(i) Assumption \ref{assu:common}(iv) is made on the basis of the model equation
$$
\partial^{\alpha}_tu=(\Delta u + \tilde{f})dt +\partial^{\beta}_t \int^t_0 g^k dw^k_s, \quad \quad u(0)=1_{\alpha>1}u'(0)=0,
$$
for which the following sharp estimate holds (see Lemma \ref{lem:L2 result} and Theorem \ref{thm:model eqn}):
for any $\gamma \in \bR$ and  $\kappa>0$,
\begin{equation}
      \label{model esti}
      \|u\|_{\bH^{\gamma+2}_p(T)}\leq c\left(\|\tilde{f}\|_{\bH^{\gamma}_p(T)}+\|g\|_{\bH^{\gamma+c'_0}_p(T,l_2)}\right).
       \end{equation}
Thus to have  $H^{\gamma+2}_p$-valued solutions  we need
       $\tilde{f}\in \bH^{\gamma}_p(T)$ and $g\in \bH^{\gamma+c'_0}_p(T,l_2)$.  In particular  if $\beta<1/2$ then the solution is twice more differentiable than $g$.  This enables us to have  the second derivatives of solutions  in the stochastic parts of equations
       (\ref{eq:target SPDE non-div}) and (\ref {eq:target SPDE div}).

    (ii)   For the solution of stochastic heat equation $du=\Delta u dt + g(u)dW_t$ (this is the case when $\alpha=\beta=1$), the solution is once more differentiable than $g$ (i.e. $\|\nabla u\|_{L_p} \approx \|g\|_{L_p}$), and  if $g$   contains any second-order derivatives of $u$ then one cannot control $\nabla u$ and any other derivatives of $u$.

     \end{rem}

     \begin{rem}
 Due to  (\ref{model esti}) we need  $c'_0>c_0$ if $\beta=1/2$.  This is why in Assumption
 \ref{assu:non-div} below we impose extra smoothness on the coefficients and free terms of the stochastic parts when $\beta=1/2$.

\end{rem}

To describe the regularity of the coefficients we introduce the following space introduced e.g. in \cite{Krylov1999}. Fix $\delta_1>0$, and for each $r\geq 0$, let
$$
B^{r}:=
\begin{cases}
L_{\infty}(\mathbb{R}^{d})&: r=0\\
C^{r-1,1}(\mathbb{R}^{d})&: r=1,2,3,\cdots\\
C^{r+\delta_1}(\mathbb{R}^{d})&: \text{otherwise},
\end{cases}
$$
where $C^{r+\delta_1}(\mathbb{R}^{d})$ and
$C^{r-1,1}(\mathbb{R}^{d})$ are the H\"{o}lder space and the
Zygmund space respectively. We also define the space
$B^{r}(l_{2})$ for $l_2$-valued functions
using $|\cdot|_{l_2}$ in place of $|\cdot|$.

It is well-known (e.g. \cite[Lemma 5.2]{Krylov1999}) that for any $\gamma\in \bR$,   $u\in H^{\gamma}_p$ and $a\in B^{|\gamma|}$,
\begin{equation}
        \label{multi}
\|au\|_{H^{\gamma}_p}\leq N(d,p,\delta_1,\gamma) |a|_{B^{|\gamma|}} \|u\|_{H^{\gamma}_p},
\end{equation}
and similarly for any $b\in B^{|\gamma|}(l_2)$,
\begin{align}
                    \label{multi2}
 \|bu\|_{H^{\gamma}_p(l_2)}\leq N(d,p,\delta_1,\gamma) |b|_{B^{|\gamma|}(l_2)} \|u\|_{H^{\gamma}_p}.
\end{align}

The following assumption is  only for the divergence type equation.
We use the notation $f^i(u)$, $h(u)$, and $g(u)$ to denote $f^i(t,x,u)$, $h(t,x,u)$, and $g(t,x,u)$, respectively.
Take $c'_0$ from  (\ref{c_0}) and note $ c'_0-1<1$.

\begin{assumption}
                        \label{assu:div}

  $(i)$  There exists a $\kappa\in (0,1)$ so that for any $u\in \bH^1_p(T)$,
$$f^i(u) \in \bL_p(T), \quad h(u) \in \bH^{-1}_p(T), \quad g(u) \in \bH_p^{c'_0-1}(T,l_2).
$$
   $(ii)$ For any $\varepsilon>0$ there exists $K_{1}=K_{1}(\varepsilon)$
so that
\begin{align} \nonumber
&\left\Vert f^{i}(t,\cdot,u)-f^{i}(t,\cdot,v)\right\Vert _{L_{p}}
+\left\Vert h(t,\cdot,u)-h(t,\cdot,v)\right\Vert _{H_{p}^{-1}(l_2)}\\
&+\left\Vert g(t,\cdot,u)-g(t,\cdot,v)\right\Vert _{H_{p}^{c'_{0}-1}(l_2)}
  \leq\varepsilon\|u-v\|_{H_{p}^{1}}+K_{1}\|u-v\|_{L_{p}}
                                                 \label{eq:assumption div}
\end{align}
for all $u,v\in H_{p}^{1}$ and $\omega,t$.

\noindent
$(iii)$ There exists a constant $K_{2}>0$ such that
$$
\left|\sigma^{ij}(t,\cdot)\right|_{B^{1}(l_{2})}+\left|\mu^{i}(t,\cdot)\right|_{B^{|c'_{0}-1|}(l_{2})}+\left|\nu(t,\cdot)\right|_{B^{|c'_{0}-1|}(l_{2})}\leq K_{2},\quad\forall i,j,\omega,t.
$$
\end{assumption}

\vspace{3mm}

Note that (\ref{eq:assumption div}) is certainly satisfied if $f^i(v)$, $h(v)$, and $g(v)$ are Lipschitz continuous with repsect to $v$ in their corresponding spaces uniformly on $\omega$ and $t$ . Indeed, if $g(v)$ is Lipschitz continuous then  using $c'_0-1<1$ and an interpolation inequality (see e.g. \cite[Section 2.4.7]{T}), we get for any $\varepsilon>0$,
$$
\|g(u)-g(v)\|_{H^{c'_0-1}_p(l_2)}\leq N  \|u-v\|_{H^{c'_0-1}_p}\leq \varepsilon \|u-v\|_{H^1_p}+K(\varepsilon)\|u-v\|_{L_p}.
$$

Finally we give our main result for divergence equation (\ref{eq:target SPDE div}).
\begin{thm}
                    \label{thm:main results div}

Let $p\geq2$.
Suppose that Assumptions \ref{assu:common} and \ref{assu:div} hold.
Then divergence type equation \eqref{eq:target SPDE div}
with the zero initial condition  has a unique solution
$u\in\mathcal{H}_{p}^{1}(T)$ in the sense of  Definition \ref{def:solution space}, and for this solution we have
\begin{equation}
\|u\|_{\mathbb{H}_{p}^{1}(T)}\leq N\left(\|f^i(0)\|_{\mathbb{L}_{p}(T)}+\|h(0)\|_{\mathbb{H}_{p}^{-1}(T)}+\|g(0)\|_{\mathbb{H}_{p}^{c'_{0}-1}(T)}\right),\label{eq:a priori estimate div}
\end{equation}
where the constant $N$ depends only on $d$, $p$, $\alpha$, $\beta$, $\kappa$, $\delta_0$, $\delta_1$, $K_1$, $K_2$, and $T$.
\end{thm}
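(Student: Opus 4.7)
The overall strategy is to reduce the full nonlinear equation \eqref{eq:target SPDE div} to its linear counterpart by a Picard-type iteration, exploiting the $\varepsilon$-smallness at top order provided by Assumption \ref{assu:div}(ii). Given $v\in\cH^1_p(T)$, Assumption \ref{assu:div}(i) places $f^i(v)\in\bL_p(T)$, $h(v)\in\bH^{-1}_p(T)$, and $g^k(v)\in\bH^{c'_0-1}_p(T,l_2)$, so these may be treated as free data in the \emph{linear} divergence equation, defining a map $\Phi:\cH^1_p(T)\to\cH^1_p(T)$ by $u=\Phi(v)$. The theorem will then follow once we have (i) unique solvability of the linear equation in $\cH^1_p(T)$ together with an a priori estimate of the form $\|u\|_{\bH^1_p(T)}\le N(\|\bar f^i\|_{\bL_p(T)}+\|\bar h\|_{\bH^{-1}_p(T)}+\|\bar g\|_{\bH^{c'_0-1}_p(T,l_2)})$, and (ii) a contraction argument for $\Phi$.

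For the linear case I would follow the Krylov-type scheme built in Sections~3--4 for the model equation $\partial^\alpha_t u=\Delta u+\tilde f+\partial^\beta_t\int_0^t g^k\,dw^k_s$, adapted to divergence form. The steps are: (a) for $a^{ij}$ frozen in $x$, rewrite $D_{x^i}(a^{ij}u_{x^j})=a^{ij}u_{x^ix^j}$ and invoke the model-equation estimate of Section~4, absorbing the drift terms $D_{x^i}(b^iu)$ and $cu$ into $\bar h$ via \eqref{multi} together with Assumption \ref{assu:common}(iii); (b) localize with a smooth partition of unity $\{\zeta_k^2\}$ of support diameter $\le r$, use the uniform continuity of $a^{ij}(t,\cdot)$ (Assumption \ref{assu:common}(ii)) to make the oscillation of $a^{ij}$ on each ball as small as desired, apply the frozen estimate to each $u\zeta_k$, integrate by parts in the commutator $[\zeta_k,D_{x^i}(a^{ij}D_{x^j}\cdot)]$ so that only $a^{ij}\cdot D\zeta_k$ and $a^{ij}\cdot D^2\zeta_k$ appear (never a derivative of $a^{ij}$), and absorb the small commutator errors into the left-hand side after choosing $r$ sufficiently small; (c) concatenate across the intervals $(\tau_{n-1},\tau_n]$ given by Assumption \ref{assu:common}(ii) using $u(\tau_{n-1})$ as initial datum; (d) pass from the a priori estimate to solvability by the method of continuity along $a^{ij}_\lambda=\lambda a^{ij}+(1-\lambda)\delta^{ij}$, $\lambda\in[0,1]$, which preserves the ellipticity constant $\delta_0$. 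The stochastic part has the same structure as in the non-divergence equation, so Assumption \ref{assu:common}(iv) and the multiplier estimates \eqref{multi}--\eqref{multi2} together with Assumption \ref{assu:div}(iii) guarantee $\sigma^{ijk}u_{x^ix^j}+\mu^{ik}u_{x^i}+\nu^k u\in\bH^{c'_0-1}_p(T,l_2)$, so it may be reduced to the stochastic free term in the same way as in Section~5.

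With the linear result in hand, the contraction is standard. For $U:=\Phi(v_1)-\Phi(v_2)$, the linear estimate combined with Assumption \ref{assu:div}(ii) yields
$$
\|U\|_{\bH^1_p(T)}\le N\varepsilon\|v_1-v_2\|_{\bH^1_p(T)}+NK_1(\varepsilon)\|v_1-v_2\|_{\bL_p(T)}.
$$
Fix $\varepsilon$ with $N\varepsilon\le 1/2$ and absorb the first term into the left-hand side. To avoid working on small time intervals, I would re-prove the linear estimate in integrated form, in the spirit of Theorem \ref{lem:solution space}(iv), thereby obtaining
$$
\|U\|_{\bH^1_p(t)}^p\le N\int_0^t(t-s)^{\theta-1}\|v_1-v_2\|_{\bH^1_p(s)}^p\,ds.
$$
Iterating in the standard way then shows that $\Phi^n$ is a contraction on $\cH^1_p(T)$ for all sufficiently large $n$, whence a unique fixed point $u$ exists. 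Estimate \eqref{eq:a priori estimate div} follows from the linear estimate applied to $u=\Phi(u)$ upon invoking Assumption \ref{assu:div}(ii) with $v_2\equiv 0$.

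The main technical obstacle is step (b): the a priori estimate for the linear divergence equation at the critical regularity level $\gamma=-1$, where only $L_\infty$ control of $a^{ij}$ is available. The point is that in divergence form the partition-of-unity commutator can be integrated by parts so that no derivative of $a^{ij}$ ever appears; this is what both restricts the divergence theorem to $\gamma=-1$ (one power below the non-divergence setting, which admits arbitrary $\gamma$) and explains why Assumption \ref{assu:common}(ii) imposes no smoothness of $a^{ij}$ in $x$ beyond uniform continuity. A secondary subtlety is the proper restart of the fractional-time equation at the interior jump times $\tau_{n-1}$: identifying $u(\tau_{n-1})$ and, when $\alpha>1$, $\partial_t u(\tau_{n-1})$ in the spaces $U^{\alpha,-1}_p$ and $U^{\alpha-1,-1}_p$ must be done through the continuous representative $\bI^{\Lambda-\alpha}_t u$ supplied by Definition \ref{def:solution space}, because the Caputo derivative retains a tail memory that has to be transferred correctly between successive subintervals.
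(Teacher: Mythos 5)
Your overall architecture --- a deterministic a priori estimate for the divergence equation via coefficient freezing plus a partition of unity, the method of continuity, splitting off the stochastic integral through the model equation of Section~4, and a Picard/contraction argument exploiting the $\varepsilon$-smallness in Assumption~\ref{assu:div}(ii) --- is the same as the paper's (Lemma~\ref{det112} together with Steps 1 and 2 of Section~6). Your commutator bookkeeping in step (b) also matches: the paper's $\bar f^{n,i}$ and $\bar h^n$ contain only $a^{ij}$ times derivatives of the cutoffs, and the residual term $\|a^{ij}u_{x^j}\|_{H^{-1}_p}$ is absorbed by approximating $a^{ij}$ uniformly in $x$ by $C^1$ functions and interpolating. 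The reduction of $\sigma^{ijk}u_{x^ix^j}+\mu^{ik}u_{x^i}+\nu^k u$ to a stochastic free term via an auxiliary solution $v$ of the model equation, estimated with Lemma~\ref{lem:pde approach} and the fractional Gronwall inequality when $\beta<1/2$, is likewise what the paper does.

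The one step that would fail as written is your item (c). For a Caputo operator you cannot concatenate across the jump times $\tau_{n-1}$ by restarting with $u(\tau_{n-1})$ (and $\partial_t u(\tau_{n-1})$ when $\alpha>1$) as initial data: on $(\tau_{n-1},\tau_n]$ the operator $\partial^\alpha_t$ still integrates over the whole history $[0,\tau_{n-1}]$, and the discrepancy between $\partial^\alpha_t$ based at $0$ and based at $\tau_{n-1}$ is a nonlocal tail functional of $u|_{[0,\tau_{n-1}]}$, not an initial-value correction living in $U^{\alpha,-1}_p$. Your closing remark correctly senses the memory issue but misdiagnoses the cure --- it is not a matter of identifying the restart data through the continuous representative $\bI^{\Lambda-\alpha}_t u$. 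In addition, in the stochastic theorem the $\tau_n$ are stopping times, so a pathwise restart would also have to respect adaptedness. The paper avoids all of this: the piecewise-in-$t$ structure is handled entirely inside the deterministic Lemmas~\ref{det11} and~\ref{det112} by the perturbation scheme of \cite[Theorem 2.9, Steps 5--6]{KKL2014}, which yields a constant $N$ depending on $M_0$ but not on the locations of the jump times, so the estimate can then be applied $\omega$-by-$\omega$ and integrated over $\Omega$. The fix, in short, is not to restart at all.
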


 Next we introduce our result for non-divergence equation.
 To have $H^{\gamma+2}_p$-valued solution we assume the following conditions.

\begin{assumption}
                    \label{assu:non-div}

$(i)$ There exists a $\kappa\in (0,1)$ so that for any $u\in \bH^{\gamma+2}_p(T)$,
$$f(u) \in \bH^{\gamma}_p(T),  \quad g(u) \in \bH_p^{\gamma+c'_0}(T,l_2).
$$

\noindent
$(ii)$ There exists a constant $K_3$ so that for  any $\omega$, $t$, $i$, $j$,
\begin{equation}
             \label{multiplier}
|a^{ij}(t,\cdot)|_{B^{|\gamma|} }+ |b^i(t,\cdot)|_{B^{|\gamma|} }+ |c(t,\cdot)|_{B^{|\gamma|} }
\leq K_3,
\end{equation}
and
$$
|\sigma^{ij}(t,\cdot)|_{B^{|\gamma+c'_{0}|}(l_{2})}+|\mu^{i}(t,\cdot)|_{B^{|\gamma+c'_{0}|}(l_{2})}+|\nu(t,\cdot)|_{B^{|\gamma+c'_{0}|}(l_{2})}
\leq K_3.
$$

\noindent
$(iii)$ For any $\varepsilon>0$, there exists a constant $K_{4}=K_4(\varepsilon)>0$
such that
\begin{align}
                    \notag
\left\Vert f(t,u)-f(t,v)\right\Vert _{H^{\gamma}_{p}}+ \left\Vert g(t,u)-g(t,v)\right\Vert _{H_{p}^{\gamma+c'_{0}}(l_{2})}\\
\leq\varepsilon\|u-v\|_{H_{p}^{\gamma+2}}+K_{4}\|u-v\|_{H^{\gamma}_{p}}\label{eq:assumption non-div},
\end{align}
for any $u,v\in H_{p}^{\gamma+2}$ and $\omega,t$.
\end{assumption}

See  \cite{Krylov1999} for some examples of (\ref{eq:assumption non-div}). Here we  introduce only one nontrivial  example.
  Let $\gamma+2-d/p>n$ for some $n\in \{0,1,2,\cdots\}$ and $f_0=f_0(x)\in H^{\gamma}_p$.  Take
  $$
  f(u)=f_0(x) \sup_x |D^n_xu|.
  $$
Take a $\delta>0$ so that $\gamma+2-d/p-n>\delta$.  Using a Sobolev embedding $H^{\gamma+2-\delta}_p \subset C^{\gamma+2-\delta-d/p}\subset C^n$,
 we get for any $u, v \in H^{\gamma+2}_p$ and $\varepsilon>0$,
  \begin{eqnarray*}
 && \|f(u)-f(v)\|_{H^{\gamma}_p}\leq \|f_0\|_{H^{\gamma}_p} \sup_x |D^n_x(u-v)|\leq N|u-v|_{C^n}\\
 &\leq& N\|u-v\|_{H^{\gamma+2-\delta}_p}\leq  \varepsilon \|u-v\|_{H^{\gamma+2}_p}+K(\varepsilon)\|u-v\|_{H^{\gamma}_p}.
  \end{eqnarray*}

\vspace{3mm}

Here is our main result for non-divergence equation \eqref{eq:target SPDE non-div}.

\begin{thm}
                    \label{thm:main results non-div}

Let $\gamma\in \bR$ and $p\geq2$.
Suppose that Assumptions \ref{assu:common} and \ref{assu:non-div} hold.
Then non-divergence type equation
\eqref{eq:target SPDE non-div} with  zero initial condition
has a unique solution $u\in\mathcal{H}_{p}^{\gamma+2}(T)$ in the sense of Definition \ref{def:solution space}, and for this solution
\begin{equation}   \label{eq: a priori estimate non-div}\|u\|_{\mathbb{H}_{p}^{\gamma+2}(T)}\leq
 N\left(\|f(0)\|_{\mathbb{H}_{p}^{\gamma}(T)}+\|g(0)\|_{\mathbb{H}_{p}^{\gamma+c'_{0}}(T,l_{2})}\right),
                    \end{equation}
where the constant $N$ depends only on $d$, $p$,  $\alpha$, $\beta$, $\kappa$, $\delta_0$, $\delta_1$, $K_3$, $K_4$, and $T$.
\end{thm}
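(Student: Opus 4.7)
The plan is to follow Krylov's analytic scheme in three stages: first, promote the model-equation theory of Section~4 to variable leading coefficients by combining a freezing-coefficient argument with the method of continuity; second, absorb the lower-order deterministic coefficients $(b^i, c)$ and the stochastic coefficients $(\sigma^{ijk},\mu^{ik},\nu^k)$ as perturbations; third, treat the semilinear forcing $f(u)$ and $g(u)$ via a Banach fixed-point argument based on the time-weighted estimate of Theorem~\ref{lem:solution space}(iv).

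For the linear core I would first establish an a priori estimate for the linear equation in which $f=f(t,x)$ and $g=g(t,x)$ are independent of $u$. By Assumption~\ref{assu:common}(ii), on each stochastic interval $(\tau_{n-1},\tau_n]$ the leading coefficient $a_n^{ij}$ is uniformly continuous, so I would fix a smooth partition of unity $\{\zeta_m\}$ of $\bR^d$ of scale $\rho$ and freeze $a_n^{ij}$ at the center $x_m$ of each $\operatorname{supp}\zeta_m$. Applying the $\gamma$-level model-equation estimate (Theorem~\ref{thm:model eqn}) to $u\zeta_m$ and summing over $m$, then using the oscillation bound $|a_n^{ij}(t,x)-a_n^{ij}(t,x_m)|\le \varepsilon$ on $\operatorname{supp}\zeta_m$, the multiplier estimates (\ref{multi})--(\ref{multi2}), and standard interpolation to handle $b^i u_{x^i}, cu$ and the stochastic lower-order terms, yields an inequality of the form
$$
\|u\|_{\bH^{\gamma+2}_p(T)} \le N\bigl(\|f\|_{\bH^\gamma_p(T)} + \|g\|_{\bH^{\gamma+c'_0}_p(T,l_2)} + \|u\|_{\bH^\gamma_p(T)}\bigr).
$$
The lower-order remainder $\|u\|_{\bH^\gamma_p(T)}$ is handled by the fractional integral inequality of Theorem~\ref{lem:solution space}(iv) followed by the generalized Gronwall lemma for kernels $(t-s)^{\theta-1}$, producing the clean estimate (\ref{eq: a priori estimate non-div}) in the linear case. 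Existence in the linear case then follows from the method of continuity applied to $L_\lambda := (1-\lambda)\Delta + \lambda L$ and $\Lambda^k_\lambda := \lambda \Lambda^k$ on $\lambda\in[0,1]$, anchored at $\lambda=0$ by the model-equation result.

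For the semilinear case I would define $\mathcal{T}: u \mapsto v$, where $v\in\cH_p^{\gamma+2}(T)$ is the unique linear solution with forcing $f(u)$ and $g(u)$; the linear a priori estimate shows $\mathcal{T}$ maps $\cH_p^{\gamma+2}(T)$ into itself. Combining Theorem~\ref{lem:solution space}(iv) with Assumption~\ref{assu:non-div}(iii) gives
$$
\|\mathcal{T}u - \mathcal{T}u'\|_{\bH^{\gamma+2}_p(t)}^p \le N\int_0^t (t-s)^{\theta-1}\Bigl(\varepsilon^p \|u-u'\|_{\bH^{\gamma+2}_p(s)}^p + K_4^p \|u-u'\|_{\bH^\gamma_p(s)}^p\Bigr)\,ds.
$$
Choosing $\varepsilon$ so small that $\varepsilon^p N T^\theta/\theta < 1/2$ absorbs the first term; the fractional Gronwall inequality then shows $\mathcal{T}$ is a contraction on a short time interval, and a standard patching argument across $[0,T]$ gives uniqueness and existence, with (\ref{eq: a priori estimate non-div}) obtained by applying the contraction estimate with $u'\equiv 0$.

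The hardest step is the freezing-coefficient a priori estimate, specifically capturing the correct $c'_0$-shift for the stochastic lower-order terms. The structural restriction in Assumption~\ref{assu:common}(iv), namely $\sigma\equiv 0$ when $\beta\ge 1/2$ and $\mu\equiv 0$ when $\beta\ge 1/2+\alpha/2$, is exactly what is needed to keep the contributions $\sigma^{ijk}u_{x^ix^j}$, $\mu^{ik}u_{x^i}$, $\nu^k u$ bounded in $\bH^{\gamma+c'_0}_p(T,l_2)$ by $\|u\|_{\bH^{\gamma+2}_p(T)}$ with a small coefficient, so that they can be absorbed after invoking the sharp model estimate, which loses $c'_0$ derivatives. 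Verifying that this balance holds uniformly across the freezing decomposition and over the time-pieces defined by the stopping times $\tau_n$, via (\ref{multi})--(\ref{multi2}), is the main technical obstacle.
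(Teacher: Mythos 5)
Your overall architecture (model equation $\to$ method of continuity $\to$ perturbation $\to$ fixed point) matches the paper's, but there is a genuine gap in how you treat the second-order stochastic term $\sigma^{ijk}u_{x^ix^j}$, and this is exactly the step you single out as hardest. You claim that Assumption \ref{assu:common}(iv) keeps $\sigma^{ijk}u_{x^ix^j}$ bounded in $\bH^{\gamma+c'_0}_p(T,l_2)$ by $\|u\|_{\bH^{\gamma+2}_p(T)}$ \emph{with a small coefficient} so that it can be absorbed after invoking the model estimate. That is false: $\sigma$ is present only when $\beta<1/2$, in which case $c'_0=0$, so $\|\sigma^{ij\cdot}u_{x^ix^j}\|_{H^{\gamma+c'_0}_p(l_2)}\leq N\|u\|_{H^{\gamma+2}_p}$ with $N$ controlled only by $\delta_0^{-1}$ and $K_3$ --- there is no smallness and no derivative gap left for interpolation, so absorption fails. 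The paper's resolution is structural: it first solves the auxiliary model equation $\partial^{\alpha}_t v=\Delta v+\partial^{\beta}_t\int_0^t(\sigma^{ijk}u_{x^ix^j}+g^k)\,dw^k_s$ and, for $\beta<1/2$, uses Lemma \ref{lem:pde approach} to convert the stochastic convolution into a forcing weighted by the Volterra kernel $I^{1-2\beta}_t$; the resulting bound $\|v\|^p_{\cH^{\gamma+2}_p(t)}\leq N\,I^{1-2\beta}_t\|u\|^p_{\cH^{\gamma+2}_p(\cdot)}(t)+N\|g\|^p_{\bH^{\gamma}_p(T,l_2)}$ is then closed by the fractional Gronwall lemma, not by smallness. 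Your proposal has no substitute for this time-smoothing mechanism.

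A second, related issue: you propose running the partition-of-unity/freezing argument directly on the stochastic equation, which would require a constant-coefficient stochastic model theorem for general $(a_0^{ij},\sigma_0^{ijk})$; Theorem \ref{thm:model eqn} covers only $\Delta$ with no operator acting on $u$ in the noise. The paper avoids this by the splitting $u=\bar u+v$ described above: $v$ carries the entire stochastic part against the pure Laplacian, while $\bar u$ solves a purely deterministic variable-coefficient equation, and all freezing/partition-of-unity work is confined to that deterministic problem (Lemma \ref{det11}). Without either this reduction or a strengthened model theorem, your Stage 1 does not go through. A minor further caution: your ``short-time contraction plus patching'' for the semilinear step is delicate because the Caputo derivative is nonlocal in time, so the equation cannot simply be restarted at an interior time; the paper instead shows $\cR^n$ is a contraction on all of $[0,T]$ for $n$ large via the iterated-kernel identity.
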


\mysection{Parabolic Littlewood-Paley inequality}

In this section  we  obtain a sharp $L_p$-estimate for solutions to the model equation
\begin{equation}
      \label{plp}
\partial^{\alpha}_tu=\Delta u +\partial^{\beta}_t \int^t_0 g^k dw^k_s.
\end{equation}
For this,  we prove the parabolic  Littlewood-Paley inequality related to the equation. For the classical case $\alpha=\beta=1$ we refer to \cite{IPK,Kr01, kr94}.

Consider the  fractional diffusion-wave equation
\begin{equation}
                                               \label{deter}
\partial_{t}^{\alpha}u(t,x)=\Delta u(t,x), \quad u(0)=u_0, \quad 1_{\alpha>1}u'(0)=0.
\end{equation}
By  taking the Fourier transform and the inverse Fourier transform  with respect to $x$, we formally find that $u(t)=p(t)*u_0$ is a solution to this problem if $p(t,x)$ satisfies
\begin{equation}
                             \label{eqn 8.17.2}
\partial^{\alpha}_t \cF(p)=-|\xi|^2 \cF(p),\quad \cF(p)(0,\xi)=1, \quad 1_{\alpha>1}\cF\left(\frac{\partial p}{\partial t}\right)(0,\xi)=0.
\end{equation}
It turns out that (see \cite{eik2004, KS2015}  or Lemma  \ref {prop:kernel esti. of q} below) there exists a function $p(t,x)$, called the fundamental solution, such that it satisfies (\ref{eqn 8.17.2}). It is also true that $p$ is  infinitely differentiable in $(0,\infty)\times\mathbb{R}^{d}\setminus \{0\}$ and $\lim_{t\rightarrow 0} \frac{\partial^np(t,x)}{\partial t^n}=0$ if $x\neq 0$.
Define
\begin{equation}
q_{\alpha,\beta}(t,x):=\begin{cases}
I_{t}^{\alpha-\beta}p(t,x) & :\alpha\geq\beta\\
D_{t}^{\beta-\alpha}p(t,x) & :\alpha<\beta,
\end{cases}
                                            \label{eq:definition of q_=00007Ba,b=00007D}
\end{equation}
and
$$
 q(t,x):=q_{\alpha,1}(t,x).
$$
Note that $q_{\alpha,\beta}$ is well defined due to above mentioned properties of $p$.
Moreover $D_{t}^{\beta-\alpha}p(t,x)=\partial_{t}^{\beta-\alpha}p(t,x)$ since $p(0,x)=0$ if $x\neq 0$.

\vspace{1mm}

In the following lemma  we collect some important properties of $p(t,x)$, $q(t,x)$, and $q_{\alpha,\beta}(t,x)$ taken from
\cite{eik2004} and \cite{KS2015}.

\begin{lem}
                        \label{prop:kernel esti. of q}

Let $d\in \bN$, $\alpha\in(0,2)$,
$\beta<\alpha+\frac{1}{2}$, and $\gamma\in[0,2)$.

\vspace{1mm}
\noindent
$(i)$ There exists a fundamental solution $p(t,x)$ satisfying above mentioned properties. It also holds that
for any $t\neq0$ and $x\neq0$,
\begin{equation}
             \label{eqn 8.17.1}
\partial_{t}^{\alpha}p(t,x)=\Delta p(t,x), \quad
\frac{\partial p(t,x)}{\partial t}=\Delta q(t,x),
\end{equation}
and for each $x\neq 0$,  $\frac{\partial}{\partial t}p(t,x)\to 0$ as $t\downarrow 0$.
Moreover, $\frac{\partial}{\partial t}p(t,\cdot)$ is integrable in $\mathbb{R}^{d}$ uniformly on $t \in [\varepsilon,T]$ for any $\varepsilon>0$.

\vspace{1mm}

\noindent
$(ii)$ If $n\leq3$, $D_{x}^{n}q(t,\cdot)$ is integrable in $\mathbb{R}^{d}$ uniformly on $t \in [\varepsilon,T]$ for any $\varepsilon>0$.

\vspace{1mm}

\noindent
$(iii)$ There exist constants $c=c(d,\alpha)$ and $N=N(d,\alpha)$ such that if  $|x|^2 \geq t^\alpha$,
\begin{align}
                    \label{p ker est}
\left|p(t,x)\right|
&\leq N |x|^{-d}  \exp\left\{  -c |x|^{\frac{2}{2-\alpha}} t^{-\frac{\alpha}{2-\alpha}}\right\}.
\end{align}

\vspace{1mm}

\noindent
$(iv)$
It holds that
\begin{align}
                    \label{mit rela}
\mathcal{F}\{D_t^\sigma  q_{\alpha,\beta}(t,\cdot)\}(\xi)=t^{\alpha-\beta-\sigma}E_{\alpha,1+\alpha-\beta-\sigma}(-|\xi|^{2}t^{\alpha}),
\end{align}
where $E_{a,b}(z)$, $a>0$,  is the Mittag-Leffler function  defined as
$$
E_{a,b}(z):=\sum_{k=0}^{\infty}\frac{z^{k}}{\Gamma(ak+b)}, \quad   z\in \bC.
$$

\vspace{1mm}

\noindent
$(v)$
There exists a constant $N=N(d,\gamma,\alpha,\beta)$ such that
$$
\left|D_t^\sigma (-\Delta)^{\gamma/2}q_{\alpha,\beta}(1,x)\right|
+\left|D_t^\sigma(-\Delta)^{\gamma/2}\partial_t q_{\alpha,\beta}(1,x)\right|
\leq N\left(|x|^{-d+2-\gamma}\wedge|x|^{-d-\gamma}\right)
$$
if $d \geq 2$,
and
\begin{align*}
&\left|D_t^\sigma(-\Delta)^{\gamma/2}q_{\alpha,\beta}(1,x)\right|
+\left|D_t^\sigma(-\Delta)^{\gamma/2}\partial_t q_{\alpha,\beta}(1,x)\right| \\
&\qquad  \leq N\left(\{|x|^{1-\gamma}(1+\ln|x|1_{\gamma=1})\}\wedge|x|^{-1-\gamma}\right)
\end{align*}
if $d=1$.
Furthermore, for each $n\in\mathbb{N}$
\begin{align}
                    \notag
&\left|D_t^\sigma D_{x}^{n}(-\Delta)^{\gamma/2}q_{\alpha,\beta}(1,x)\right|
+\left|D_t^\sigma D_{x}^{n}(-\Delta)^{\gamma/2}\partial_t q_{\alpha,\beta}(1,x)\right| \\
                    \label{de ker est}
&\qquad  \leq N(d,\gamma,\alpha,\beta,n)\left(|x|^{-d+2-\gamma-n}\wedge|x|^{-d-\gamma-n}\right).
\end{align}

\vspace{1mm}

\noindent
(vi)
The scaling properties hold:
\begin{equation}
                    \label{eq:q scaling prop}
q_{\alpha,\beta}(t,x)=t^{-\frac{\alpha d}{2}+\alpha-\beta}q_{\alpha,\beta}(1,xt^{-\frac{\alpha}{2}}),
\end{equation}
\begin{equation}
                    \label{eq:q gamma scaling prop}
D_t^\sigma (-\Delta)^{\gamma/2}q_{\alpha,\beta}(t,x)
=t^{-\sigma-\frac{\alpha(d+\gamma)}{2}+\alpha-\beta}
D_t^\sigma (-\Delta)^{\gamma/2}q_{\alpha,\beta}(1,xt^{-\frac{\alpha}{2}}).
\end{equation}

\end{lem}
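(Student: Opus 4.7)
My plan is to build everything from the Fourier-side definition of the fundamental solution and then transfer to pointwise estimates via inverse transform and either subordination or contour integration. Given the results are attributed to \cite{eik2004, KS2015}, I view this lemma as a collection of analytic facts about a single explicit object, and I would prove them in the order (iv), (vi), then (i)--(iii), (v).

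First, I would \emph{define} $p(t,x)$ through its Fourier transform, $\mathcal{F}(p)(t,\xi):=E_{\alpha,1}(-|\xi|^{2}t^{\alpha})$. Term-by-term application of the Caputo derivative to the series, together with the identity $\partial^{\alpha}_{t}t^{\alpha k}=\Gamma(\alpha k+1)/\Gamma(\alpha k-\alpha+1)\,t^{\alpha(k-1)}$ (valid for $k\geq 1$ and giving $0$ for $k=0$), yields the ODE
\[
\partial^{\alpha}_{t}\,E_{\alpha,1}(-|\xi|^{2}t^{\alpha})=-|\xi|^{2}E_{\alpha,1}(-|\xi|^{2}t^{\alpha}),
\]
together with $\mathcal{F}(p)(0,\xi)=1$ and, for $\alpha>1$, $\partial_{t}\mathcal{F}(p)(0,\xi)=0$ (the series contains no linear term in $t$). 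Formula (iv) follows from the power-series identity $I^{\mu}_{t}\bigl[t^{b-1}E_{\alpha,b}(\lambda t^{\alpha})\bigr]=t^{b+\mu-1}E_{\alpha,b+\mu}(\lambda t^{\alpha})$, itself a consequence of $I^{\mu}_{t}t^{\nu}=\Gamma(\nu+1)/\Gamma(\nu+\mu+1)\,t^{\nu+\mu}$; applying this with $\mu=\alpha-\beta-\sigma$ (and $I^{-\sigma}_{t}=D^{\sigma}_{t}$ when $\sigma>0$) to $\mathcal{F}(p)$ gives the claimed expression. The scaling (vi) is then automatic: the change of variables $\eta=\xi t^{\alpha/2}$ in the inverse Fourier transform of $t^{\alpha-\beta-\sigma}E_{\alpha,1+\alpha-\beta-\sigma}(-|\xi|^{2}t^{\alpha})$ factors out $t^{-\sigma-\alpha(d+\gamma)/2+\alpha-\beta}$.

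The real work is in the pointwise bounds (iii) and (v), from which the integrability statements in (i) and (ii) follow by integrating the upper envelopes against $r^{d-1}dr$ in spherical coordinates (the exponents $-d+2-\gamma-n$ near the origin remain integrable precisely when $n\leq 3$ and $\gamma<2$, matching (ii)). By scaling (\ref{eq:q gamma scaling prop}) it suffices to establish the estimates at $t=1$. I would do this through two complementary devices. For $\alpha\in(0,1)$, the subordination formula
\[
p(t,x)=\int_{0}^{\infty}(4\pi s)^{-d/2}\exp(-|x|^{2}/(4s))\,f_{\alpha}(t,s)\,ds,
\]
with $f_{\alpha}(t,\cdot)$ the density of the inverse $\alpha$-stable subordinator, immediately produces the compressed-exponential tail (\ref{p ker est}) by inserting the known stretched-exponential decay of $f_{\alpha}$ and computing the Laplace method maximum at $s\asymp |x|^{2/(2-\alpha)}t^{\alpha/(2-\alpha)}$. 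For $\alpha\in[1,2)$ the Mittag-Leffler symbol oscillates; here I would use the Hankel-contour representation $E_{\alpha,b}(-\lambda)=(2\pi i)^{-1}\int_{\mathrm{Ha}}e^{z}z^{\alpha-b}(z^{\alpha}+\lambda)^{-1}dz$, deforming the contour to obtain analogous decay and then recovering the $q_{\alpha,\beta}$ bounds through (iv).

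The hardest point is the sharp dichotomy $|x|^{-d+2-\gamma-n}\wedge|x|^{-d-\gamma-n}$ in (v): the near-origin rate encodes a ``fractional heat kernel'' regularity (the symbol $|\xi|^{\gamma+n}t^{\alpha-\beta-\sigma}E_{\alpha,1+\alpha-\beta-\sigma}(-|\xi|^{2}t^{\alpha})$ behaves like $|\xi|^{\gamma+n-2}$ for large $|\xi|$, a gain of two derivatives), while the far-from-origin rate reflects the low-frequency behavior $E_{\alpha,b}(0)=1/\Gamma(b)$. I would split the inverse Fourier integral at $|\xi|\sim |x|^{-1}$: on $|\xi|\leq |x|^{-1}$ bound the integrand by $|\xi|^{\gamma+n}$ and integrate directly to get the $|x|^{-d-\gamma-n}$ factor; on $|\xi|\geq |x|^{-1}$ perform $\lceil d+\gamma+n\rceil$ integrations by parts in $\xi$, using the uniform estimates $|D^{k}_{\xi}E_{\alpha,b}(-|\xi|^{2})|\lesssim (1+|\xi|^{2})^{-1-k/2}$ (which themselves follow from the Hankel representation), to produce the $|x|^{-d+2-\gamma-n}$ envelope. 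The logarithm in $d=1,\gamma=1$ arises from an integrable borderline term, and the estimates for $\partial_{t}q_{\alpha,\beta}$ are obtained identically starting from (\ref{mit rela}) with $\sigma$ replaced by $\sigma-1$, as permitted by (iv). This also yields the smoothness asserted in (i) and the limit $\partial_{t}p(t,x)\to 0$ as $t\downarrow 0$ for $x\neq 0$ through the scaling identity applied to $\partial_{t}p(t,x)=t^{-1-\alpha d/2}\,\partial_{t}p(1,xt^{-\alpha/2})$ and the compressed-exponential bound (iii) on $\partial_{t}p(1,\cdot)$.
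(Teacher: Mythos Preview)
The paper does not supply a proof at all: it defers every part of this lemma to \cite{KS2015} (Theorems 2.1 and 2.3, Section 6, and equation (5.2)). Your outline therefore goes well beyond what the paper does, and most of it---the derivation of (iv) from the Mittag--Leffler series identity $I^{\mu}_{t}[t^{b-1}E_{\alpha,b}(\lambda t^{\alpha})]=t^{b+\mu-1}E_{\alpha,b+\mu}(\lambda t^{\alpha})$, the scaling (vi) by a Fourier change of variables, and the route to (iii) through subordination when $\alpha\in(0,1)$ and Hankel contours when $\alpha\in[1,2)$---is correct and matches the standard arguments that presumably appear in \cite{KS2015}.

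There is, however, a real gap in your reduction of (ii) to (v). With $\gamma=0$, $\sigma=0$, $\beta=1$, the bound in (v) reads $|D^{n}_{x}q(1,x)|\leq N(|x|^{-d+2-n}\wedge|x|^{-d-n})$, and integrating this envelope in spherical coordinates gives $\int_{0}^{1}r^{1-n}\,dr$ near the origin and $\int_{1}^{\infty}r^{-1-n}\,dr$ at infinity. The first diverges once $n\geq 2$ and the second diverges at $n=0$; together they yield integrability only for $n=1$, not for all $n\leq 3$. Your sentence ``the exponents $-d+2-\gamma-n$ near the origin remain integrable precisely when $n\leq 3$ and $\gamma<2$'' is therefore in error: the correct condition is $\gamma+n<2$.

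What is missing is a cancellation specific to $q=q_{\alpha,1}$: its symbol is $t^{\alpha-1}E_{\alpha,\alpha}(-|\xi|^{2}t^{\alpha})$, and in the asymptotic expansion $E_{\alpha,b}(-r)\sim -\sum_{k\geq 1}r^{-k}/\Gamma(b-\alpha k)$ the leading coefficient $1/\Gamma(0)$ vanishes, so $E_{\alpha,\alpha}(-r)$ decays like $r^{-2}$ rather than $r^{-1}$. This extra decay of the symbol translates into two additional orders of regularity of $q$ at the origin, improving the near-zero envelope from $|x|^{-d+2-n}$ to $|x|^{-d+4-n}$, which is integrable exactly when $n<4$. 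The generic estimate (v) does not capture this cancellation, and you need to invoke it separately (or cite the sharper expansions in \cite{KS2015}) to obtain (ii). The $n=0$ case at infinity likewise requires more than the $|x|^{-d}$ envelope; here smoothness of $E_{\alpha,\alpha}(-|\xi|^{2})$ in $\xi$ gives enough decay.
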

\begin{proof}
(i), (ii), (iii), and (v) are easily obtained from Theorem 2.1 and Theorem 2.3 of \cite{KS2015}.
The proof of (iv) can be found in Seciotn 6 of \cite{KS2015}.
For the scaling property (vi), see \cite[(5.2)]{KS2015}.

\end{proof}

The following result is well-known, for instance if $\alpha \in (0,1]$.
For the completeness of the article, we give a   proof.
\begin{corollary}
                    \label{zero converge}
Let $f \in C_0^2(\bR^d)$. Then
$$
\int_{\bR^d} p(t,x-y)f(y)dy
$$
converges to $f(x)$ uniformly as $t \downarrow 0 $.
\end{corollary}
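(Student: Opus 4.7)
The plan is to use the scaling identity \eqref{eq:q scaling prop} to reduce the problem to showing that the kernel $P(x):=p(1,x)$ lies in $L_{1}(\bR^d)$ with total mass one, and then to conclude by the standard approximation-to-the-identity argument exploiting the boundedness and uniform continuity of $f\in C_0^2(\bR^d)$. Since $p=q_{\alpha,\alpha}$, the scaling identity \eqref{eq:q scaling prop} with $\beta=\alpha$ yields $p(t,x)=t^{-\alpha d/2}P\bigl(xt^{-\alpha/2}\bigr)$, and the substitution $z=(x-y)t^{-\alpha/2}$ in the convolution gives
\[
 \int_{\bR^d}p(t,x-y)f(y)\,dy=\int_{\bR^d}P(z)\,f\bigl(x-t^{\alpha/2}z\bigr)\,dz.
\]

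To verify $P\in L_{1}(\bR^d)$, I would split $\bR^d$ into $\{|x|\geq 1\}$ and $\{|x|<1\}$. The outer region is handled by \eqref{p ker est} at $t=1$, which provides the super-polynomial bound $|P(x)|\leq N|x|^{-d}\exp\bigl(-c|x|^{2/(2-\alpha)}\bigr)$, clearly integrable at infinity. The inner region is controlled by Lemma \ref{prop:kernel esti. of q}(v) at $\gamma=\sigma=0$, which yields $|P(x)|\leq N|x|^{-d+2}$ when $d\geq 2$ and $|P(x)|\leq N|x|$ when $d=1$; both are integrable near the origin. The normalization $\int_{\bR^d}P(z)\,dz=1$ is the defining property of the fundamental solution: from \eqref{mit rela} at $\beta=\alpha$, $\sigma=0$, $\xi=0$ one has $\cF(p)(t,0)=E_{\alpha,1}(0)=1$ for every $t>0$, which (under the convention that makes $p(t,\cdot)\ast u_0$ recover $u_0$ as $t\downarrow 0$) translates to unit mass of $P$.

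Since $f\in C_0^2(\bR^d)$ is bounded and uniformly continuous, the conclusion follows by splitting the rescaled integral at radius $R$: given $\varepsilon>0$, choose $R$ so that $2\|f\|_\infty\int_{|z|>R}|P(z)|\,dz<\varepsilon/2$, then use uniform continuity to pick $\delta>0$ with $|h|<\delta\Rightarrow \sup_x|f(x+h)-f(x)|<\varepsilon/\bigl(2\|P\|_{L_{1}}\bigr)$; for every $t$ with $t^{\alpha/2}R<\delta$ this gives
\[
 \sup_x\left|\int_{\bR^d}P(z)\bigl[f(x-t^{\alpha/2}z)-f(x)\bigr]\,dz\right|<\varepsilon,
\]
which, combined with $\int P=1$, yields the desired uniform convergence. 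The main obstacle is the integrability of $P$ near the origin, which requires the refined pointwise estimates of Lemma \ref{prop:kernel esti. of q}(v) rather than just the tail bound \eqref{p ker est}, and the appropriate choice of bound depends sensitively on the dimension $d$.
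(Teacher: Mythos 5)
Your proof is correct and follows essentially the same route as the paper: unit mass of the kernel via the Mittag--Leffler/Fourier identity, $t$-independence of the $L_1$ norm via scaling, and the standard near/far splitting in an approximation-to-the-identity argument (the paper splits at a fixed radius $\delta$ in the original variable and lets $t\downarrow 0$ using the tail bound, whereas you rescale first and split at radius $R$ in the rescaled variable --- a cosmetic difference). Your explicit verification that $p(1,\cdot)\in L_1$ near the origin via Lemma \ref{prop:kernel esti. of q}(v) is a detail the paper leaves implicit, and is a welcome addition.
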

\begin{proof}
By (\ref{mit rela}),  for any $t>0$,
$$
\int_{\bR^d} p(t,y)dy =\cF p (0) =E_{\alpha,1}(0)=1.
$$
Also (\ref{eq:q scaling prop}) shows that $\|p(t,\cdot)\|_{L_1(\bR^d)}$ is a constant function of $t$.  For any $\delta >0$,
\begin{align*}
&\left|\int_{\bR^d} p(t,x-y)f(y)dy -f(x) \right|\\
&=\left|\int_{\bR^d} p(t,y)(f(x-y)-f(x))dy \right| \\
&\leq \int_{|y| < \delta} \left|p(t,y)(f(x-y)-f(x))\right|dy +\int_{|y| > \delta} \left|p(t,y)(f(x-y)-f(x))\right|dy  \\
&=: \cI(\delta) + \cJ(\delta).
\end{align*}
Since $f \in C_0^2(\bR^d)$,  for any $\varepsilon >0$, one can take a small $\delta$ so that $\cI(\delta) < \varepsilon$.
Moreover due to (\ref{p ker est}), for fixed $\delta>0$, $\cJ(\delta) \to 0$ as $t \downarrow 0$.
The corollary is proved.
\end{proof}
In the remainder of this section, we restrict the range of $\beta$ so that
\begin{equation}
                        \label{eq:assum KLP ineq}
\frac{1}{2}<\beta< \alpha+\frac{1}{2} .
\end{equation}
Thus by definition \eqref{c_0}, we have
$${c_1}:= 2 - c'_0=2-\frac{2\beta-1}{\alpha} \in(0,2).
$$

In the following section (i.e. Section 4) we prove  that if $g\in \bH^{\infty}_0(T,l_2)$ then the unique solution (in the sense of Definition \ref{def:solution space}) to  equation
(\ref{plp}) with the zero initial condition
is given  by the formula
\begin{equation}
       \label{eqn 9.3}
u=\int^t_0\int_{\bR^d}q_{\alpha,\beta}(t-s,x-y)g^k(s,y)dy dw^k_s.
\end{equation}
By Burkerholder-Davis-Gundy inequality
\begin{eqnarray}
   \label{bdc}
&&\|(-\Delta)^{c_1/2}u\|^p_{\bL_p(T)}\\
&&\leq N \bE \int_{\bR^d} \int^T_0 \left[\int^t_0 \left(\int_{\bR^d} (-\Delta)^{c_1/2}q_{\alpha,\beta}(t-s,x-y)g(s,y) dy\right)^2_{l_2} ds\right]^{p/2} dt dx. \nonumber
\end{eqnarray}
Our goal  is to control the right hand side of (\ref{bdc}) in terms of $\|g\|_{\bL_p(T,l_2)}$.  For this, we introduce some definitions as follows. Let $H$ be a Hilbert space. For $g\in C_{c}^{\infty}(\mathbb{R}^{d+1}; H)$,  define
$$
T_{t-s}^{\alpha,\beta}g(s,\cdot)(x):=\int_{\mathbb{R}^{d}}q_{\alpha,\beta}(t-s,x-y)g(s,y)dy.
$$
Note that, due to Lemma \ref{prop:kernel esti. of q}(v),  $(-\Delta)^{{c_1}/2}q_{\alpha,\beta}(t,\cdot)\in L_1 (\mathbb{R}^{d})$ for all $t>0$.
Therefore, for any $t>s$
$$
(-\Delta)^{{c_1}/2}T_{t-s}^{\alpha,\beta}g(s,\cdot)\in L_1 (\mathbb{R}^{d})
$$
and
$$
(-\Delta)^{{c_1}/2}T_{t-s}^{\alpha,\beta}g(s,\cdot)(x)=\int_{\mathbb{R}^{d}}(-\Delta)^{{c_1}/2}q_{\alpha,\beta}(t-s,x-y)g(s,y)dy.
$$
We  also define the sublinear operator $\cT$ as
\begin{align*}
\mathcal{T}g(t,x)
&:=\left[\int_{-\infty}^{t}\left| (-\Delta)^{{c_1}/2}T_{t-s}^{\alpha,\beta}g(s,\cdot)(x)\right| _{H}^{2}ds\right]^{1/2},
\end{align*}
where $|\cdot|_H$ denotes the given norm in the Hilbert space $H$. $\cT$ is sublinear due to the Minkowski inequality
\begin{equation}
                \label{sublinear}
\|f+g\|_{L_2((-\infty,t);H)}\leq \|f\|_{L_2((-\infty,t);H)}+\|g\|_{L_2((-\infty,t);H)}.
\end{equation}

Now we introduce a parabolic version of Littlewood-Paley inequality. The proof is given at the end of this section.

\begin{thm}
                                        \label{thm:L-P}

Let $H$ be a separable Hilbert space, $p\in[2,\infty)$,
$T\in(-\infty,\infty]$, and $\alpha\in(0,2)$. Assume that \eqref{eq:assum KLP ineq} holds.
Then  for any $g\in C_{c}^{\infty}(\mathbb{R}^{d+1};H)$,
\begin{equation}
\int_{\mathbb{R}^{d}}\int_{-\infty}^{T}\left|\mathcal{T}g(t,x)\right|^{p}dtdx\leq N\int_{\mathbb{R}^{d}}\int_{-\infty}^{T}|g(t,x)|_{H}^{p}dtdx,
                                              \label{eq:L-P ineq}
\end{equation}
where $N=N(d,p,\alpha,\beta)$.
\end{thm}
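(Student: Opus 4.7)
The proof proceeds in two stages. First we establish the case $p=2$ directly from Plancherel's theorem using the explicit Fourier representation of $q_{\alpha,\beta}$ in Lemma \ref{prop:kernel esti. of q}. Then we extrapolate to all $p\in[2,\infty)$ via a Calder\'on--Zygmund/square-function argument in a parabolic metric, leveraging the pointwise kernel bounds from the same lemma.

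For the $L_2$ estimate, fix $s<t$ and apply Plancherel in $x$ together with \eqref{mit rela} (taken with $\sigma=0$) to obtain
$$\int_{\bR^d}|(-\Delta)^{c_1/2}T_{t-s}^{\alpha,\beta}g(s,\cdot)(x)|_H^2\,dx=\int_{\bR^d}|\xi|^{2c_1}(t-s)^{2(\alpha-\beta)}\bigl|E_{\alpha,1+\alpha-\beta}(-|\xi|^{2}(t-s)^{\alpha})\bigr|^{2}|\hat g(s,\xi)|_H^2\,d\xi.$$
Integrating over $s<t<T$, interchanging the order of integration by Fubini, and substituting $r=|\xi|^{2/\alpha}(t-s)$ in the inner $t$-integral converts the prefactor into $|\xi|^{2c_1-(4(\alpha-\beta)+2)/\alpha}$, which equals $|\xi|^{0}=1$ thanks to the identity $c_1=2-(2\beta-1)/\alpha$. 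What remains is the dimensionless constant
$$C_{\alpha,\beta}:=\int_{0}^{\infty}r^{2(\alpha-\beta)}|E_{\alpha,1+\alpha-\beta}(-r^{\alpha})|^{2}\,dr,$$
which is finite exactly under hypothesis \eqref{eq:assum KLP ineq}: integrability at $r=0$ requires $2(\alpha-\beta)>-1$, i.e.\ $\beta<\alpha+1/2$, while at $r=\infty$ the Mittag--Leffler asymptotic $E_{\alpha,b}(-r^{\alpha})=O(r^{-\alpha})$ combined with $2\beta>1$ ensures integrability. This yields \eqref{eq:L-P ineq} with $p=2$.

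For $p\in(2,\infty)$, the plan is to interpret $\cT$ as a sublinear square-function operator and invoke a parabolic Calder\'on--Zygmund theorem in the space of homogeneous type $(\bR^{d+1},\rho,dt\,dx)$ endowed with the quasi-metric
$$\rho((t,x),(s,y)):=|t-s|+|x-y|^{2/\alpha},$$
dictated by the scaling identity \eqref{eq:q gamma scaling prop}; the $\rho$-ball of radius $r$ has Lebesgue measure comparable to $r^{1+d\alpha/2}$. Equivalently, one linearizes by introducing the $L_2(\bR;H)$-valued function $F_g(t,x):=[s\mapsto 1_{s<t}(-\Delta)^{c_1/2}T_{t-s}^{\alpha,\beta}g(s,\cdot)(x)]$, so that $\cT g(t,x)=\|F_g(t,x)\|_{L_2(\bR;H)}$ and the map $g\mapsto F_g$ is bounded from $L_2(\bR^{d+1};H)$ to $L_2(\bR^{d+1};L_2(\bR;H))$ by Stage one. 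Once an appropriate $L_2$-type H\"ormander condition on the scalar kernel $\cK_{\alpha,\beta}(t,x):=1_{t>0}(-\Delta)^{c_1/2}q_{\alpha,\beta}(t,x)$ is verified, standard parabolic vector-valued Calder\'on--Zygmund machinery upgrades the $L_2$-bound to an $L_p$-bound, which is exactly \eqref{eq:L-P ineq}.

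The core technical obstacle is precisely this smoothness condition, which amounts to showing that for every $(s_0,y_0)\in\bR^{d+1}$, $r>0$, and $(s,y)$ with $\rho((s,y),(s_0,y_0))\le r$,
$$\int_{\rho((t,x),(s_0,y_0))>2r}\left(\int_{\bR}\bigl|\cK_{\alpha,\beta}(t-s-s',x-y)-\cK_{\alpha,\beta}(t-s_0-s',x-y_0)\bigr|^{2}\,ds'\right)^{1/2}dt\,dx\le N$$
with $N$ independent of $(s_0,y_0,s,y,r)$. The scaling \eqref{eq:q gamma scaling prop} reduces this to the case $r=1$, after which the pointwise estimates \eqref{de ker est} for $D_t^{\sigma}D_x^{n}(-\Delta)^{c_1/2}q_{\alpha,\beta}(1,\cdot)$ with $\gamma=c_1$ and $n,\sigma\in\{0,1\}$, namely the decay $|x|^{-d+2-c_1-n}\wedge|x|^{-d-c_1-n}$, supply the integrable control of the kernel difference needed on each parabolic annulus. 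Combining the verified H\"ormander condition with the $L_2$ bound from Stage one then delivers \eqref{eq:L-P ineq} for the full range $p\in[2,\infty)$.
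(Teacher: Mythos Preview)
Your $L_2$ argument is essentially the paper's Lemma~\ref{lem:L2 result}, carried out via the clean substitution $r=|\xi|^{2/\alpha}(t-s)$; note only that when $T<\infty$ the upper limit of the $r$-integral is $|\xi|^{2/\alpha}(T-s)$ rather than $\infty$, so you are tacitly using $C_{\alpha,\beta}$ as an upper bound for the truncated integral, which is fine since the integrand is nonnegative.

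The $p>2$ step, however, has a genuine gap. Your linearization $g\mapsto F_g$, with $F_g(t,x)=[s'\mapsto 1_{s'<t}\,\cK_{\alpha,\beta}(t-s',\cdot)*g(s',\cdot)(x)]$, is \emph{not} a Calder\'on--Zygmund operator with a locally integrable $\mathcal{L}(H;L_2(\bR;H))$-valued kernel: for fixed output point $(t,x)$ and input point $(s,y)$, the contribution to $F_g(t,x)(s')$ is concentrated at the single value $s'=s$, so the formal kernel carries a Dirac mass in the auxiliary variable and its $H\to L_2(\bR;H)$ operator norm is not defined. The H\"ormander condition you wrote (with the mysterious integration in $s'$) therefore cannot be given a consistent meaning, and no off-the-shelf vector-valued CZ theorem applies. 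The time shift built into $\cT$---the fact that the convolution at parameter $\tau$ acts on $g(t-\tau,\cdot)$ rather than $g(t,\cdot)$---is exactly what separates this square function from the classical ones covered by Stein's framework.

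The paper confronts this by working directly with the sublinear operator $\cT$. After the $L_2$ bound, it fixes the parabolic cube $Q_0$, splits $g=g_1+g_2+g_3$ by support (near $Q_0$ in time; far in time but near in space; far in both), and through Lemmas~\ref{lem:second lemma}--\ref{lem:forth lemma} controls the mean-square oscillation of $\cT g$ over $Q_0$ by $\bM_t\bM_x|g|_H^2(t,x)$. For $g_1$ the $L_2$ estimate does the work; for $g_2$ and $g_3$ the kernel decay \eqref{de ker est} is fed through the radial integration-by-parts identity \eqref{eq:integration by parts}, and for $g_3$ a Poincar\'e inequality on $Q_0$ reduces the oscillation to derivatives of $\cT g$. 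This yields the pointwise sharp-function bound $(\cT g)^{\#}\le N(\bM_t\bM_x|g|_H^2)^{1/2}$, after which Fefferman--Stein and Hardy--Littlewood finish. The kernel estimates you cite are indeed the raw input, but the mechanism that replaces the unavailable CZ structure is precisely this hands-on sharp-function decomposition; your proposal stops short of it.
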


\begin{rem}
                    \label{rem:extension of cT}

By Theorem \ref{thm:L-P}, the operator
$\mathcal{T}$ can be  continuously extended onto $L_{p}(\mathbb{R}^{d+1} ;H)$.
We denote this extension by the same notation $\mathcal{T}$.
\end{rem}

\begin{rem}
         \label{rem 9.3}
Take $u$ and $g$ from (\ref{eqn 9.3}). Extend $g(t)=0$ for $t\leq
0$.   Note that the right hand side of (\ref{bdc}) is $\bE
\int_{\mathbb{R}^{d}}\int_{-\infty}^{T}\left|\mathcal{T}g(t,x)\right|^{p}dtdx$.
Thus, using (\ref{eq:L-P ineq}) (actually Remark \ref{rem:extension
of cT}) for each  $\omega$ and taking the expectation,  we get
$$
\|(-\Delta)^{c_1/2}u\|^p_{\bL_p(T)}\leq N
\|g\|^p_{\bL_p(T,l_2)}.
$$
\end{rem}

First we prove Theorem \ref{thm:L-P} for $p=2$. The following lemma is a slight extension of \cite[Lemma 3.8]{CKK},  which is proved only for $\alpha\in (0,1)$ with constant $N$ depending  also on $T$.  For the proof  we use the following well-known property of the Mittag-Leffler function: if $\alpha\in (0,2)$ and $b\in \bC$, then  there exist positive constants  $\varepsilon=\varepsilon(\alpha)$
and $C=C(\alpha,b)$ such that
\begin{equation}
            \label{eqn 8.19.2}
|E_{\alpha,b}(z)|\leq C(1 \wedge  |z|^{-1}), \quad  \quad  \pi-\varepsilon \leq |\arg (z)|\leq \pi.
\end{equation}
See \cite[Lemma 3.1]{SY} for the proof of (\ref{eqn 8.19.2}).

\begin{lem}
                        \label{lem:L2 result}
Suppose that the assumptions in Theorem \ref{thm:L-P} hold.
Then for any $T\in (-\infty, \infty]$ and $g\in C_{c}^{\infty}(\mathbb{R}^{d+1};H)$,
\begin{equation}
             \label{eqn 8.19.1}
\int_{\mathbb{R}^{d}}\int_{-\infty}^{T}|\mathcal{T}g(t,x)|^{2}dtdx\leq N\int_{\mathbb{R}^{d}}\int_{-\infty}^{T}\left| g(t,x)\right| _{H}^{2}dtdx,
\end{equation}
where $N=N(d,p,\alpha,\beta)$ is independent of $T$.
 \end{lem}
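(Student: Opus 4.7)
The plan is to reduce the estimate to Plancherel's theorem in $x$, using the explicit Fourier representation of $q_{\alpha,\beta}$ given in Lemma \ref{prop:kernel esti. of q}(iv), and then to verify that the resulting one-dimensional integral in $t$ is uniformly bounded. Because $\mathcal{T}$ is defined via an $L_2$-norm in $s$, and the left-hand side of \eqref{eqn 8.19.1} is an $L_2$-norm in $(t,x)$, Plancherel's theorem is the natural tool and should give a sharp bound that does not depend on $T$.

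Concretely, I would first write
$$\mathcal{F}_x\bigl[(-\Delta)^{c_1/2} T^{\alpha,\beta}_{t-s} g(s,\cdot)\bigr](\xi) = |\xi|^{c_1} (t-s)^{\alpha-\beta} E_{\alpha, 1+\alpha-\beta}(-|\xi|^2 (t-s)^\alpha)\, \hat g(s,\xi),$$
using \eqref{mit rela} with $\sigma=0$ together with the convolution theorem. Applying Plancherel in $x$ and Fubini to switch the order of the $s$-, $t$- and $\xi$-integrals, one gets
\begin{align*}
\int_{\mathbb{R}^d}\int_{-\infty}^T |\mathcal{T}g(t,x)|^2\,dt\,dx
 = \int_{\mathbb{R}^d}\int_{-\infty}^T |\hat g(s,\xi)|_H^2\, J(s,\xi)\,ds\,d\xi,
\end{align*}
where
$$
J(s,\xi) := \int_0^{T-s} |\xi|^{2c_1}\, r^{2(\alpha-\beta)}\,\bigl|E_{\alpha,1+\alpha-\beta}(-|\xi|^2 r^\alpha)\bigr|^2\, dr \le \int_0^\infty (\cdots)\, dr.
$$

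The key step is to perform the change of variable $u = |\xi|^2 r^\alpha$. A direct computation of the exponents shows that the resulting power of $|\xi|$ is
$$2c_1 - \tfrac{4(\alpha-\beta)+2}{\alpha} = \left(4 - \tfrac{4\beta-2}{\alpha}\right) - \tfrac{4\alpha-4\beta+2}{\alpha} = 0,$$
so the $|\xi|$-dependence disappears precisely because $c_1 = 2-(2\beta-1)/\alpha$. One is left with
$$J(s,\xi) \le \frac{1}{\alpha}\int_0^\infty u^{(\alpha-2\beta+1)/\alpha}\,|E_{\alpha,1+\alpha-\beta}(-u)|^2\,du.$$
By \eqref{eqn 8.19.2}, $|E_{\alpha,1+\alpha-\beta}(-u)|^2 \le C(1\wedge u^{-2})$, so integrability near $u=0$ requires $(\alpha-2\beta+1)/\alpha > -1$, i.e. $\beta < \alpha + 1/2$, and integrability at $u=\infty$ requires $(\alpha-2\beta+1)/\alpha - 2 < -1$, i.e. $\beta > 1/2$. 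Both are guaranteed by assumption \eqref{eq:assum KLP ineq}, yielding $J(s,\xi) \le N(\alpha,\beta)$ uniformly in $(s,\xi)$. Applying Plancherel once more to return from $\hat g$ to $g$ completes the proof.

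The only delicate step is the verification that the two-sided range $1/2 < \beta < \alpha + 1/2$ is \emph{exactly} what makes the $u$-integral converge; this is where assumption \eqref{eq:assum KLP ineq} is used in an essential (non-improvable) way. Everything else is elementary manipulation of Plancherel's theorem and Fubini, together with the scaling in Lemma \ref{prop:kernel esti. of q}(vi) (implicit in the substitution). Uniformity in $T$ is automatic since we bound $J$ by the integral from $0$ to $\infty$.
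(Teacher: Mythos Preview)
Your argument is correct, and it is somewhat cleaner than the paper's. The paper also uses Parseval in $x$ and the Fourier representation \eqref{mit rela}, but it organizes the remaining estimate differently: it first restricts to $g$ supported in $t>0$ and $T=1$, splits into the cases $|\xi|\le 1$ and $|\xi|\ge 1$, and for large $|\xi|$ splits the $t$-integral at $|\xi|^{-2/\alpha}$ to estimate the Mittag-Leffler function by $1$ and by $|z|^{-1}$ separately; it then recovers general $T$ by the scaling relation \eqref{8.20.3}--\eqref{8.20.4} and general $g$ by translation. Your single substitution $u=|\xi|^2 r^\alpha$ exploits the fact that $c_1$ was chosen precisely so that the power of $|\xi|$ cancels, which collapses all of these steps (the case splitting, the normalization to $T=1$, and the scaling argument) into one line and gives the $T$-independence for free. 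The paper's version has the minor advantage that it keeps the near-zero estimate for $|\xi|\le 1$ separated from the asymptotic $|z|^{-1}$ decay, which makes the roles of the two endpoints of \eqref{eq:assum KLP ineq} slightly more visible, but your computation identifies them just as clearly through the convergence of the $u$-integral at $0$ and $\infty$.
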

\begin{proof}

{\bf{Step 1}}.  First, assume $g(t,x)=0$ for $t \leq 0$.   In this case we may assume $T>0$ because the left hand side of (\ref{eqn 8.19.1}) is zero if $T\leq 0$.

 We prove (\ref{eq:L-P ineq}) for $T=1$.   Since  $g(t,x)=\cT g(t,x)=0$ for $t \leq 0$,
   by Parseval's identity and (\ref{mit rela}),
  \begin{align*}
 & \int_{\mathbb{R}^{d}}\int_{-\infty}^{1}|\mathcal{T}g(t,x)|^{2}dtdx\\
 & =\int_{0}^{1}\int_{0}^{t}\int_{\bR^d}|\xi|^{2{c_1}}\left|\cF\left\{ q_{\alpha,\beta}(t-s,\cdot)\right\} (\xi)\right|^{2}\left| \mathcal{F}\{g\}(s,\xi)\right| _{H}^{2}d\xi dsdt\\
 & \leq \int_{|\xi|\leq 1}\int_{0}^{1}\left| \mathcal{F}\{g\}(s,\xi)\right| _{H}^{2}\left(\int_{s}^{1}|\xi|^{2{c_1}}\left|t^{\alpha-\beta}E_{\alpha,1-\beta+\alpha}(-|\xi|^{2}t^{\alpha})\right|^{2}dt\right)dsd\xi\\
 & \quad+\int_{|\xi|\geq1}\int_{0}^{1}\left| \mathcal{F}\{g\}(s,\xi)\right| _{H}^{2}\left(\int_{s}^{1}|\xi|^{2{c_1}}
 \left|t^{\alpha-\beta}E_{\alpha,1-\beta+\alpha}(-|\xi|^{2}t^{\alpha})\right|^{2}dt\right)dsd\xi \\
 & \leq N\int_{0}^{1}\int_{\mathbb{R}^{d}}\left| g(t,x)\right| _{H}^{2}dxdt\\
 & \quad+N\int_{|\xi|\geq1}\int_{0}^{1}\left| \mathcal{F}\{g\}(s,\xi)\right| _{H}^{2}\left(\int_{s}^{1}|\xi|^{2{c_1}}
 \left|t^{\alpha-\beta}E_{\alpha,1-\beta+\alpha}(-|\xi|^{2}t^{\alpha})\right|^{2}dt\right)dsd\xi,
\end{align*}
where the last inequality is due to (\ref{eqn 8.19.2}) and the condition $\alpha-\beta>-1/2$. Thus to prove our assertion for $T=1$ we only need to prove
$$
\sup_{\xi} \left(1_{|\xi|\geq 1}
|\xi|^{2c_1}\int^{1}_0\left|t^{\alpha-\beta}E_{\alpha,1-\beta+\alpha}(-|\xi|^{2}t^{\alpha})\right|^{2}dt\right)
<\infty.
$$
   By (\ref{eqn 8.19.2}), if $|\xi|\geq 1$ (recall we assumed
$\beta>1/2$ in this section),
\begin{align*}
 &  |\xi|^{2c_1}\int^{1}_0\left|t^{\alpha-\beta}E_{\alpha,1-\beta+\alpha}(-|\xi|^{2}t^{\alpha})\right|^{2}dt\\
 & \leq N |\xi|^{2{c_1}}\int_{0}^{|\xi|^{-2/\alpha}}  t^{2(\alpha-\beta)} dt
 +N|\xi|^{2c_1}\int_{|\xi|^{-2/\alpha}}^{1} \left|\frac{t^{\alpha-\beta}}{|\xi|^{2}t^{\alpha}}\right|^{2} dt\\
 &\leq N |\xi|^{2(c_1-2+\frac{2\beta-1}{\alpha})}+N|\xi|^{2c_1-4}\left(|\xi|^{2(\frac{2\beta-1}{\alpha})}-1\right)\leq 3N.
\end{align*}
Therefore, the case $T=1$ is proved.

 For arbitrary $T>0$,
 we use  (\ref{eq:q gamma scaling prop}), which implies
\begin{equation}
          \label{8.20.3}
(-\Delta)^{{c_1}/2}q_{\alpha,\beta}(T(t-s),x)
=T^{-\frac{\alpha(d+{c_1})}{2}+\alpha-\beta}(-\Delta)^{{c_1}/2}q_{\alpha,\beta}(t-s,T^{-\frac{\alpha}{2}}x),
\end{equation}
and consequently
\begin{equation}
         \label{8.20.4}
\mathcal{T}g(Tt,x)=\mathcal{T}\tilde{g}(t,T^{-\frac{\alpha}{2}}x),
\end{equation}
where $\tilde{g}(t,x)=g(Tt,T^{\frac{\alpha}{2}}x)$.
By using the result proved for $T=1$,
\begin{align*}
\int_{\mathbb{R}^{d}}\int_{-\infty}^{T}|\mathcal{T}g(t,x)|^{2}dtdx & =T^{1+\frac{\alpha d}{2}}\int_{\mathbb{R}^{d}}\int_{-\infty}^{1}|\mathcal{T}\tilde{g}(t,x)|^{2}dtdx\\
 & \leq NT^{1+\frac{\alpha d}{2}}\int_{\mathbb{R}^{d}}\int_{-\infty}^{1}|\tilde{g}(t,x)|^{2}dtdx\\
 & =N\int_{\mathbb{R}^{d}}\int_{-\infty}^{T}|g(t,x)|^{2}dtdx.
\end{align*}
Thus \eqref{eq:L-P ineq} holds for all $T>0$ with a constant independent of $T$. It follows that
 \eqref{eq:L-P ineq} also holds for $T= \infty$.

{\bf{Step 2}}. General case. Take $a\in \bR$ so that $g(t,x)=0$ for $t\leq a$. Then obviously, for $\bar{g}(t,x):=g(t+a,x)$ we have $\bar{g}(t)=0$ for $t\leq 0$. Thus it is enough to apply the result for Step 1 with $\bar{g}$ and $T-a$ in place of $g$ and $T$ respectively.
\end{proof}

For a real-valued measurable function $h$ on $\mathbb{R}^{d}$, define
the maximal function
$$
\bM_xh(x):=\sup_{r>0}\frac{1}{|B_{r}(x)|}\int_{B_{r}(x)}|h(y)|dy=\sup_{r>0}\aint_{B_{r}(x)}|h(y)|dy.
$$
The Hardy-Littlewood maximal theorem says
\begin{equation}
       \label{hl}
\|\bM_x h\|_{L_p(\bR^d)}\leq N(d,p)       \|h\|_{L_p(\bR^d)}, \quad  \forall p>1.
\end{equation}
       For a function $h(t,x)$, set
$$
\bM_xh(t,x)=\bM_x\left(h(t,\cdot)\right)(x),\quad\bM_{t}h(t,x)=\bM_{t}\left(h(\cdot,x)\right)(t),
$$
and
$$
\bM_t \bM_x h(t,x)=\bM_t\left(\bM_xh(\cdot,x)\right)(t).
$$
To evaluate $\bM_t \bM_x h(t,x)$, we  first   fix $t$ and estimate $(\bM_x h(t,\cdot))(x)$. After this,  we fix $x$ and regard  $(\bM_x h(t,\cdot))(x)$
as a function of $t$ only  to estimate the maximal function with respect to $t$.

Denote

\begin{equation}
Q_{0}:=[-2^{\frac{2}{\alpha}},0]\times[-1,1]^{d}.\label{eq:Q zero}
\end{equation}

\begin{lem}
                        \label{lem:first lemma}

Let $g\in C_{c}^{\infty}(\mathbb{R}^{d+1};H)$
and assume that $g=0$ outside of $[-4^{\frac{2}{\alpha}},4^{\frac{2}{\alpha}}]\times B_{3d}$.
Then for $(t,x)\in Q_{0}$,
$$
\int_{Q_{0}}\left|\mathcal{T}g(s,y)\right|^{2}dsdy\leq N\bM_{t}\bM_x\left| g\right| _{H}^{2}(t,x),
$$
where $N=N(d,\alpha,\beta)$.
\end{lem}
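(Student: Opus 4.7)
The strategy is to apply the $L_2$ bound of Lemma \ref{lem:L2 result} on the whole space-time to estimate the left-hand side, then compare the resulting $L_2$-norm of $g$ to the iterated maximal function at an arbitrary point $(t,x)\in Q_0$ by exploiting the compact support of $g$ and the fact that $Q_0$ is quantitatively close to that support.

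\textbf{Step 1: reduce to the global $L_2$-estimate.} Since $Q_0\subset\bR^{d+1}$, we trivially have
$$
\int_{Q_0}|\mathcal{T}g(s,y)|^2\,ds\,dy\leq \int_{\bR^d}\int_{-\infty}^{\infty}|\mathcal{T}g(s,y)|^2\,ds\,dy.
$$
Extending $g$ trivially (it is already compactly supported), Lemma \ref{lem:L2 result} with $T=\infty$ yields
$$
\int_{\bR^d}\int_{-\infty}^{\infty}|\mathcal{T}g(s,y)|^2\,ds\,dy\leq N(d,\alpha,\beta)\int_{\bR^d}\int_{-\infty}^{\infty}|g(s,y)|_H^2\,ds\,dy.
$$

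\textbf{Step 2: localize using the support assumption.} Because $g$ vanishes outside $[-4^{2/\alpha},4^{2/\alpha}]\times B_{3d}$,
$$
\int_{\bR^d}\int_{-\infty}^{\infty}|g(s,y)|_H^2\,ds\,dy=\int_{-4^{2/\alpha}}^{4^{2/\alpha}}\int_{B_{3d}}|g(s,y)|_H^2\,dy\,ds.
$$

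\textbf{Step 3: pass to the spatial maximal function.} Fix any $x\in[-1,1]^d$, so $|x|\leq\sqrt d$. Then $B_{3d}\subset B_{3d+\sqrt d}(x)\subset B_{4d}(x)$, and hence
$$
\int_{B_{3d}}|g(s,y)|_H^2\,dy\leq |B_{4d}|\cdot\frac{1}{|B_{4d}(x)|}\int_{B_{4d}(x)}|g(s,y)|_H^2\,dy\leq N(d)\,\mathbb{M}_x|g|_H^2(s,x).
$$

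\textbf{Step 4: pass to the temporal maximal function.} Fix any $t\in[-2^{2/\alpha},0]$ and set $r:=4^{2/\alpha}+2^{2/\alpha}$, so that $[-4^{2/\alpha},4^{2/\alpha}]\subset[t-r,t+r]$. Viewing $s\mapsto\mathbb{M}_x|g|_H^2(s,x)$ as a nonnegative function of $s$, we obtain
$$
\int_{-4^{2/\alpha}}^{4^{2/\alpha}}\mathbb{M}_x|g|_H^2(s,x)\,ds\leq 2r\cdot\frac{1}{2r}\int_{t-r}^{t+r}\mathbb{M}_x|g|_H^2(s,x)\,ds\leq N(\alpha)\,\mathbb{M}_t\mathbb{M}_x|g|_H^2(t,x).
$$

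\textbf{Step 5: assemble.} Concatenating the four displays yields
$$
\int_{Q_0}|\mathcal{T}g(s,y)|^2\,ds\,dy\leq N(d,\alpha,\beta)\,\mathbb{M}_t\mathbb{M}_x|g|_H^2(t,x)
$$
for every $(t,x)\in Q_0$, which is the claim. There is no genuine obstacle here: the whole point is that Lemma \ref{lem:L2 result} already does the heavy (Fourier/Mittag-Leffler) work, and the spatial/temporal comparisons in Steps 3--4 only use that the support of $g$ is contained in a set whose diameter is comparable to $\mathrm{diam}(Q_0)$, so inclusion of the support inside a ball/interval centered at an arbitrary point of $Q_0$ costs only a constant depending on $d$ and $\alpha$.
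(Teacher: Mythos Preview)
Your proof is correct and follows essentially the same approach as the paper's: apply the $L_2$ bound of Lemma~\ref{lem:L2 result} and then use the support of $g$ together with the geometry of $Q_0$ to dominate the resulting $L_2$-norm by the iterated maximal function. The only cosmetic difference is that the paper applies Lemma~\ref{lem:L2 result} with $T=0$ (so the time integral on the right runs only over $[-4^{2/\alpha},0]$), whereas you use $T=\infty$; this is immaterial since the extra time interval is harmlessly absorbed in Step~4.
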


\begin{proof}
By Lemma \ref{lem:L2 result},
$$
\int_{Q_{0}}\left|\mathcal{T}g(s,y)\right|^{2}dsdy\leq\int_{-4^{\frac{2}{\alpha}}}^{0}\int_{B_{3d}}\left| g(s,y)\right| _{H}^{2}dyds.
$$
For any $(t,x)\in Q_{0}$ and $y\in B_{3d}$, since $|x-y|\leq|x|+|y|\leq\sqrt{d}+3d\leq4d$,
we obtain
\begin{align*}
\int_{-4^{\frac{2}{\alpha}}}^{0}\int_{B_{3d}}\left| g(s,y)\right| _{H}^{2}dyds
& \leq\int_{-4^{\frac{2}{\alpha}}}^{0}\int_{|x-y|\leq4d}\left| g(s,y)\right| _{H}^{2}dyds\\
& \leq N\int_{-4^{\frac{2}{\alpha}}}^{0}\bM_x\left| g(s,x)\right| _{H}^{2}ds\\
& \leq N\bM_{t}\bM_x\left| g\right| _{H}^{2}(t,x).
\end{align*}
The lemma is proved.
\end{proof}

Here is a generalization of Lemma \ref{lem:first lemma}.
\begin{lem}
                        \label{lem:second lemma}

Let $g\in C_{c}^{\infty}(\mathbb{R}^{d+1};H)$
and assume that $g(t,x)=0$ for $|t|\geq4^{\frac{2}{\alpha}}$. Then for any $(t,x)\in Q_{0}$,
$$
\int_{Q_{0}}\left|\mathcal{T}g(s,y)\right|^{2}dsdy\leq N(d,\alpha,\beta)\bM_{t}\bM_x\left| g\right| _{H}^{2}(t,x).
$$

\end{lem}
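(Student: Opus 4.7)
The approach is to spatially split $g$ into a near-origin piece (handled by Lemma \ref{lem:first lemma}) and a countable family of far-field annular pieces (estimated pointwise using the kernel bounds of Lemma \ref{prop:kernel esti. of q}), and then sum.

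Fix a constant $M=M(d,\alpha)>3d$ large enough that for every $(s,y)\in Q_0$, every $r\in[-4^{2/\alpha},s]$, and every $z$ with $|z|\geq M/2$, one has $|y-z|^2\geq(s-r)^{\alpha}$. This is possible because on $Q_0$ the increment $s-r$ is bounded by a constant that depends only on $\alpha$. Choose a smooth partition of unity $\{\eta_n\}_{n\geq0}$ on $\mathbb{R}^d$ with $\eta_0$ supported in $B_M$ and $\eta_n$ supported in $\{x:M\cdot 2^{n-1}\leq|x|\leq M\cdot 2^{n+1}\}$ for $n\geq 1$. Set $g_n:=g\eta_n$; each $g_n$ still vanishes for $|t|\geq 4^{2/\alpha}$ and $\sum_n g_n=g$. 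By the sublinearity of $\mathcal{T}$ (cf.\ (\ref{sublinear})) and Minkowski's inequality in $L_2(Q_0)$,
$$
\left(\int_{Q_0}|\mathcal{T}g|^2\,dsdy\right)^{1/2}\leq\sum_{n\geq 0}\left(\int_{Q_0}|\mathcal{T}g_n|^2\,dsdy\right)^{1/2}.
$$

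For $n=0$ the piece $g_0$ is compactly supported in $[-4^{2/\alpha},4^{2/\alpha}]\times B_M$, and a verbatim repetition of the proof of Lemma \ref{lem:first lemma} with $B_{3d}$ replaced by $B_M$ yields $\int_{Q_0}|\mathcal{T}g_0|^2\leq N\bM_t\bM_x|g|_H^2(t,x)$, where $N=N(d,\alpha,\beta)$ since $M$ depends only on $d,\alpha$. For $n\geq 1$ I invoke parts (v) and (vi) of Lemma \ref{prop:kernel esti. of q}: in the regime $|y-z|^2\geq(s-r)^{\alpha}$ arranged by the choice of $M$,
$$
|(-\Delta)^{{c_1}/2}q_{\alpha,\beta}(s-r,y-z)|\leq N(s-r)^{\alpha-\beta}|y-z|^{-d-{c_1}}\leq N(s-r)^{\alpha-\beta}(M\cdot 2^n)^{-d-{c_1}}
$$
for $z\in\mathrm{supp}(\eta_n)$ and $(s,y)\in Q_0$. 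Applying the Cauchy--Schwarz inequality in $z$ over $\mathrm{supp}(\eta_n)$ (whose Lebesgue measure is $\leq C(M\cdot 2^n)^d$), then integrating in $r\in[-4^{2/\alpha},s]$ using that the assumption $\beta<\alpha+\tfrac12$ makes $(s-r)^{2(\alpha-\beta)}$ integrable, and finally noting that for $(t,x)\in Q_0$ one has $\mathrm{supp}(\eta_n)\subset B_{CM\cdot 2^n}(x)$, one arrives at
$$
\int_{Q_0}|\mathcal{T}g_n|^2\,dsdy\leq N\cdot 2^{-2n{c_1}}\bM_t\bM_x|g|_H^2(t,x).
$$

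Since ${c_1}=2-(2\beta-1)/\alpha>0$ by the standing assumption (\ref{eq:assum KLP ineq}), the geometric series $\sum_{n\geq 1}2^{-n{c_1}}$ converges, and summing the square roots from the two displays above and then squaring yields the required bound. The principal technical subtlety is the choice of $M$: one must take $M$ large enough (depending on $\alpha$ through the bound on $s-r$) to remain in the ``good'' kernel regime $|y-z|^2\geq(s-r)^{\alpha}$ on every dyadic shell $n\geq 1$. Otherwise one would be forced to use the alternative bound $(s-r)^{-\beta}|y-z|^{-d+2-{c_1}}$ from Lemma \ref{prop:kernel esti. of q}(v), which generates a non-integrable singularity $(s-r)^{-2\beta}$ at $r=s$ under the present standing assumption $\beta>1/2$.
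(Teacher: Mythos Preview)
Your proof is correct. It differs from the paper's argument in how the spatially far piece is treated. The paper makes a single near/far cut-off at radius $\sim d$ (via a smooth $\zeta$ supported in $B_{3d}$): the near part is handled by Lemma~\ref{lem:first lemma}, and for the far part it applies the radial integration-by-parts identity~(\ref{eq:integration by parts}) together with the derivative bound~(\ref{de ker est}) (with $n=1$) to obtain the pointwise estimate
\[
\left|(-\Delta)^{c_1/2}T^{\alpha,\beta}_{s-r}g(r,\cdot)(y)\right|_H \leq N(s-r)^{\alpha-\beta}\,\bM_x|g|_H(r,x),
\]
which is then squared (using $(\bM_x|g|_H)^2\leq\bM_x|g|_H^2$) and integrated. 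Your dyadic decomposition replaces that integration by parts with an explicit geometric sum over annuli, using the far-field bound on the kernel itself rather than on its gradient, and your Cauchy--Schwarz step in $z$ produces $\bM_x|g|_H^2$ directly. Your route is more elementary in that it avoids~(\ref{eq:integration by parts}), at the price of the extra arrangement you note: $M$ must be chosen large enough to keep $(s-r)^{-\alpha/2}|y-z|\geq 1$ on every annulus. The paper's derivative bound $((s-r)^{-\alpha/2}\rho)^{-d-1-c_1}$ turns out to dominate in both kernel regimes, so no such choice is needed there. Both routes lead to the same $(s-r)^{2(\alpha-\beta)}$ weight in $r$, whose integrability under $\beta<\alpha+\tfrac12$ closes the argument.
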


\begin{proof}
Take $\zeta\in C_{c}^{\infty}(\mathbb{R}^{d})$ such that $\zeta=1$
in $B_{2d}$ and $\zeta=0$ outside $B_{3d}$.
Recall that $\cT$ is a sublinear operator, and therefore
$$
\mathcal{T}g\leq\mathcal{T}(\zeta g)+\mathcal{T}( (1-\zeta)g).
$$
Since $\mathcal{T}(\zeta g)$ can be estimated by Lemma \ref{lem:first lemma},
we may assume that $g(t,x)=0$ for $x\in B_{2d}$. Let $0>s>r>-4^{\frac{2}{\alpha}}$.
Then by \eqref{eq:q gamma scaling prop},
\begin{align}
 & \left| (-\Delta)^{{c_1}/2}T_{s-r}^{\alpha,\beta}g(r,\cdot)(y)\right| _{H}\nonumber \\
 & \leq(s-r)^{-\frac{\alpha d}{2}+\alpha-\beta-\frac{\alpha{c_1}}{2}}\int_{\mathbb{R}^{d}}\left|(-\Delta)^{{c_1}/2}q_{\alpha,\beta}(1,(s-r)^{-\frac{\alpha}{2}}y)\right|\left| g(r,y-z)\right| _{H}dz\nonumber \\
                    \label{eq:5-12-1}
 & =(s-r)^{-\frac{\alpha d}{2}-\frac{1}{2}}\int_{\mathbb{R}^{d}}\left|(-\Delta)^{{c_1}/2}q_{\alpha,\beta}(1,(s-r)^{-\frac{\alpha}{2}}y)\right|\left| g(r,y-z)\right| _{H}dz.
\end{align}

To proceed further we use the following integration by parts formula :  if $F$ and $G$ are smooth enough then for any $0<\varepsilon <R<\infty$,
\begin{align}
\int_{\epsilon\leq|\eta|\leq R}F(\eta)G(|\eta|)d\eta & =-\int_{\epsilon}^{R}G'(\rho)\left[\int_{|\eta|\leq\rho}F(\eta)d\eta\right]d\rho\nonumber \\
 & \qquad+G(R)\int_{|\eta|\leq R}F(\eta)d\eta-G(\epsilon)\int_{|\eta|\leq\epsilon}F(\eta)d\eta.
                    \label{eq:integration by parts}
\end{align}
Indeed, (\ref{eq:integration by parts}) is obtained by applying integration by parts to
\begin{align*}
\int_\varepsilon^R G(\rho)\frac{d}{d\rho} \left(\int_{B_\rho(0)}F(z)\, dz \right)d\rho
&=\int_\varepsilon^R G(\rho)\left(\int_{\partial B_\rho(0)} F(s) \, dS_{\rho} \right) d\rho  \\
&=\int_{R \geq |z| \geq \varepsilon} F(z) G(|z|)\,dz.
\end{align*}

Observe
that if $(s,y)\in Q_{0}$ and $\rho>1$, then
\begin{equation}
|x-y|\leq2d,\quad B_{\rho}(y)\subset B_{2d+\rho}(x)\subset B_{(2d+1)\rho}(x),\label{eq:2}
\end{equation}
whereas if $\rho\leq1$ then for $z\in B_{\rho}(0)$,
$|y-z|\leq\sqrt{d}+1\leq2d$ and thus $g(r,y-z)=0$.
Therefore by (\ref{eq:integration by parts}) and(\ref{de ker est}),
\begin{align*}
 & (s-r)^{-\frac{\alpha d}{2}-\frac{1}{2}}\int_{\mathbb{R}^{d}}\left|(-\Delta)^{{c_1}/2}q_{\alpha,\beta}\left(1,(s-r)^{-\frac{\alpha}{2}}y\right)\right|\left| g(r,y-z)\right| _{H}dz\\
 & \leq N(s-r)^{-\frac{\alpha d}{2}-\frac{1}{2}-\frac{\alpha}{2}}\int_{1}^{\infty} \left((s-r)^{-\frac{\alpha}{2}}\rho\right)^{-d-1-{c_1}}
 \left[\int_{|z|\leq\rho}\left| g(r,y-z)\right|_{H}dz\right]d\rho\\
 & \leq N(s-r)^{\alpha-\beta}\int_{1}^{\infty}\rho^{-d-1-{c_1}}\left[\int_{|z|\leq\rho}| g(r,y-z)| _{H}dz\right]d\rho\\
 & \leq N(s-r)^{\alpha-\beta}\int_{1}^{\infty}\rho^{-1-{c_1}}\left[\aint_{B_{3\rho}(x)}\left| g(r,z)\right| _{H}dz\right]d\rho\\
 & \leq N(s-r)^{\alpha-\beta}\bM_x\left| g\right| _{H}(r,x).
\end{align*}
Then due to the fact that $(\bM_x\left| g\right| _{H})^{2}\leq\bM_x\left| g\right| _{H}^{2}$,
\begin{align*}
\int_{Q_{0}}
&\left|\mathcal{T}g(s,y)\right|^{2}dsdy=\int_{Q_{0}}\int_{-\infty}^{s}\left| (-\Delta)^{{c_1}/2}T_{s-r}^{\alpha,\beta}g(r,\cdot)(y)\right| _{H}^{2}drdsdy\\
&\leq N\int_{Q_{0}}\int_{-4^{\frac{2}{\alpha}}}^{s}\left[\bM_x\left| g\right| _{H}^{2}(r,x)(s-r)^{2(\alpha-\beta)}\right]drds\\
&\leq N\int_{-4^{\frac{2}{\alpha}}}^{0}\left(\int_{r}^{0}(s-r)^{2(\alpha-\beta)}ds\right)\bM_x\left| g\right| _{H}^{2}(r,x)dr\leq N\bM_{t}\bM_x\left| g\right| _{H}^{2}(t,x).
\end{align*}
The lemma is proved.
\end{proof}

\begin{lem}
                        \label{lem:third lemma}
Let $g\in C_{c}^{\infty}(\mathbb{R}^{d+1};H)$ and  assume $g(t,x)=0$
outside of $(-\infty,-3^{\frac{2}{\alpha}})\times B_{3d}$. Then for
any $(t,x)\in Q_{0}$,
$$
\int_{Q_{0}}\left|\mathcal{T}g(s,y)\right|^{2}dsdy\leq N\bM_{t}\bM_x\left| g\right| _{H}^{2}(t,x),
$$
where $N=N(d,\alpha,\beta)$.
\end{lem}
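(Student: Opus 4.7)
The plan is to follow the overall strategy of Lemma \ref{lem:second lemma}, but the different support assumption on $g$ forces a quantitatively different estimate. Because $g$ is now supported near the spatial origin (rather than away from it), the integration-by-parts step of Lemma \ref{lem:second lemma} breaks down --- the resulting weight fails to be integrable near $\rho=0$. Instead I will exploit the positive time separation $s-r\geq 3^{2/\alpha}-2^{2/\alpha}$ between $(s,y)\in Q_0$ and the support of $g$ via a Cauchy--Schwarz step in the spatial variable, together with the scaling identity (\ref{eq:q gamma scaling prop}).

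Writing $K(t,x):=|(-\Delta)^{c_1/2}q_{\alpha,\beta}(t,x)|$ and $k_1(\cdot):=K(1,\cdot)$, the Cauchy--Schwarz inequality applied to the spatial integral gives
$$
\left|(-\Delta)^{c_1/2}T^{\alpha,\beta}_{s-r}g(r,\cdot)(y)\right|_H^2 \leq \|K(s-r,\cdot)\|_{L_{1}(\bR^d)} \int_{\bR^d} K(s-r,y-z)|g(r,z)|_H^2\,dz.
$$
By (\ref{eq:q gamma scaling prop}) and the integrability of $k_1$ (verified from Lemma \ref{prop:kernel esti. of q}(v) since $c_1\in(0,2)$), one has $\|K(t,\cdot)\|_{L_1}=Ct^{-1/2}$. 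Integrating over $(s,y)\in Q_0$, interchanging the orders of integration, and using $g(r,\cdot)\equiv 0$ for $r>-3^{2/\alpha}$ produces
$$
\int_{Q_0}|\mathcal{T}g(s,y)|^2\,ds\,dy \leq C\int_{-\infty}^{-3^{2/\alpha}} \int_{B_{3d}} |g(r,z)|_H^2\,J(r,z)\,dz\,dr,
$$
where $J(r,z):=\int_{Q_0}(s-r)^{-1/2}K(s-r,y-z)\,dy\,ds$.

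The crucial step is to bound $J(r,z)$ uniformly in $z\in B_{3d}$. When $s-r\geq(4d)^{2/\alpha}$, the substitution $u=(y-z)/(s-r)^{\alpha/2}$ confines $u$ to a subset of the closed unit ball, so the small-argument bound $k_1(u)\leq C|u|^{-d+2-c_1}$ from Lemma \ref{prop:kernel esti. of q}(v) together with polar integration (using $\alpha(2-c_1)/2=(2\beta-1)/2$) yields
$$
\int_{[-1,1]^d}K(s-r,y-z)\,dy\leq C(s-r)^{-\beta}.
$$
For the moderate range $3^{2/\alpha}-2^{2/\alpha}\leq s-r<(4d)^{2/\alpha}$, only the crude $L_1$-bound $\int_{[-1,1]^d}K\,dy\leq C(s-r)^{-1/2}$ is used. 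Integrating in $s$ then produces a uniform bound $J(r,z)\leq C\bar J(r)$, where $\bar J(r)=O(1)$ on a bounded range of $r$ near $-3^{2/\alpha}$ and $\bar J(r)=O(|r|^{-1/2-\beta})$ as $r\to-\infty$.

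To finish, the inclusion $B_{3d}\subset B_{3d+\sqrt d}(x)$ gives $\int_{B_{3d}}|g(r,z)|_H^2\,dz\leq C\bM_x|g|_H^2(r,x)$, reducing the problem to bounding $\int_{-\infty}^{-3^{2/\alpha}}\bar J(r)\bM_x|g|_H^2(r,x)\,dr$ by $C\bM_t\bM_x|g|_H^2(t,x)$. Decomposing $(-\infty,-3^{2/\alpha}]$ dyadically into $I_k=[-2^{k+1}\cdot 3^{2/\alpha},-2^k\cdot 3^{2/\alpha}]$, the inclusion $I_k\subset[t-2^{k+1}\cdot 3^{2/\alpha},t+2^{k+1}\cdot 3^{2/\alpha}]$ for $t\in Q_0$ gives $\int_{I_k}\bM_x|g|_H^2(r,x)\,dr\leq C\cdot 2^k\bM_t\bM_x|g|_H^2(t,x)$, while on $I_k$ one has $\bar J\leq C\cdot 2^{-k(1/2+\beta)}$; the $k$-th contribution is therefore bounded by $C\cdot 2^{-k(\beta-1/2)}\bM_t\bM_x|g|_H^2(t,x)$, a geometric series that converges precisely because $\beta>1/2$. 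The main obstacle is pinning down this sharp $|r|^{-1/2-\beta}$ decay for $\bar J$: the naive $L_1$ estimate alone yields only $|r|^{-1}$, which is non-summable under the dyadic expansion, and the required improvement stems from the small-argument asymptotics of $k_1$ near the origin --- a manifestation of the subdiffusive scaling that combines exactly with the Cauchy--Schwarz loss of $(s-r)^{-1/2}$ so that $\beta>1/2$ is the sharp threshold making the final series convergent.
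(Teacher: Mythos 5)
Your proof is correct and follows the same overall strategy as the paper's---exploit the time separation $s-r\geq 3^{2/\alpha}-2^{2/\alpha}$ together with the compact spatial support of $g$ to produce integrable decay in $r$, then convert a bound of the form $\int_{-\infty}^{-3^{2/\alpha}} w(r)\,\bM_x|g|_H^2(r,x)\,dr$ into $\bM_t\bM_x|g|_H^2(t,x)$---but both technical steps are implemented differently. Where you apply Cauchy--Schwarz with respect to the measure $K(s-r,y-z)\,dz$, splitting off $\|K(s-r,\cdot)\|_{L_1}=C(s-r)^{-1/2}$ and leaving the convolution of $K$ with $|g|_H^2$, the paper applies Minkowski's integral inequality in $y$ over $[-1,1]^d$, pulling out $\left(\int_{B_{5d}}|g(r,\cdot)|_H^2\,dy\right)^{1/2}\leq N\left(\bM_x|g|_H^2(r,x)\right)^{1/2}$ directly and then bounding $\int_{|z|\leq 4d}K(s-r,z)\,dz$ by scaling; this yields the slightly stronger decay $|r|^{-2\beta}$ for the integrand in $r$, versus your $|r|^{-1/2-\beta}$, though both are summable under exactly the same threshold $\beta>1/2$, so nothing is lost. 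For the final conversion the paper uses a Fubini-type averaging identity, replacing $h(r)|r|^{-a-1}$ by $\left(\int_r^0 h\,ds\right)|r|^{-a-2}$, where your dyadic decomposition into the intervals $I_k$ accomplishes the same thing. One small point to patch: when $d=1$ and $c_1=1$, Lemma \ref{prop:kernel esti. of q}(v) gives $k_1(u)\lesssim 1+|\ln|u||$ near the origin rather than the pure power $|u|^{-d+2-c_1}$, which introduces a harmless logarithmic factor in your far-range estimate of $\int_{[-1,1]^d}K(s-r,y-z)\,dy$; the paper absorbs this by replacing $c_1$ with $\hat c\in(1,2)$ in that case, and your dyadic series still converges after the same adjustment.
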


\begin{proof}
Note that $g(s,\cdot)=0$ for $s\geq-3^{\frac{2}{\alpha}}$. Recalling
\eqref{eq:q gamma scaling prop}, we have
\begin{align*}
 & \left|\mathcal{T}g(s,y)\right|^{2} \leq\int_{-\infty}^{s}\left| (-\Delta)^{{c_1}/2}T_{s-r}^{\alpha,\beta}g(r,\cdot)(y)\right| _{H}^{2}dr\\
 & =\int_{-\infty}^{-3^{\frac{2}{\alpha}}}\left| (s-r)^{-\frac{\alpha d}{2}-\frac{1}{2}}\int_{\mathbb{R}^{d}}(-\Delta)^{{c_1}/2}q_{\alpha,\beta}\left(1,(s-r)^{-\frac{\alpha}{2}}z\right)g(r,y-z)dz\right| _{H}^{2}dr\\
 & \leq\int_{-\infty}^{-3^{\frac{2}{\alpha}}}(s-r)^{-\alpha d-1}\left[\int_{\mathbb{R}^{d}}\left|(-\Delta)^{{c_1}/2}q_{\alpha,\beta}\left(1,(s-r)^{-\frac{\alpha}{2}}z\right)\right|\left| g(r,y-z)\right| _{H}dz\right]^{2}dr.
\end{align*}
If $|z|\geq 4d$, then $g(r,y-z)=0$ since $y\in Q_{0}$ and $|y-z|\geq|z|-|y|\geq3d$.
Therefore, by Minkowski's inequality and Lemma \ref{prop:kernel esti. of q},
\begin{align*}
 &\int_{[-1,1]^{d}} \left|\int_{\mathbb{R}^{d}}\left|(-\Delta)^{{c_1}/2}q_{\alpha,\beta}\left(1,(s-r)^{-\frac{\alpha}{2}}z\right)\right|\left| g(r,y-z)\right| _{H}dz\right|^{2}dy\\
 & \leq\int_{[-1,1]^{d}}\left|\int_{|z|\leq4d}\left|(-\Delta)^{{c_1}/2}q_{\alpha,\beta}\left(1,(s-r)^{-\frac{\alpha}{2}}z\right)\right|\left| g(r,y-z)\right| _{H}dz\right|^{2}dy\\
 & \leq\left(\int_{|z|\leq4d}\left[\int_{[-1,1]^{d}}\left| g(r,y-z)\right| _{H}^{2}dy\right]^{1/2}\left|(-\Delta)^{{c_1}/2}q_{\alpha,\beta}\left(1,(s-r)^{-\frac{\alpha}{2}}z\right)\right|dz\right)^{2}\\
 & \leq\left(\int_{|z|\leq4d}\left[\int_{B_{5d}(0)}\left| g(r,y)\right| _{H}^{2}dy\right]^{1/2}\left|(-\Delta)^{{c_1}/2}q_{\alpha,\beta}\left(1,(s-r)^{-\frac{\alpha}{2}}z\right)\right|dz\right)^{2}\\
 & \leq N\bM_x\left| g\right| _{H}^{2}(r,x)\left(\int_{|z|\leq4d}\left|(-\Delta)^{{c_1}/2}q_{\alpha,\beta}\left(1,(s-r)^{-\frac{\alpha}{2}}z\right)\right|dz\right)^{2}\\
 & \leq N(s-r)^{\alpha(d+\hat{c}-2)}\bM_x\left| g\right| _{H}^{2}(r,x),
\end{align*}
where $\hat{c}\in(1,2)$ if ${c_1}=1$ and $d=1$, and  otherwise
$\hat{c}={c_1}$. Since $|s-r|\sim|r|$ for $r<-3^{\frac{2}{\alpha}}$
and $-2^{\frac{2}{\alpha}}<s<0$, we have
\begin{align*}
\int_{Q_{0}} & \left|\mathcal{T}g(s,y)\right|^{2}dsdy
=\int_{-2^{\frac{2}{\alpha}}}^{0}\int_{[-1,1]^{d}}\left|\mathcal{T}g(s,y)\right|^{2}dsdy\\
 & \leq N\int_{-2^{\frac{2}{\alpha}}}^{0}\int_{-\infty}^{-3^{\frac{2}{\alpha}}}(s-r)^{\alpha(\hat{c}-2)-1}\bM_x\left| g\right| _{H}^{2}(r,x)dr ds\\
 & \leq N\int_{-\infty}^{-3^{\frac{2}{\alpha}}}\bM_x\left| g\right| _{H}^{2}(r,x)\frac{dr}{|r|^{\alpha(2-\hat{c})+1}}\\
 & \leq N\int_{-\infty}^{-3^{\frac{2}{\alpha}}}\left(\int_{-r}^{0}\bM_x\left| g\right| _{H}^{2}(s,x)ds\right)\frac{dr}{|r|^{\alpha(2-\hat{c})+2}}\\
 & \leq N\bM_{t}\bM_x\left| g\right| _{H}^{2}(t,x)\int_{3^{\frac{2}{\alpha}}}^{\infty}\frac{dr}{r^{\alpha(2-\hat{c})+1}}\leq N\bM_{t}\bM_x\left| g\right| _{H}^{2}(t,x).
\end{align*}
The lemma is proved.
\end{proof}

\begin{lem}
                            \label{lem:forth lemma}

Let $g\in C_{c}^{\infty}(\mathbb{R}^{d+1};H)$
and assume that $g(t,x)=0$ outside of $(-\infty,-3^{\frac{2}{\alpha}})\times B_{2d}^{c}$.
Then for any $(t,x)\in Q_{0}$,
$$
\int_{Q_{0}}\int_{Q_{0}}\left|\mathcal{T}g(s,y)-\mathcal{T}g(r,z)\right|^{2}dsdydrdz\leq N\bM_{t}\bM_x\left| g\right| _{H}^{2}(t,x),
$$
where $N=N(d,\alpha,\beta)$.
\end{lem}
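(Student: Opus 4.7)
The strategy is to run the proof of Lemma \ref{lem:third lemma} one level deeper: instead of bounding $\cT g$ on $Q_0$ itself, we estimate the oscillation $\cT g(s,y)-\cT g(r,z)$ by differentiating the kernel $(-\Delta)^{c_1/2}q_{\alpha,\beta}$ in $(s,y)$. Each time-derivative supplies an extra factor $\sim|\tau|^{-1}$ on the support of $g$ and each spatial derivative an extra $\sim|\tau|^{-\alpha/2}$; this additional decay is precisely what absorbs the new $Q_0\times Q_0$-integration.

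The first step is the reverse triangle inequality in the $L^2(d\tau;H)$-norm defining $\cT$:
\[
|\cT g(s,y)-\cT g(r,z)|^2 \;\leq\; \int_{-\infty}^{-3^{2/\alpha}} |F(s,y;\tau)-F(r,z;\tau)|_H^2\,d\tau,
\]
where $F(t,x;\tau):=(-\Delta)^{c_1/2}T^{\alpha,\beta}_{t-\tau}g(\tau,\cdot)(x)$. The upper cutoff uses $g(\tau,\cdot)\equiv 0$ for $\tau\geq -3^{2/\alpha}$, while $(s,y),(r,z)\in Q_0$ gives $s-\tau\sim r-\tau\sim|\tau|$ on the remaining range. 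The fundamental theorem of calculus along the segment $(t_\theta,y_\theta):=(r+\theta(s-r),z+\theta(y-z))$, $\theta\in[0,1]$, combined with Minkowski's inequality, bounds $|F(s,y;\tau)-F(r,z;\tau)|_H$ by $\int_0^1[|s-r|A_\theta(\tau)+|y-z|B_\theta(\tau)]\,d\theta$, where $A_\theta,B_\theta$ denote the convolutions of $|g(\tau,\cdot)|_H$ against $|\partial_t(-\Delta)^{c_1/2}q_{\alpha,\beta}(t_\theta-\tau,\cdot)|$ and $|\nabla_y(-\Delta)^{c_1/2}q_{\alpha,\beta}(t_\theta-\tau,\cdot)|$, respectively.

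Next, the scaling identity (\ref{eq:q gamma scaling prop}) with $\sigma=1$ and the pointwise bound (\ref{de ker est}) with $n=1$ show that these derivative kernels satisfy the same type of pointwise estimate as the bare kernel used in Lemma \ref{lem:third lemma}, but with an extra prefactor $(t_\theta-\tau)^{-1}$ (resp.\ $(t_\theta-\tau)^{-\alpha/2}$). Performing the spatial integration-by-parts (\ref{eq:integration by parts}) exactly as in the proof of Lemma \ref{lem:third lemma}---using that $g(\tau,\cdot)=0$ on $B_{2d}$ and $|y_\theta|\leq\sqrt d$---we obtain
\[
\int_{Q_0\times Q_0}A_\theta(\tau)^2\,ds\,dy\,dr\,dz \;\leq\; N|\tau|^{\alpha(\hat c-2)-3}\bM_x|g|_H^2(\tau,x),
\]
and an analogous bound for $B_\theta^2$ with exponent $\alpha(\hat c-2)-\alpha-1$; here $\hat c=c_1$ except in the borderline case $d=1$, $c_1=1$ of Lemma \ref{prop:kernel esti. of q}(v), where $\hat c\in(1,2)$ absorbs the logarithm. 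Since $|s-r|^2,|y-z|^2$ are bounded on $Q_0$, integrating the above in $\tau$ via the same Hardy-type IBP as in Lemma \ref{lem:third lemma} produces the desired majorant $N\bM_t\bM_x|g|_H^2(t,x)$.

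The main obstacle is the exponent bookkeeping in the derivative kernel estimates: after combining the extra decay $|\tau|^{-1}$ or $|\tau|^{-\alpha/2}$ from the derivatives with the bounded $|s-r|^2,|y-z|^2$ factors and the new $Q_0\times Q_0$-integration, the exponents $\alpha(\hat c-2)-3$ and $\alpha(\hat c-2)-\alpha-1$ must remain strictly below $-1$ at $-\infty$. This is comfortably satisfied since $\alpha\in(0,2)$ and $\hat c<2$, and the borderline $d=1$, $c_1=1$ case is handled as in Lemma \ref{prop:kernel esti. of q}(v) and Lemma \ref{lem:third lemma} by the $\hat c$-adjustment.
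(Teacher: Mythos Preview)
Your approach is essentially the paper's own, with one cosmetic difference: the paper invokes Poincar\'e's inequality on $Q_0$ to reduce the double integral to $\int_{Q_0}(|\partial_s\cT g|^2+|D_y\cT g|^2)\,ds\,dy$, and then pushes the derivatives inside the $L^2(d\tau;H)$-norm via Minkowski; you instead use the reverse triangle inequality in $L^2(d\tau;H)$ followed by the fundamental theorem of calculus along the segment. These two routes are equivalent and lead to the same kernel-derivative estimates.

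One inaccuracy worth flagging: you write ``exactly as in the proof of Lemma \ref{lem:third lemma}'' for the spatial step, but Lemma \ref{lem:third lemma} treats $g$ supported \emph{inside} $B_{3d}$ and does not use the integration-by-parts formula (\ref{eq:integration by parts}) at all---it simply integrates the kernel over a bounded ball. The spatial configuration here ($g=0$ on $B_{2d}$, hence the convolution restricted to $|z|\geq d$) matches Lemma \ref{lem:second lemma} and the paper's own proof of the present lemma, where (\ref{eq:integration by parts}) is genuinely needed to produce the maximal function. Correspondingly, the exponent you should obtain for the spatial derivative is $\alpha(c_1+\varepsilon-2)-1$ with $c_1+\varepsilon\in(1,2)$ (the paper's parametrization), rather than $\alpha(\hat c-2)-\alpha-1$ with $\hat c$ inherited from Lemma \ref{lem:third lemma}; the latter comes from the wrong spatial setup. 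This does not break your argument---both exponents are comfortably below $-1$---but the reference and the exponent bookkeeping should be adjusted to match the $B_{2d}^c$-support case.
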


\begin{proof}
Due to  Poincar\'e's inequality, it is enough to show
\begin{equation}
\int_{Q_{0}}\left(\left|\frac{\partial}{\partial s}\mathcal{T}g\right|^{2}+\left|D_{y}\mathcal{T}g\right|^{2}\right)dsdy\leq N\bM_{t}\bM_x\left| g\right| _{H}^{2}(t,x).
                  \label{eq:3-1}
\end{equation}
Because of the similarity, we only prove
\begin{equation}
                    \label{eq:3-1}
\int_{Q_{0}} \left|D_{y}\mathcal{T}g\right|^{2} dsdy\leq N\bM_{t}\bM_x\left| g\right| _{H}^{2}(t,x).
\end{equation}
Note that since $g(s,\cdot)=0$ for $s\geq-3^{\frac{2}{\alpha}}$,
\begin{align*}
D_{x}\mathcal{T}g(t,x) & =D_{x}\left[\int_{-\infty}^{-3^{\frac{2}{\alpha}}}\left| (-\Delta)^{{c_1}/2}T_{t-s}^{\alpha,\beta}g(s,\cdot)(x)\right| _{H}^{2}ds\right]^{1/2}\\
 & \leq\left[\int_{-\infty}^{-3^{\frac{2}{\alpha}}}\left| D_{x}(-\Delta)^{{c_1}/2}T_{t-s}^{\alpha,\beta}g(s,\cdot)(x)\right| _{H}^{2}ds\right]^{1/2},
\end{align*}
where the above inequality is from Minkowski's inequality.
Recall \eqref{eq:q gamma scaling prop}. Thus for any $(s,y) \in Q_0$,
\begin{align*}
 & \left|D_{y}\mathcal{T}g(s,y)\right|^{2}\\
 & \leq\int_{-\infty}^{-3^{\frac{2}{\alpha}}}\left| D_{y}(-\Delta)^{{c_1}/2}T_{s-r}^{\alpha,\beta}g(r,\cdot)(y)\right| _{H}^{2}dr\\
 & =\int_{-\infty}^{-3^{\frac{2}{\alpha}}}\left| (s-r)^{-\frac{\alpha d}{2}-\frac{1}{2}-\frac{\alpha}{2}}\int_{\mathbb{R}^{d}}D_{x}(-\Delta)^{{c_1}/2}q_{\alpha,\beta}\left(1,(s-r)^{-\frac{\alpha}{2}}z\right)g(r,y-z)dz\right| _{H}^{2}dr\\
 & \leq\int_{-\infty}^{-3^{\frac{2}{\alpha}}}(s-r)^{-\alpha d-1-\alpha} \\
&\qquad \qquad \qquad \qquad  \times \left[\int_{\mathbb{R}^{d}}\left|D_{x}(-\Delta)^{{c_1}/2}q_{\alpha,\beta}\left(1,(s-r)^{-\frac{\alpha}{2}}z\right)\right|\left| g(r,y-z)\right| _{H}dz\right]^{2}dr.
\end{align*}
Since $g(r,y-z)=0$ if $|z|\leq d$ and $y\in[-1,1]^{d}$,
\begin{align*}
 & \int_{Q_{0}}|D_{y}\mathcal{T}g(s,y)|^{2}dsdy\\
 & \leq\int_{Q_{0}}\int_{-\infty}^{-4^{\frac{2}{\alpha}}}(s-r)^{-\alpha(d+1)-1}\\
 &\qquad \times\left[\int_{|z|\geq d}\left|D_{x}(-\Delta)^{{c_1}/2}q_{\alpha,\beta}\left(1,(s-r)^{-\frac{\alpha}{2}}z\right)\right|\left| g(r,y-z)\right| _{H}dz\right]^{2}drdsdy.
\end{align*}
Let $(t,x) \in Q_0$.
By using (\ref{eq:integration by parts}) and Lemma \ref{prop:kernel esti. of q}(v),
\begin{align*}
 & \int_{|z|\geq d}\left|D_{x}(-\Delta)^{{c_1}/2}q_{\alpha,\beta}\left(1,(s-r)^{-\frac{\alpha}{2}}z\right)\right|\left| g(r,y-z)\right| _{H}dz\\
 & \leq N(s-r)^{-\frac{\alpha}{2}}\int_{d}^{\infty}\left((s-r)^{-\frac{\alpha}{2}}\rho\right)^{-d-{c_1} - \varepsilon} \left(\int_{B_{\rho}(y)}\left| g(r,z)\right| _{H}dz\right)d\rho\\
 & \leq N(s-r)^{\frac{\alpha }{2}(d+{c_1} + \varepsilon -1)}\bM_x\left| g(r,x)\right| _{H},
\end{align*}
where $\varepsilon \in [0,2]$ is taken so that ${c_1} + \varepsilon  \in (1,2)$.
Therefore,
\begin{align*}
\int_{Q_{0}}\left|D_{y}\mathcal{T}g(s,y)\right|^{2}dsdy & \leq N\int_{-2^{\frac{2}{\alpha}}}^{0}\left[\int_{-\infty}^{-3^{\frac{2}{\alpha}}}(s-r)^{\alpha(c_1+\varepsilon -2)-1}\bM_x\left| g(r,x)\right| _{H}^{2}dr\right]ds\\
 & \leq N\int_{-\infty}^{-3^{\frac{2}{\alpha}}}\left(\int_{-r}^{0}\bM_x\left| g(r,x)\right| _{H}^{2}ds\right)\frac{dr}{|r|^{\alpha(c_1+\varepsilon -2)-2}}\\
 & \leq N\bM_{t}\bM_x\left| g(t,x)\right| _{H}^{2}\int_{3^{\frac{2}{\alpha}}}^{\infty}\frac{dr}{r{}^{\alpha(c_1+\varepsilon -2)-1}}\\
 & \leq\bM_{t}\bM_x\left| g(t,x)\right| _{H}^{2}.
\end{align*}
Thus (\ref{eq:3-1}) and  the lemma are proved.
\end{proof}

For a measurable function $h(t,x)$ on $\mathbb{R}^{d+1},$ we define
the sharp function
$$
h^{\#}(t,x)=\sup_{Q}\aint_{Q}\left|h(r,z)-h_{Q}\right|drdz,
$$
where
$$
h_{Q}=\aint_{Q}h(s,y)dyds
$$
and the supremum is taken over all $Q\subset\mathbb{R}^{d+1}$ containing
$(t,x)$ of the form
\begin{align*}
Q
&=Q_{R}(s,y), \quad R>0 \\
&=(s-R^{\frac{2}{\alpha}}/2,s+R^{\frac{2}{\alpha}}/2)\times(y^{1}-R/2,y^{1}+R/2)\times\cdots\times(y^{d}-R/2,y^{d}+R/2).
\end{align*}
By Fefferman-Stein theorem,
\begin{equation}
                  \label{fs}
          \|h^{\#}\|_{L_p(\bR^{d+1})}\leq N \|h\|_{L_p(\bR^{d+1})},
         \quad  \quad p>1.
          \end{equation}
Also note that for any $c\in \bR$,
\begin{eqnarray}
                \nonumber
\aint_{Q}\left|h(r,z)-h_{Q}\right|^2drdz&=&\aint_{Q}\left|\aint_Q(h(r,z)-h(s,y))dsdy\right|^2drdz\\
&\leq&4 \aint_Q |h(r,z)-c|^2 drdz.      \label{4times}
\end{eqnarray}

\begin{proof}[{\bf{Proof of Theorem \ref{thm:L-P}}}]
If $p=2$, (\ref{eq:L-P ineq}) follows from Lemma \ref{lem:L2 result}.
Hence we  assume  $p>2$.

First we prove for each $Q=Q_{R}(s,y)$ and $(t,x)\in Q$,
\begin{equation}
\aint_{Q}|\mathcal{T}g-\left(\mathcal{T}g\right)_{Q}|^{2}dsdy\leq N\bM_{t}\bM_x\left| g\right| _{H}^{2}(t,x).\label{eq:target in KLP}
\end{equation}
Note that for any $h_{0}\in\mathbb{R}$ and $h\in\mathbb{R}^{d}$,
\begin{align*}
&\mathcal{T}g(t-h_{0},x-h) \\
& =\left[\int_{-\infty}^{t-h_{0}}\left| (-\Delta)^{{c_1}/2}T_{t-h_{0}-s}^{\alpha,\beta}g(s,\cdot)(x-h)\right| _{H}^{2}ds\right]^{1/2}\\
& =\left[\int_{-\infty}^{t-h_{0}}\left| (-\Delta)^{{c_1}/2}\int_{\mathbb{R}^{d}}q_{\alpha,\beta}(t-h_{0}-s,x-h-y)g(s,y)dy\right| _{H}^{2}ds\right]^{1/2}\\
& =\left[\int_{-\infty}^{t}\left| (-\Delta)^{{c_1}/2}\int_{\mathbb{R}^{d}}q_{\alpha,\beta}(t-s,x-y)\bar{g}(s,y)dy\right| _{H}^{2}ds\right]^{1/2}\\
& =\mathcal{T}\bar{g}(t,x)
\end{align*}
where $\bar{g}(s,y):=g(s-h_{0},y-h)$. This shows that  to prove (\ref{eq:target in KLP}) we may assume $(s+R^{\frac{2}{\alpha}},y)=(0,0)$.

Also, due to (\ref{8.20.3}) (or (\ref{8.20.4})),
$$
\mathcal{T}g(c^{\frac{2}{\alpha}}\cdot,c\cdot)(t,x)=\mathcal{T}g(c^{\frac{2}{\alpha}}t,cx).
$$
Since dilations do not affect averages, it suffices to prove \eqref{eq:target in KLP} with $R=2$, i.e.
$$Q=Q_{0}=[-2^{\frac{2}{\alpha}},0]\times[-1,1]^{d}.
$$
Now we take a function $\zeta\in C_{c}^{\infty}$
such that $\zeta=1$ on $[-3^{\frac{2}{\alpha}},3^{\frac{2}{\alpha}}]$,
$\zeta=0$ outside of $[-4^{\frac{2}{\alpha}},4^{\frac{2}{\alpha}}]$,
and $0\leq\zeta\leq1$. We also choose a function $\eta\in C_{c}^{\infty}(\mathbb{R}^{d})$
such that $\eta=1$ on $B_{2d}$, $\eta=0$ outside of $B_{3d}$,
and $0\leq\eta\leq1$. Set
$$
g_{1}(t,x)=g\zeta,\quad g_{2}=g(1-\zeta)\eta,\quad g_{3}=g(1-\zeta)(1-\eta).
$$
Observe that $g=g_{1}+g_{2}+g_{3}$ and
$$
(-\Delta)^{{c_1}/2}T_{t-s}^{\alpha,\beta}g_{1}(s,y)=\zeta(s)(-\Delta)^{{c_1}/2}T_{t-s}^{\alpha,\beta}g(s,y),
$$
\begin{equation}
              \label{eqn 8.20.5}
\mathcal{T}g\leq\mathcal{T}g_{1}+\mathcal{T}(g_{2}+g_{3}),
         \end{equation}
         \begin{equation}
              \label{eqn 8.20.7}
              \cT g_3\leq \mathcal{T}(g_{2}+g_{3})\leq\mathcal{T}g.
\end{equation}
 (\ref{eqn 8.20.5}) is because $\cT$ is sublinear (see (\ref{sublinear})), and  (\ref{eqn 8.20.7})  comes
from the facts $g_3=(1-\eta)(g_2+g_3)$, $g_2+g_3=(1-\zeta)g$, $|1-\eta(s)|\leq 1$, and $|1-\zeta(s)|\leq 1$.
Hence for any constant
$c$,
\begin{equation}
          \label{eqn indeed1}
|\mathcal{T}g-c|\leq|\mathcal{T}g_{1}|+|\mathcal{T}(g_{2}+g_{3})-c|
\end{equation}
and
\begin{equation}
          \label{eqn indeed2}
\left|\mathcal{T}(g_{2}+g_{3})-c\right|\leq\left|\mathcal{T}g_{2}\right|+\left|\mathcal{T}g_{3}-c\right|.
\end{equation}
Indeed, (\ref{eqn indeed1}) is from (\ref{eqn 8.20.5}) if $c\leq \cT g$, and if $c>\cT g$ then it follows from $\cT(g_2+g_3)\leq \cT g$. Similarly, (\ref{eqn indeed2}) is obvious if $c\leq \cT(g_2+g_3)$, and $c>\cT(g_2+g_3)$ we use $\cT g_3 \leq \cT(g_2+g_3)$.

Therefore, for any $c\in \bR$,
\begin{align*}
\left|\mathcal{T}g(s,y)-c\right| & \leq\left|\mathcal{T}g_{1}(s,y)\right|+\left|\mathcal{T}(g_{2}+g_{3})(s,y)-c\right|\\
 & \leq\left|\mathcal{T}g_{1}(s,y)\right|+\left|\mathcal{T}g_{2}(s,y)\right|+\left|\mathcal{T}g_{3}(s,y)-c\right|,
\end{align*}
and  by (\ref{4times})
\begin{align*}
\aint_{Q_0} & \left|\mathcal{T}g(s,y)-\left(\mathcal{T}g\right)_{Q_0}\right|^{2}dsdy\leq4\aint_{Q_0}\left|\mathcal{T}g(s,y)-c\right|^{2}dsdy\\
\leq & 16\aint_{Q_0}\left|\mathcal{T}g_{1}(s,y)\right|^{2}dsdy+16\aint_{Q_0}\left|\mathcal{T}g_{2}(s,y)\right|^{2}dsdy+16\aint_{Q_0}\left|\mathcal{T}g_{3}(s,y)-c\right|^{2}dyds.
\end{align*}
Note $g_1$ and $g_2$ satisfy the conditions of Lemma \ref{lem:second lemma} and \ref{lem:third lemma}, respectively, and thus
\begin{eqnarray*}
&&\aint_{Q_0}\left|\mathcal{T}g_{1}(s,y)\right|^{2}dsdy+\aint_{Q_0}\left|\mathcal{T}g_{2}(s,y)\right|^{2}dsdy\\
&&\leq N \left( \bM_{t}\bM_x\left| g_1\right| _{H}^{2}(t,x)+ \bM_{t}\bM_x\left| g_2\right| _{H}^{2}(t,x)\right)\leq  N \bM_{t}\bM_x\left| g\right| _{H}^{2}(t,x).
\end{eqnarray*}
The second inequality above is due to $|g_i|\leq |g|$ ($i=1,2,3$).   Take $c=\left(\mathcal{T}g_{3}\right)_{Q_0}$ and note that
\begin{equation}
       \label{eqn 8.20.8}
\aint_{Q_0}\left|\mathcal{T}g_{3}(s,y)-\left(\mathcal{\mathcal{T}}g_{3}\right)_{Q_0}\right|^{2}dsdy\leq \aint_{Q_0}\aint_{Q_0}\left|\mathcal{T}g_{3}(s,y)-\mathcal{T}g_{3}(r,z)\right|^{2}drdzdsdy.
\end{equation}
Note also, on $Q_0$,  $\cT g_3$  does not depend on the values of
$g_3(t,x)$ for $t >0$. Hence the above two integrals do not change
if we replace $g_3$ by $g_3 \xi$, where $\xi\in C^{\infty}(\bR)$ so
that $0\leq \xi \leq 1$, $\xi=1$ for $t\leq 1$, and $\xi=0$ for
$t\geq 2^{2/{\alpha}}$.   Now it is easy to check that $g_3\xi$
satisfies the assumptions of Lemma  \ref{lem:forth lemma}, and
therefore the right hand side of (\ref{eqn 8.20.8}) is controlled by
$$
\bM_{t}\bM_x\left| g_3\xi\right| _{H}^{2}(t,x)\leq \bM_{t}\bM_x\left| g\right| _{H}^{2}(t,x).
$$
Hence  (\ref{eq:target in KLP}) is finally proved.

We continue the proof of the theorem.
By (\ref{eq:target in KLP}) and Jensen's inequality
$$
(\cT g)^{\#}(t,x)\leq N \left(\bM_{t}\bM_x\left| g\right| _{H}^{2}(t,x) \right)^{1/2}.
$$
Therefore by Fefferman-Stein theorem (\cite[Theorem 4.2.2]{Stein1993}) and
Hardy-Littlewood maximal theorem (\cite[Theorem 1.3.1]{Stein1993}),
\begin{eqnarray*}
\|\cT g\|_{L_p(\bR^{d+1})}&\leq& N \|(\cT g)^{\#}\|_{L_p(\bR^{d+1})}\\
&\leq& N \|\bM_{t}\bM_x\left| g\right| _{H}^{2}\|^{1/2}_{L_{p/2}(\bR^{d+1})}\\
&\leq& N \|\bM_x|g|^2_H\|^{1/2}_{L_{p/2}(\bR^{d+1})}\leq N \||g|_H\|_{L_p(\bR^{d+1})}.
\end{eqnarray*}
This proves the theorem if $T=\infty$.  Note that if $T<\infty$ the left hand side of (\ref{thm:L-P}) does not depend on the value of $g$ for $t\geq T$.  Take $\tilde \xi \in C^{\infty}(\bR)$ such that $0\leq \tilde \xi \leq 1$, $\tilde \xi=1$ for $t\leq T$ and $\tilde \xi=0$ for $t\geq T+\varepsilon$, $\varepsilon>0$. Then it is enough to apply the result for $T=\infty$ with $g\tilde \xi$.   Since $\varepsilon>0$ is arbitrary the theorem is proved.
\end{proof}

\mysection{Model Equation}
         \label{section 4}

Let $\alpha\in (0,2)$ and $\beta \in \left(-\infty, \alpha+\frac{1}{2}  \right)$.  In this section we obtain the uniqueness, existence, and sharp estimate of strong solutions to  the model equation
\begin{equation}
                    \label{eq:model equation}
\partial_{t}^{\alpha}u(t,x)=\Delta u (t,x)+\partial_{t}^{\beta}\int_{0}^{t}g^{k}(s,x)dw_{s}^{k}, \quad t>0
\end{equation}
with the zero initial condition $u(0,x)=0$ (additionally $\partial_{t}u(0,x)=0$ if $\alpha>1$).

The following lemma  is used to  estimate solutions to  the equation  when $\beta <1/2$.
\begin{lem}
                                \label{lem:pde approach}
Let $\gamma\in\mathbb{R}$, $p>2$, $\beta<\frac{1}{2}$, and $g\in\mathbb{H}_{p}^{\gamma}(T,l_{2})$. Then
for any  $t \in [0,T]$,
$$
\mathbb{E}\int_{0}^{t}\left\Vert\sum_{k=1}^{\infty} \partial_{t}^{\beta}\int_{0}^{r}g^{k}(s,\cdot)dw_{s}^{k}\right\Vert _{H_{p}^{\gamma}}^{p}dr
\leq N(d,p,\beta,T) I_{t}^{1-2\beta}\|g\|_{\mathbb{H}_{p}^{\gamma}(\cdot,l_{2})}^{p}(t).
$$
In particular,
$$
\mathbb{E} \int_{0}^{t}\left\Vert\sum_{k=1}^{\infty} \partial_{t}^{\beta}\int_{0}^{r}g^{k}(s,\cdot)dw_{s}^{k}\right\Vert _{H_{p}^{\gamma}}^{p}dr
\leq N \|g\|_{\mathbb{H}_{p}^{\gamma}(t,l_{2})}^{p}.
$$
\end{lem}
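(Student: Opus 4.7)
The plan is to first rewrite $\partial^\beta_t$ applied to the stochastic integral as an explicit Liouville-type stochastic convolution, then apply the Burkholder-Davis-Gundy inequality pointwise in $x$, and finally convert the resulting $L^2$-in-time bound into the claimed fractional-integral form by Jensen's inequality and Fubini.

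Since $(1-\Delta)^{\gamma/2}$ is an isometry of $H^\gamma_p$ onto $L_p$ (and of $H^\gamma_p(l_2)$ onto $L_p(l_2)$) that commutes with $\partial_t^\beta$ and the stochastic integral, I would reduce to $\gamma=0$. The key identity to establish next is
\begin{equation*}
\partial_t^\beta \sum_{k=1}^\infty \int_0^r g^k(s,\cdot)\,dw_s^k \;=\; \frac{1}{\Gamma(1-\beta)}\sum_{k=1}^\infty \int_0^r (r-s)^{-\beta}g^k(s,\cdot)\,dw_s^k,\qquad (a.e.)\ r\le T.
\end{equation*}
For $\beta\in(0,1/2)$ this follows from Lemma \ref{lem:s int}(iii) with parameter $1-\beta>1/2$, upon using that Caputo equals Riemann-Liouville equals $\tfrac{d}{dr}I_r^{1-\beta}$ on functions with zero initial value (cf.~\eqref{eqn 8.30}); for $\beta\le 0$ we have $\partial_t^\beta=I_t^{-\beta}$ and the identity is direct stochastic Fubini combined with Lemma \ref{lem:s int}(i).

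Fixing $x$, the Burkholder-Davis-Gundy inequality then yields
\begin{equation*}
\mathbb{E}\left|\sum_{k=1}^\infty\int_0^r (r-s)^{-\beta}g^k(s,x)\,dw_s^k\right|^p \le N(p)\,\mathbb{E}\left(\int_0^r (r-s)^{-2\beta}|g(s,x)|_{l_2}^2\,ds\right)^{p/2},
\end{equation*}
and the hypothesis $\beta<1/2$ is exactly what makes $(r-s)^{-2\beta}$ locally integrable. To trade the inner $L^2$-in-time for $L^p$-in-time, I would invoke Jensen's inequality with the convex function $z\mapsto z^{p/2}$ against the finite measure $d\mu(s)=(r-s)^{-2\beta}ds$ on $[0,r]$, whose total mass $\mu([0,r])=r^{1-2\beta}/(1-2\beta)$ is bounded by $T^{1-2\beta}/(1-2\beta)$; this yields
\begin{equation*}
\left(\int_0^r (r-s)^{-2\beta}|g(s,x)|_{l_2}^2\,ds\right)^{p/2} \le N(p,\beta,T)\int_0^r (r-s)^{-2\beta}|g(s,x)|_{l_2}^p\,ds.
\end{equation*}

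Integrating in $x\in\mathbb{R}^d$ and then in $r\in[0,t]$ and exchanging the order of $r$ and $s$ via Fubini produces the factor $\int_s^t(r-s)^{-2\beta}dr = (t-s)^{1-2\beta}/(1-2\beta)$; a further Fubini, using $\|g\|_{\bL_p(s,l_2)}^p=\mathbb{E}\int_0^s\|g(r',\cdot)\|_{L_p(l_2)}^p dr'$, identifies the outcome (up to explicit constants involving $\Gamma(2-2\beta)$) with $I_t^{1-2\beta}\|g\|_{\bL_p(\cdot,l_2)}^p(t)$. Undoing the reduction to $\gamma=0$ delivers the main inequality, and the ``in particular'' statement follows because $s\mapsto\|g\|_{\bH^\gamma_p(s,l_2)}^p$ is non-decreasing, so $I_t^{1-2\beta}$ of this quantity at $t$ is bounded by $\Gamma(2-2\beta)^{-1}T^{1-2\beta}\|g\|_{\bH^\gamma_p(t,l_2)}^p$. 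The main technical nuisance is the Jensen step: since $p/2>1$ one cannot simply pull $L_p(dx)$ inside the $ds$-integral, and it is Jensen against the renormalised probability measure $\mu/\mu([0,r])$ that produces the correct $L^p$-in-time estimate, at the (harmless) price of a $T$-dependent constant.
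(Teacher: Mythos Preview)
Your proof is correct and follows essentially the same route as the paper: reduce to $\gamma=0$ via the isometry $(1-\Delta)^{\gamma/2}$, invoke Lemma~\ref{lem:s int}(iii) to rewrite $\partial_t^\beta$ of the stochastic integral as the convolution $\frac{1}{\Gamma(1-\beta)}\sum_k\int_0^r(r-s)^{-\beta}g^k\,dw_s^k$, apply Burkholder--Davis--Gundy, then use Jensen/H\"older against the measure $(r-s)^{-2\beta}ds$ to pass from $p/2$ to $p$, and finish with Fubini. The only cosmetic difference is that the paper phrases your Jensen step as a H\"older inequality with the splitting $(r-s)^{-2\beta}=(r-s)^{-2\beta(\frac{2}{p}+\frac{p-2}{p})}$, and it does not separate the cases $\beta\in(0,1/2)$ and $\beta\le 0$ since Lemma~\ref{lem:s int}(iii) with parameter $1-\beta>1/2$ covers both at once.
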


\begin{proof}
Due to the isometry $(I-\Delta)^{\gamma/2}:H^{\gamma}_p \to L_p$, we only need to prove the case $\gamma=0$.
By Lemma \ref{lem:s int} (iii),
\begin{align*}
\partial_{t}^{\beta}\left(\sum_{k=1}^{\infty}\int_{0}^{r}g^{k}(s,x)dw_{s}^{k}\right) =\frac{1}{\Gamma(\alpha)}\sum_{k=1}^{\infty}\int_{0}^{t}(t-s)^{-\beta}g^{k}(s,x)dw_{s}^{k},
\end{align*}
for almost all $t\leq T$ (a.s.). By the Burkholder-Davis-Gundy
inequality and the H\"{o}lder inequality, for all $t\leq T$,
\begin{align*}
\mathbb{E}\int_{0}^{t}&\left\Vert \frac{1}{\Gamma(\alpha)}\sum_{k=1}^{\infty}\int_{0}^{r}(r-s)^{-\beta}g^{k}(s,\cdot)dw_{s}^{k}\right\Vert _{L_{p}}^{p}dr\\
 & \leq N\mathbb{E}\int_{\mathbb{R}^{d}}\int_{0}^{t}\left(\int_{0}^{r}(r-s)^{-2\beta}\left|g(s,x)\right|_{l_{2}}^{2}ds\right)^{p/2}drdx\\
 & \leq N\mathbb{E}\int_{\mathbb{R}^{d}}\int_{0}^{t}\left(\int_{0}^{r}(r-s)^{-2\beta(\frac{2}{p} + \frac{p-2}{p})}\left|g(s,x)\right|_{l_{2}}^{2}ds\right)^{p/2}drdx\\
 & \leq N\mathbb{E}\int_{\mathbb{R}^{d}}\int_{0}^{t}\int_{0}^{r}(r-s)^{-2\beta}\left|g(s,x)\right|_{l_{2}}^{p}dsdrdx \\
 & = N \int_{0}^{t}(t-s)^{-2\beta}\|g\|_{\mathbb{L}_{p}(s,l_{2})}^{p}ds = N I_{t}^{1-2\beta}\|g\|_{\mathbb{L}_{p}(\cdot,l_{2})}^{p}(t).
\end{align*}
The lemma is proved.
\end{proof}


A version of Lemma \ref{lem:solution representation} can be found in \cite{CKK} for $p=2$ and $\alpha, \beta\in (0,1)$.  But solutions spaces are slightly different, and our proof is  more rigorous.

\begin{lem}
                                \label{lem:solution representation}

Let $g\in\mathbb{H}_{0}^{\infty}(T,l_{2})$ and
define
\begin{align}
                    \label{sto sol re}
u(t,x) :=\sum_{k=1}^{\infty}\int_{0}^{t}\int_{\mathbb{R}^{d}}q_{\alpha,\beta}(t-s,x-y)g^{k}(s,y)dydw_{s}^{k}.
\end{align}
Then $u\in \cH^2_{p}(T)$ and satisfies \eqref{eq:model equation} with the zero initial conditon in the sense of distributions
(see Definition  \ref{def:solution space}).
\end{lem}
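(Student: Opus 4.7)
The plan has three ingredients: (a) the a priori regularity $u\in\bH^2_p(T)$, (b) existence of an $L_p$-continuous modification of $I^{\Lambda-\alpha}u$, and (c) the distributional identity (\ref{eqn 7.15.2}) with $f=\Delta u$ and zero initial data. Since $g\in\bH^\infty_0(T,l_2)$ consists of only finitely many nonzero simple components built out of $C_c^\infty$-functions, and the kernel $q_{\alpha,\beta}(r,\cdot)$ together with its $x$-derivatives lies in $L_1(\bR^d)$ uniformly on $r\in[\varepsilon,T]$ (Lemma \ref{prop:kernel esti. of q}(ii),(v)), the integral (\ref{sto sol re}) is pointwise well defined as an It\^o integral.

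I would establish (c) first. By Remark \ref{rem 7.15}(i), it suffices to verify the pointwise-in-$t$ identity (\ref{eqn 7.15.2}) with $f=\Delta u$ and zero initial data. Fix $\phi\in\cS(\bR^d)$; pairing (\ref{sto sol re}) against $\phi$ and applying stochastic Fubini (justified by the simplicity of $g$ and the smoothness of $q_{\alpha,\beta}(r,\cdot)\ast\phi$) gives
\[(u(t),\phi)=\sum_k\int_0^t\bigl(g^k(s),\,q_{\alpha,\beta}(t-s,\cdot)\ast\phi\bigr)\,dw^k_s.\]
The required identity then reduces to the deterministic claim that for $h\in C_c^\infty(\bR^d)$ and $w(r,x):=(q_{\alpha,\beta}(r,\cdot)\ast h)(x)$ one has
\[w(r)=\frac{r^{\alpha-\beta}}{\Gamma(\alpha-\beta+1)}\,h+I^\alpha_r\Delta w,\]
which is readily checked via the Fourier transform in $x$ using the formula in Lemma \ref{prop:kernel esti. of q}(iv) together with $\partial^\alpha_tp=\Delta p$ (Lemma \ref{prop:kernel esti. of q}(i)) and Corollary \ref{zero converge}. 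Integrating in $s\in(0,t)$ against $dw^k_s$, summing over $k$, and invoking Lemma \ref{lem:s int}(i) to pull $I^\alpha_t$ and $I^{\alpha-\beta}_t$ across the stochastic integrals then delivers (\ref{eqn 7.15.2}).

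For (a) the target is $\|u\|_{\bH^2_p(T)}\leq N\|g\|_{\bH^{c'_0}_p(T,l_2)}$, whose right-hand side is finite. When $\beta\in(1/2,\alpha+1/2)$, Lemma \ref{lem:s int}(ii) applied to a smooth regularization of the kernel shows that $(-\Delta)^{c_0/2}$ commutes with the stochastic integral in (\ref{sto sol re}), so $(-\Delta)^{c_0/2}u$ is again of the form (\ref{sto sol re}) with free term $(-\Delta)^{c_0/2}g$; Theorem \ref{thm:L-P} in the form of Remark \ref{rem 9.3} combined with $c_0+c_1=2$ then yields the $\bL_p$-bound on $(-\Delta)u$. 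For $\beta<1/2$, Lemma \ref{lem:s int}(iii) (with $1-\beta>1/2$) converts $\partial^\beta_t\int_0^t g\,dw$ into an honest It\^o integral with kernel $(t-s)^{-\beta}/\Gamma(1-\beta)$, and Burkholder--Davis--Gundy combined with Lemma \ref{prop:kernel esti. of q}(v) again yields the $\bL_p$-bound on $(-\Delta)u$; the boundary case $\beta=1/2$ is absorbed by the $\kappa$-offset in $c'_0$. The complementary $\bL_p$-bound on $u$ itself follows from the argument of Theorem \ref{lem:solution space}(iv), which uses only (c), or alternatively from a direct stochastic-Fubini plus Burkholder--Davis--Gundy estimate on (\ref{sto sol re}).

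For (b), combining Lemma \ref{lem:s int}(i) with the identity $I^{\Lambda-\alpha}_tq_{\alpha,\beta}=q_{\Lambda,\beta}$ (immediate from (\ref{eq:definition of q_=00007Ba,b=00007D}) and $I^a_tI^b_t=I^{a+b}_t$, noting $\Lambda\geq\beta$) yields
\[\bI^{\Lambda-\alpha}_tu(t,x)=\sum_k\int_0^t\int_{\bR^d}q_{\Lambda,\beta}(t-s,x-y)\,g^k(s,y)\,dy\,dw^k_s.\]
Since $\Lambda\geq\lceil\alpha\rceil$, the kernel $q_{\Lambda,\beta}$ is more regular in $t$ than $q_{\alpha,\beta}$, and a Kolmogorov continuity argument based on Burkholder--Davis--Gundy and the $t$-regularity of $q_{\Lambda,\beta}$ supplied by Lemma \ref{prop:kernel esti. of q}(v)--(vi) provides an $L_p$-valued continuous modification; together with (a) this establishes $u\in\cH^2_p(T)$. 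The main obstacle will be (c): one must carefully track initial conditions and the Caputo vs.\ Riemann--Liouville distinction when $\alpha\in(1,2)$, and handle the case $\alpha<\beta$, where $q_{\alpha,\beta}=D^{\beta-\alpha}_tp$ is defined via a fractional derivative rather than an integration.
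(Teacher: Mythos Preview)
Your approach is correct and genuinely different from the paper's. The paper does not separate the argument into (a), (b), (c) at all. Instead it introduces $w(t,x):=I_t^{\alpha-\beta}\sum_k\int_0^t g^k\,dw^k_s$, which lies in $C^{\delta}([0,T];H^m_p)$ for every $m$ since $g\in\bH^\infty_0(T,l_2)$, and then establishes the \emph{deterministic} representation
\[
u(t,x)-w(t,x)=\int_0^t\int_{\bR^d}q(t-s,x-y)\,\Delta w(s,y)\,dy\,ds
\]
via stochastic Fubini and a direct differentiation of $t\mapsto\int_0^t p(t-s,\cdot)\ast w(s)\,ds$. Citing \cite[Lemma 3.5]{KKL2014}, this identifies $u-w$ as the unique solution in $\fH^{\alpha,2}_{p,0}(T)$ of $\partial_t^\alpha(u-w)=\Delta u$, from which the distributional identity and, crucially, the fact that $u$ itself has an $H^2_p$-valued \emph{continuous} version follow at once. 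Thus the paper gets something stronger than your (a)+(b) in one stroke, without invoking Theorem~\ref{thm:L-P} or Kolmogorov continuity; the Littlewood--Paley machinery is saved for the sharp estimates in Theorem~\ref{thm:model eqn}. Your route, by contrast, verifies (\ref{eqn 7.15.2}) directly through the Mittag--Leffler recurrence $E_{\alpha,1+\alpha-\beta}(z)=1/\Gamma(1+\alpha-\beta)+zE_{\alpha,1+2\alpha-\beta}(z)$, which is self-contained and avoids citing the deterministic theory of \cite{KKL2014}, at the cost of a more fragmented argument.

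Two minor points. First, your identity ``$I^{\Lambda-\alpha}_t q_{\alpha,\beta}=q_{\Lambda,\beta}$'' is a notational slip: in (\ref{eq:definition of q_=00007Ba,b=00007D}) the first subscript of $q_{\alpha,\beta}$ refers to the fundamental solution $p$ of $\partial_t^{\alpha}=\Delta$, so changing it to $\Lambda$ would change $p$; what you actually get (and what you need) is $I^{\Lambda-\alpha}_t q_{\alpha,\beta}=I_t^{\Lambda-\beta}p$ with the \emph{same} $p$. Second, the tool for ``pulling $I^\alpha_t$ and $I^{\alpha-\beta}_t$ across the stochastic integrals'' in (c) is stochastic Fubini (as in \cite[Lemma 2.7]{krylov2011ito}), not Lemma~\ref{lem:s int}(i), which only commutes $I^\alpha$ with the sum $\sum_k$. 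Your sketch of (a) for $\beta<1/2$ is also vague; the cleanest route there is the one the paper uses in Case~2 of Theorem~\ref{thm:model eqn}: convert to $\partial_t^\alpha u=\Delta u+\bar f$ with $\bar f\in\bL_p(T)$ via Lemma~\ref{lem:pde approach} and apply the deterministic estimate---though since $g\in\bH^\infty_0(T,l_2)$ is smooth, a crude BDG bound after moving $\Delta$ onto $g$ also suffices.
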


\begin{proof}
Let $(t,x) \in [0,T] \times \bR^d$. Set
$$
v(t,x):=\sum_{k=1}^{\infty}\int_{0}^{t}g^{k}(s,x)dw_{s}^{k},\quad w(t,x):=I_t^{\alpha-\beta}v(t,x),
$$
where $I_t^{\alpha-\beta}v=D_{t}^{\beta-\alpha}v$  if $\alpha <
\beta$. Note that since $g\in \bH^{\infty}_0(T,l_2)$, by the
Kolmogorov continuity theorem
$$
v\in C^{1/2-\varepsilon}([0,T],H^m_p)
$$ for any $\varepsilon>0$ and $m$.
Thus $w\in C^{\delta}([0,T],H_p^m)$ for some $\delta>0$ (see (\ref{extra con})).

By Fubini's theorem if $\alpha\geq \beta$ and fractional integration by parts (e.g. \cite[Lemma 2.3]{CKK}) if $\alpha < \beta$,
$$
\int^t_0 I_s^{\alpha-\beta} p(s,x-y)(\int_0^{t-s} g^k(r,y)dw^k_r) ds=\int_{0}^{t} p(t-s,x-y)I_s^{\alpha-\beta} \int_0^s g^{k}(r,y)dw_{r}^{k}ds.
$$
Here  $I^{\alpha}_s p(s,x-y)$ and $I^{\alpha-\beta}_s \int^s_0 g^k(r,y)dw^k_r$  are used to denote $(I^{\alpha-\beta}_t p(\cdot,x-y))(s)$ and
$(I^{\alpha-\beta}_t \int^{\cdot}_0 g^k(r,y)dw^k_r)(s)$, respectively. Thus, using the stochastic Fubini theorem (see \cite[Lemma 2.7]{krylov2011ito})  we get, for each $(t,x)$, $(a.s.)$
\begin{align*}
\int_0^t u(s,x)ds
&= \sum_{k=1}^\infty \int_{\mathbb{R}^{d}}  \int_{0}^{t}I_s^{\alpha-\beta} p(s,x-y)\int_0^{t-s} g^k(r,y)dw^k_r ds dy \\
&=\sum_{k=1}^\infty \int_{\mathbb{R}^{d}} \int_{0}^{t} p(t-s,x-y)I_s^{\alpha-\beta} \int_0^s g^{k}(r,y)dw_{r}^{k}dy ds \\
&= \int_{0}^{t}\int_{\mathbb{R}^{d}}  p(t-s,x-y)w(s,y)dy ds.
\end{align*}
Due to
the continuity with respect to $t$, for each $x$ we get
$$
\int_0^t u(s,x)ds= \int_{0}^{t}\int_{\mathbb{R}^{d}}  p(t-s,x-y)w(s,y)dy ds, \quad \forall t \leq T~\text{$(a.s.)$}
$$
and therefore $(a.s.)$
\begin{align}
                        \label{u partial tp}
u(t,x) &=\frac{\partial}{\partial t}\int_{0}^{t}
\int_{\mathbb{R}^{d}}  p(t-s,x-y) w(s,y)dy ds,~\quad (a.e.) \,\,
t\leq T.
\end{align}
In other words, the above equality holds $(a.e.)$ on $\Omega \times [0,T] \times \bR^d$.

Next we claim that
\begin{align}
                    \label{rep claim}
u(t,x)-w(t,x)= \int_{0}^{t}\int_{\mathbb{R}^{d}}q(t-s,x-y) \Delta
w(s,y)dy ds
\end{align}
$(a.e.)$ on $\Omega \times [0,T] \times \bR^d$.
By the definition of the differentiation, for each $(\omega,t,x)$,
\begin{eqnarray*}
&& \frac{\partial}{\partial t} \int_{0}^{t} \int_{\mathbb{R}^{d}}p(t-s,x-y)w(s,y)dy ds \\
&=&\lim_{h \downarrow 0}\frac{1}{h}\int_{t}^{t+h} \int_{\mathbb{R}^{d}}\left(p(t+h-s,x-y) \right)w(s,y)dy ds  \\
&&+\lim_{h\downarrow 0}\int_{0}^{t} \int_{\mathbb{R}^{d}}\left[\frac{p(t+h-s,x-y)-p(t-s,x-y)}{h}\right]w(s,y)dy ds.
\end{eqnarray*}
By the mean value theorem, the integration by parts, and Lemma \ref{prop:kernel esti. of q}(i) and (ii),
\begin{eqnarray*}
&&\lim_{h\downarrow 0}\int_{0}^{t} \int_{\mathbb{R}^{d}}\left[\frac{p(t+h-s,x-y)-p(t-s,x-y)}{h}\right]w(s,y)dy ds\\
&=&\lim_{h\downarrow 0}\int_{0}^{t} \int_{\mathbb{R}^{d}}\frac{\partial p}{\partial t}(t+\theta h-s, x-y)w(s,y)dy ds, \quad \theta\in (0,1)\\
&=&\lim_{h\downarrow 0}\int_{0}^{t} \int_{\mathbb{R}^{d}} q(t+\theta h-s,x-y) \Delta w(s,y)dy ds\\
&=&\int^t_0 \int_{\mathbb{R}^{d}} q(t-s) \Delta w(s,y)dy ds.
\end{eqnarray*}
For the last equality above we used   the $L_1$-continuity of the integrable function \cite[Theorem 9.5]{Ru}, which implies that
for  any $f \in L_1([0,t+\varepsilon])$,  where $\varepsilon>0$, it holds that
$\lim_{h \to 0} \int_0^t | f(s+h) -f(s)|dt =0$.

On the other hand, due to Corollary \ref{zero converge},
\begin{eqnarray*}
\lim_{h \downarrow 0}\frac{1}{h}\int_{t}^{t+h}
\int_{\mathbb{R}^{d}}\left(p(t+h-s,x-y) \right)w(s,y)dy ds =w(t,x).
\end{eqnarray*}
Thus  (\ref{rep claim}) is proved due to (\ref{u partial tp}), and
from   (\ref{rep claim}) it easily follows that $u$ has a
$H^2_p$-valued continuous version since $g \in \bH_0^\infty(T,l_2)$.
It only remains to show that $u$ satisfies (\ref{eq:model
equation}). By representation formula (\ref{rep claim}), it follows
that $u-w\in \fH^{\alpha,2}_{p,0}(T)$ $(a.s.)$, and
\begin{eqnarray}
                   \label{det sol rep}
\partial_t^\alpha (u-w)
&=& \Delta (u-w) +  \Delta w(t,x) \\
& =&\Delta u    \nonumber
\end{eqnarray}
in $\fL_p(T)$. See Remark \ref{rem deterministic} for spaces
$\fH^{\alpha,2}_{p,0}(T)$ and $\fL_p(T)$. Actually in (\cite[Lemma
3.5]{KKL2014}), it is proved that (\ref{rep claim}) gives the unique
solution to (\ref{det sol rep}) in the space
$\fH^{\alpha,2}_{p,0}(T)$ if $\Delta w$ is sufficiently smooth.
However one can easily check that this representation holds even if
$\Delta w \in L_p([0,T] \times\bR^{d})$ by using an approximation argument.
It follows from (\ref{eqn 7.15.2}) and Remark \ref{rem
deterministic} that for any $\phi\in C^{\infty}_c(\bR^d)$, $(a.s.)$
$$
(u(t)-w(t),\phi)=I^{\alpha}(\Delta u,\phi), \quad
\text{$(a.e.)$}\,\, t\leq T.
$$
 Taking $(w(t),\phi)$ to the right hand side of the equality and
using the continuity of $u$ with respect to $t$,
we get
 $$
 (u(t),\phi)=I^{\alpha}_t(\Delta u, \phi) +I^{\alpha-\beta}_t\int^t_0 (g^k,\phi)dw^k_s, \quad  \, \forall t\leq T~ \text{$(a.s.)$}.
 $$
Therefore $u$ is a solution to (\ref{eq:model equation}) in the
sense of distributions
 because $u$ itself is an $H^2_p$-valued continuous process.
The lemma is proved.
\end{proof}

Recall, for $\kappa\in (0,1)$,
$$
c'_0=c'_0(\kappa)=\frac{(2\beta-1)_+}{\alpha} +\kappa 1_{\beta=1/2} \quad \in [0,2).
$$


\begin{thm}
                        \label{thm:model eqn}

Let $\gamma\in\mathbb{R}$ and $p\geq 2$. Suppose  $g\in\mathbb{H}_{p}^{\gamma+c'_{0}}(T,l_{2})$ for some $\kappa>0$. Then,
equation \eqref{eq:model equation} with zero initial condition
has a unique solution $u\in\mathcal{H}_{p}^{\gamma+2}(T)$ in the sense
of distributions, and for this solution
we have
\begin{equation}
                    \label{eq:model a priori}
\|u\|_{\cH_{p}^{\gamma+2}(T)}\leq N\|g\|_{\mathbb{H}_{p}^{\gamma+c'_{0}}(T,l_{2})},
\end{equation}
where $N=N(d,p,\alpha,\beta ,\kappa, T)$.
Furthermore, if $\beta > 1/2$ then
\begin{equation}
                    \label{model t ind}
\|u_{xx}\|_{\mathbb{H}_{p}^{\gamma}(T)}\leq N\| \Delta^{c'_0/2}g\|_{\mathbb{H}_{p}^{\gamma}(T,l_{2})},
\end{equation}
where $N=N(d,p,\alpha,\beta)$ is independent of $T$.
\end{thm}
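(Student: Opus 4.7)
The plan is to reduce to $\gamma=0$ via the isometry $(1-\Delta)^{\gamma/2}:\cH_p^{\gamma+2}(T)\to\cH_p^2(T)$ from Theorem \ref{lem:solution space}(i); if $u$ satisfies the equation with free term $g$, then $(1-\Delta)^{\gamma/2}u$ satisfies it with $(1-\Delta)^{\gamma/2}g$. I first establish the estimates (\ref{eq:model a priori}) and (\ref{model t ind}) on the dense subclass $g\in\bH_0^\infty(T,l_2)$, for which Lemma \ref{lem:solution representation} already supplies a solution $u\in\cH_p^2(T)$ via the stochastic convolution (\ref{sto sol re}). Density of $\bH_0^\infty(T,l_2)$ in $\bH_p^{c'_0}(T,l_2)$ together with completeness of $\cH_p^2(T)$ (Theorem \ref{lem:solution space}(iii)) then yields existence in general, while uniqueness is immediate from linearity and (\ref{eq:model a priori}).

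\textbf{Sharp estimate for $\beta>1/2$.}
For smooth $g$, applying Burkholder--Davis--Gundy to (\ref{sto sol re}) (after commuting $(-\Delta)^{c_1/2}$ inside the stochastic integral) yields
\[
\|(-\Delta)^{c_1/2}u\|_{\bL_p(T)}^p\leq N\,\bE\!\int_{\bR^d}\!\int_0^T|\cT g(t,x)|^p\,dt\,dx,
\]
where $c_1=2-c'_0=2-(2\beta-1)/\alpha$ and $\cT$ is the sublinear operator of Section 3. The parabolic Littlewood--Paley inequality of Theorem \ref{thm:L-P}, applied pathwise in $\omega$, bounds the right side by $N\|g\|_{\bL_p(T,l_2)}^p$ with constant independent of $T$. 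Since the Fourier multiplier $(-\Delta)^{c'_0/2}$ commutes with the convolution-in-space solution map, replacing $g$ by $(-\Delta)^{c'_0/2}g$ upgrades this to $\|\Delta u\|_{\bL_p(T)}\leq N\|(-\Delta)^{c'_0/2}g\|_{\bL_p(T,l_2)}$, which is (\ref{model t ind}) at $\gamma=0$. The control of $\|u\|_{\bL_p(T)}$ follows from Theorem \ref{lem:solution space}(iv) with $f=0$, and Theorem \ref{lem:solution space}(ii) handles the supremum term in the $\cH_p^2(T)$ norm.

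\textbf{The case $\beta<1/2$ via pathwise reduction.}
Here $c'_0=0$, and by Remark \ref{rem 7.15}(ii) the equation is equivalent to $u(t)=I_t^\alpha(\Delta u+\bar f)(t)$ with $\bar f(t):=\frac{1}{\Gamma(1-\beta)}\sum_k\int_0^t(t-s)^{-\beta}g^k(s)\,dw^k_s$. Lemma \ref{lem:pde approach} provides $\bar f\in\bH_p^\gamma(T)$ together with $\|\bar f\|_{\bH_p^\gamma(T)}\leq N\|g\|_{\bH_p^\gamma(T,l_2)}$. The equivalence in Remark \ref{rem deterministic} then places $u(\omega,\cdot,\cdot)$ in $\fH^{\alpha,\gamma+2}_{p,0}(T)$ for a.e.~$\omega$ as a strong solution of the deterministic fractional heat equation $\partial_t^\alpha u=\Delta u+\bar f$. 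The deterministic maximal $L_p$-regularity (see \cite{zacher2005maximal,KKL2014}) gives $\|u\|_{\fH_p^{\gamma+2}(T)}\leq N\|\bar f\|_{\fH_p^\gamma(T)}$ pathwise, and integrating in $\omega$ yields (\ref{eq:model a priori}).

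\textbf{The borderline $\beta=1/2$: expected main obstacle.}
This critical case is the chief technical difficulty, since Theorem \ref{thm:L-P} is stated under the strict inequality $\beta>1/2$ and Remark \ref{rem 7.15}(ii) is unavailable. A direct inspection of Lemma \ref{lem:L2 result} reveals that the sole source of trouble at $\beta=1/2$ is the borderline integral
\[
|\xi|^{2c_1}\int_{|\xi|^{-2/\alpha}}^{1}|\xi|^{-4}t^{-1}\,dt \;\sim\; (2/\alpha)\,|\xi|^{2c_1-4}\log|\xi|,
\]
which diverges if $c_1=2$ but is bounded once $c_1=2-\kappa$ for any $\kappa>0$, thanks to the dominating factor $|\xi|^{-2\kappa}$. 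Taking $c_1=2-c'_0=2-\kappa$ (exactly the definition of $c'_0$ at $\beta=1/2$) makes the proofs of Lemmas \ref{lem:L2 result}--\ref{lem:forth lemma} and Theorem \ref{thm:L-P} go through verbatim with constants depending on $\kappa$. With this extension the argument of the second paragraph applies and delivers (\ref{eq:model a priori}) at $\beta=1/2$; the $T$-independent estimate (\ref{model t ind}) is not asserted for $\beta=1/2$, consistent with the loss of a factor of $T^\kappa$ incurred by this perturbation.
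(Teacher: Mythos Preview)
Your overall structure—reduction to $\gamma=0$, construction on the dense class $\bH_0^\infty(T,l_2)$, then approximation—matches the paper, as do your arguments for $\beta>1/2$ (via Theorem \ref{thm:L-P}) and $\beta<1/2$ (via Remark \ref{rem 7.15}(ii) and the deterministic theory). Two small issues: (a) your uniqueness claim is circular as stated, since (\ref{eq:model a priori}) is only established for the \emph{constructed} solution, not as an a priori bound on arbitrary solutions; the paper closes this by observing that the difference of two solutions solves the homogeneous deterministic equation $\partial_t^\alpha w=\Delta w$ and invoking \cite[Theorem 2.9]{KKL2014}. (b) In applying Theorem \ref{lem:solution space}(iv) you write ``with $f=0$'', but for the model equation the abstract $f$ of (\ref{eqn 7.15}) is $\Delta u$, and the paper uses the already obtained bound (\ref{model t ind}) on $\Delta u$ to close the estimate.

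Your handling of $\beta=1/2$ is a genuinely different route. The paper avoids revisiting Section 3 altogether: it sets $\delta=\kappa\alpha/2$ and $\tilde\beta=1/2+\delta$, defines $v$ by the stochastic convolution with kernel $q_{\alpha,\tilde\beta}$, applies the already proved Case $\beta>1/2$ to get $\|v\|_{\cH_p^2(T)}\leq N\|g\|_{\bH_p^{c'_0}(T,l_2)}$ (since $(2\tilde\beta-1)/\alpha=\kappa=c'_0$), and then observes that $I_t^\delta v$ solves (\ref{eq:model equation}) and hence equals $u$ by uniqueness. Your proposal instead re-runs the Littlewood--Paley machinery at $\beta=1/2$ with the sub-sharp exponent $c_1=2-\kappa$. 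This is viable—the logarithmic divergence in Lemma \ref{lem:L2 result} is indeed killed by $|\xi|^{-2\kappa}$, and the tail integrals in Lemmas \ref{lem:third lemma}--\ref{lem:forth lemma} converge precisely because $c_1<2$—but ``verbatim'' overstates it: the scaling exponent $\alpha-\beta-\alpha c_1/2$ in (\ref{eq:5-12-1}) shifts by $\alpha\kappa/2$, and each subsequent integral must be rechecked with the modified powers. The paper's perturbation trick buys a two-line proof with no such bookkeeping; your approach is more transparent about where the $\kappa$-loss enters but carries more verification overhead.
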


\begin{proof}
Due to the isometry $(I-\Delta)^{\gamma /2}:H^{\gamma}_p \to L_p$, we only need to prove the case $\gamma=0$.

Recall that as discussed in Remark   \ref{rem deterministic} for the
deterministic case,  our sense of solutions introduced in Definition
\ref{def:solution space} coincides with the one in \cite{KKL2014}.
Therefore the uniqueness result easily follows from the
deterministic result (\cite[Theorem 2.9]{KKL2014}, cf.
\cite{zacher2005maximal}). Therefore  it is sufficient to prove  the
existence of the solution  and estimates (\ref{eq:model a priori})
and (\ref{model t ind}).

{\bf{Step 1}}. First, assume $g\in\mathbb{H}_{0}^{\infty}(T,l_{2})$.
Define
$$
u(t,x)=\sum_{k=1}^{\infty}\int_{0}^{t}\int_{\mathbb{R}^{d}}q_{\alpha,\beta}(t-s,y)g^{k}(s,x-y)dydw_{s}^{k}.
$$

Then by Lemma \ref{lem:solution representation},
$u\in \cH^2_{p}(T)$ is a solution to  equation \eqref{eq:model equation} with the zero initial condition.
Thus we only need to prove the estimates. We divide the proof acoording to the range of $\beta$.

\textbf{Case 1:} $\beta > \frac{1}{2}$.

Due to the  inequality  (e.g. p.41 of \cite{Krylov2008}).
$$
\|u_{xx}\|_{\bL_{p}(T)}  \leq N \|\Delta u\|_{\bL_{p}(T)},
$$
 to get  \eqref{model t ind}, it suffices to show
\begin{align}
                    \label{delta u}
\|\Delta u\|_{\bL_{p}(T)}\leq N\| \Delta^{c'_0/2}g\|_{\bL_p(T,l_{2})}.
\end{align}
Denote
$$
v=(-\Delta)^{c'_0 /2}u, \quad \bar{g}=(-\Delta)^{c'_0 /2}g .
$$
By the Burkholder-Davis-Gundy
inequality and  Remark \ref{rem:extension of cT},
\begin{align*}
\|\Delta u\|^p_{\bL_p(T)}=\left\Vert (-\Delta)^{(2-c'_0)/2}v\right\Vert _{\mathbb{L}_{p}(T)}^{p}
 & \leq N\mathbb{E}\int_{0}^{T}\int_{\mathbb{R}^{d}}\left|\mathcal{T}\bar{g}(t,x)\right|^{p}dxdt\\
 & \leq N\mathbb{E}\int_{0}^{T}\int_{\mathbb{R}^{d}}|\bar{g}(t,x)|_{l_{2}}^{p}dxdt,
\end{align*}
where $N=N(d,p,\alpha,\beta)$.

Next we prove (\ref{eq:model a priori}).
By Lemma \ref{lem:solution space}(iv) and (\ref{delta u}),
\begin{align}
                    \notag
\|u\|^p _{\bL_{p}(T)}
&\leq N\int_{0}^{T}(T-s)^{\theta-1} \left(\|\Delta u\|^p_{\bL_p(s)}
+\|g\|^p_{\bL_p(s,l_2)} \right)ds \\
                    \notag
&\leq N\int_{0}^{T}(T-s)^{\theta-1} \|g\|^p_{\bH^{c'_0}_p(s,l_2)} ds\\
                    \label{mo u est}
&\leq N\|g\|^p_{\bH_p^{c_0'}(T,l_2)}\int^T_0(T-s)^{\theta-1}ds \leq N\|g\|^p_{\bH_p^{c_0'}(T,l_2)}.
\end{align}
Combining (\ref{model t ind}),  (\ref{mo u est}), and (\ref {eq:7.2.1}),  we get
(\ref{eq:model a priori}).


\textbf{Case 2:} $\beta<\frac{1}{2}$.

In this case,  ${c_0'}=0$ and we apply the result of the deterministic
equation from \cite{KKL2014}.  By Remarks  \ref {rem 7.15}(ii) and \ref{rem deterministic},  $u$ satisfies
$$
\partial_{t}^{\alpha}u=\Delta u+\bar{f}
$$
in the sense of \cite[Definition 2.4]{KKL2014}, where
$$
\bar{f}(t)=\frac{1}{\Gamma(1-\beta)}\sum_k \int^t_0(t-s)^{-\beta}g^k(s)dw^k_s.
$$
Due to \cite[Theorem 2.9]{KKL2014} and Lemma \ref{lem:pde approach},
$$
\|u\|^p_{\bH^2_p(T)}\leq N\|\bar{f}\|^p_{\bL_p(T)}\leq N\|g\|^p_{\bL_p(T,l_2)},
$$
which  togegher with  (\ref {eq:7.2.1})  yields \eqref{eq:model a priori}.

\textbf{Case 3:} $\beta=\frac{1}{2}$.

Put $\delta=\frac{\kappa \alpha}{2}$. Write $\tilde{\beta}=\frac{1}{2}+\delta$ and define
$$
v(t,x)=\sum_{k=1}^{\infty}\int_{0}^{t}\int_{\mathbb{R}^{d}}q_{\alpha,\tilde{\beta}}(t-s,x-y)g^{k}(s,y)dydw_{s}^{k}.
$$
Since $0<\delta<\alpha$
and $\frac{1}{2}<\tilde{\beta}<2$, the result from Case 1 with ${c_0'}=(2 \tilde \beta -1)/\alpha =\kappa $
implies that $v\in \cH^2_{p}$  satisfies
\begin{align*}
\partial_{t}^{\alpha}v(t,x)=\Delta v(t,x)+\sum_{k=1}^{\infty}\partial_{t}^{\tilde{\beta}}\int_{0}^{t}g^{k}(s,x)dw_{s}^{k},
\end{align*}
with the zero initial condition and
$$
\left\Vert v\right\Vert _{\cH_{p}^{2}(T)}\leq N\left\Vert g\right\Vert _{\mathbb{H}_{p}^{c'_{0}}(T,l_{2})}.
$$
%
Since $I^{\delta}_tv$ satisfies \eqref{eq:model equation}, by  the uniqueness of solutions, we conclude that $I_{t}^{\delta}v(t,x)=u(t,x)$.
Therefore,
\begin{align*}
\left\Vert u\right\Vert _{\cH_{p}^{2}(T)} & =\|I_{t}^{\delta}v\|_{\cH_{p}^{2}(T)} \leq N\left\Vert v\right\Vert _{\cH_{p}^{2}(T)}\leq N\left\Vert g\right\Vert _{\mathbb{H}_{p}^{c'_{0}}(T,l_{2}).}
\end{align*}
Thus the theorem is proved if $g\in\mathbb{H}_{0}^{\infty}(T,l_{2})$.

{\bf{Step 2}}.
For general $g\in\mathbb{H}_{p}^{{c_0'}}(T,l_{2})$, take a
sequence  $g_{n}\in\mathbb{H}_{0}^{\infty}(T,l_{2})$ so that
$g_{n}\rightarrow g$ in $\mathbb{H}_{p}^{{c_0'}}(T,l_{2})$.
Define $u_{n}$ as the solution of equation \eqref{eq:model equation}
with $g_{n}$ in place of $g$.
Then
\begin{equation}
\|u_{n}\|_{\cH_{p}^{2}(T)}\leq N\|g_{n}\|_{\mathbb{H}_{2}^{{c_0'}}(T,l_{2})},\label{eq:model thm proof 1}
\end{equation}
\begin{equation}
            \label{eq:model thm proof 2}
            \|u_{n}-u_{m}\|_{\cH_{p}^{2}(T)}\leq N\|g_{n}-g_{m}\|_{\mathbb{H}_{p}^{{c_0'}}(T,l_{2})}.
\end{equation}
Thus, $u_{n}$ converges to $u$ in $\cH_{p}^{2}(T)$ and $u$ becomes a solution to equation \eqref{eq:model equation}. Indeed, to check $u$ is a solution,  let $\phi \in \cS$ and then  we have
$$
(\bI^{\Lambda-\alpha}_tu_n(t), \phi)
=I_{t}^{\Lambda}\left(\Delta u_n(t,\cdot),\phi\right)+\sum_{k=1}^{\infty}I_{t}^{\Lambda-\beta}\int_{0}^{t}\left(g^{k}_n(s,\cdot),\phi\right)dw_{s}^{k}, \quad \forall t\leq T.
$$
Taking the limit and using (\ref{eq:model thm proof 2}) we conclude that $I^{\Lambda-\alpha}u$ has a continuous version and therefore the above equality holds for all $t\leq T$ (a.s.) with $u$ and $g$ in place of $u_n$ and $g_n$ respectively.
The theorem is proved.
\end{proof}

\mysection{Proof of Theorem \ref{thm:main results non-div}}
                            \label{pf non-div thm}

First we introduce a version of method of continuity used in this article.   Later we will take $L_0=\Delta$ and $\Lambda_0=0$.

\begin{lem}[Method of continuity]
Let $L_0$, $L_1$ be continuous operators from $\cH_p^{\gamma+2}(T)$ to $\bH_p^\gamma(T)$ and
$\Lambda_0$, $\Lambda_1$ be continuous operators from $\cH_p^{\gamma+2}(T)$ to $\bH_p^{\gamma+c_0'}(T,l_2)$.
For $\lambda \in [0,1]$ and $u \in \cH_p^{\gamma+2}(T)$, denote $L_\lambda u= \lambda L_1 u + (1-\lambda)L_0u$
and $\Lambda_\lambda u= \lambda \Lambda_1 u + (1-\lambda)\Lambda_0 u$.
Suppose that for any $f \in \bH_p^\gamma(T)$ and $g \in \bH_p^{\gamma+c_0'}(T,l_2)$ the equation
\begin{align*}
\partial_{t}^{\alpha}u
=L_0u+f+\partial_{t}^{\beta}\int_{0}^{t} (\Lambda^k_0 u+g^{k})dw_{s}^{k}
\end{align*}
with zero initial codition has a solution $u$ in $\cH_{p}^{\gamma+2}(T)$.
Also assume that  if     $u \in \cH_{p}^{\gamma+2}(T)$ has zero initial condition and satisfies (in the sense of distributions) the equation
\begin{equation}
      \label{eqn 8.25.7}
\partial_{t}^{\alpha}u
=L_\lambda u+f+\partial_{t}^{\beta}\int_{0}^{t}(\Lambda^k_\lambda u +g^{k})dw_{s}^{k},
\end{equation}
then the following   ``a priori estimate" holds:
\begin{align}
                \label{824 a pr}
\|u\|_{\cH_p^{\gamma+2}(T)} \leq N_0 \left(\|f\|_{\bH_p^\gamma(T)} + \|g\|_{\bH_p^{\gamma+c_0'}(T,l_2)}\right),
\end{align}
where $N_0$ is independent of $\lambda$, $u$, $f$, and $g$.
Then for any $\lambda\in [0,1]$, $f \in \bH_p^{\gamma}(T)$, and
$g \in \bH_p^{\gamma+c_0'}(T,l_2)$ the equation
\begin{align}
    \label{eqn 8.25.1}
\partial_{t}^{\alpha}u
=L_{\lambda} u+f+\partial_{t}^{\beta}\int_{0}^{t}(\Lambda^k_{\lambda} u +g^{k})dw_{s}^{k}
\end{align}
with zero initial condition has a unique solution $u$ in $\cH_{p}^{\gamma+2}(T)$.
\end{lem}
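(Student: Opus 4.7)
I would follow the classical method of continuity. Set
\begin{align*}
\mathcal{A}:=\bigl\{\lambda\in[0,1]:\ \eqref{eqn 8.25.1}\ \text{admits a solution in}\ \cH_p^{\gamma+2}(T)\ \text{for every}\ f\in\bH_p^\gamma(T),\ g\in\bH_p^{\gamma+c'_0}(T,l_2)\bigr\}.
\end{align*}
By hypothesis $0\in\mathcal{A}$, and uniqueness at every $\lambda\in[0,1]$ is immediate from the a priori estimate \eqref{824 a pr} applied to the difference of two solutions (which satisfies \eqref{eqn 8.25.7} with zero free terms), so it suffices to show $\mathcal{A}=[0,1]$.

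The decisive step is to establish openness of $\mathcal{A}$ with a step size independent of the base point. Fix $\lambda_0\in\mathcal{A}$ and, given $f,g$ and $v\in\cH_p^{\gamma+2}(T)$, define $\Phi_\lambda(v):=u$ as the unique $\cH_p^{\gamma+2}(T)$-solution (furnished by $\lambda_0\in\mathcal{A}$ and the uniqueness just noted) of
\begin{align*}
\partial_t^\alpha u = L_{\lambda_0}u + \bigl[f+(L_\lambda-L_{\lambda_0})v\bigr] + \partial_t^\beta\int_0^t\Bigl(\Lambda^k_{\lambda_0}u + \bigl[g^k+(\Lambda^k_\lambda-\Lambda^k_{\lambda_0})v\bigr]\Bigr)\,dw_s^k.
\end{align*}
Continuity of the $L_i,\Lambda_i$ ensures the new free terms lie in $\bH_p^\gamma(T)$ and $\bH_p^{\gamma+c'_0}(T,l_2)$ respectively, so $\Phi_\lambda$ is a well-defined self-map of $\cH_p^{\gamma+2}(T)$, and any fixed point of $\Phi_\lambda$ is a solution of \eqref{eqn 8.25.1} at $\lambda$.

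To make $\Phi_\lambda$ a contraction for $\lambda$ close to $\lambda_0$, observe that $\Phi_\lambda(v_1)-\Phi_\lambda(v_2)$ satisfies \eqref{eqn 8.25.7} at level $\lambda_0$ with zero initial condition and free terms $(L_\lambda-L_{\lambda_0})(v_1-v_2)=(\lambda-\lambda_0)(L_1-L_0)(v_1-v_2)$ together with the analogous $\Lambda$-expression. Applying \eqref{824 a pr} yields
\begin{align*}
\|\Phi_\lambda(v_1)-\Phi_\lambda(v_2)\|_{\cH_p^{\gamma+2}(T)}\leq N_0|\lambda-\lambda_0|(K_L+K_\Lambda)\|v_1-v_2\|_{\cH_p^{\gamma+2}(T)},
\end{align*}
where $K_L,K_\Lambda$ bound the norms of $L_1-L_0:\cH_p^{\gamma+2}(T)\to\bH_p^\gamma(T)$ and $\Lambda_1-\Lambda_0:\cH_p^{\gamma+2}(T)\to\bH_p^{\gamma+c'_0}(T,l_2)$. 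Choosing $\varepsilon:=[2N_0(K_L+K_\Lambda+1)]^{-1}$ makes $\Phi_\lambda$ a strict contraction whenever $|\lambda-\lambda_0|<\varepsilon$, and Banach's fixed point theorem supplies the desired solution; hence $[\lambda_0,\lambda_0+\varepsilon)\cap[0,1]\subset\mathcal{A}$.

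Since $\varepsilon$ is independent of $\lambda_0$, iterating from $0$ reaches $\lambda=1$ in finitely many steps, giving $\mathcal{A}=[0,1]$. The only delicate point I foresee is securing the finite bounds $K_L,K_\Lambda$: bare continuity of the $L_i,\Lambda_i$ does not automatically provide global Lipschitz constants between the two Banach spaces, but in the intended applications the maps are linear with coefficients controlled by Assumption \ref{assu:common} and \ref{assu:non-div}, so the required bounds are evident—this is the one place where linearity (or a uniform Lipschitz hypothesis) of the $L_i,\Lambda_i$ is implicitly used.
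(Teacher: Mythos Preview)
Your proof is correct and follows essentially the same route as the paper's: define the set of admissible $\lambda$, obtain uniqueness from the a priori estimate, set up a fixed-point problem at a base point $\lambda_0$ with perturbed free terms $(\lambda-\lambda_0)(L_1-L_0)v$ and $(\lambda-\lambda_0)(\Lambda_1-\Lambda_0)v$, and use the a priori estimate to get a contraction constant proportional to $|\lambda-\lambda_0|$ with a factor independent of $\lambda_0$. The paper carries out the iteration explicitly (defining $u^{n+1}$ from $u^n$ and showing the sequence is Cauchy) rather than invoking Banach's fixed point theorem, but this is a cosmetic difference. Your closing remark is also on point: the paper likewise passes from ``continuity'' of $L_i,\Lambda_i$ to a bound on $\|(L_1-L_0)(\cdot)\|$ and $\|(\Lambda_1-\Lambda_0)(\cdot)\|$, which tacitly uses that these operators are linear (so that continuous $=$ bounded).
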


\begin{proof}
The uniqueness easily follows from (\ref{824 a pr}).
Let  $J$ be the set of all $\lambda\in [0,1]$ for which equation (\ref{eqn 8.25.1}) has a  solution in $\cH^{\gamma+2}_{p}(T)$ for any $f \in \bH_p^{\gamma}(T)$ and $g \in \bH_p^{\gamma+c_0'}(T,l_2)$. By the assumption $0\in J$.  Thus to prove the lemma it suffices to show that there exists  $\varepsilon>0$ depending only on $N_0$ and the boundedness of the operators $L_i$ and $\Lambda_i$ ($i=0,1$) such that $\lambda \in J$ whenever $\lambda_0\in J$ and $|\lambda-\lambda_0|<\varepsilon$.

 Let $\lambda_0 \in [0,1]$ and  $\lambda\in [0,1]$. Fix  $u^0\in \cH^{\gamma+2}_{p}$.
By the assumption,  we can inductively define  $u^{n+1} \in \cH^{\gamma+2}_{p}(T)$  as the solution to
\begin{eqnarray}
          \nonumber
\partial_t^\alpha u^{n+1}
&=&L_{\lambda_0}u^{n+1} +(-L_{\lambda_0}u^n +L_{\lambda}u^n+f)\\
&&+\partial_{t}^{\beta}\int_{0}^{t} (\Lambda_{\lambda_0} u^{n+1}+(-\Lambda_{\lambda_0} u^n+\Lambda_{\lambda} u^n+g^{k}) )dw_{s}^{k}.
          \label{824 n eqn}
          \end{eqnarray}
 Note that for $u^{n+1}-u^n \in \cH^{\gamma+2}_{p}(T)$ satisfies
\begin{align*}
\partial_t^\alpha (u^{n+1} -u^n)
&=L_{\lambda_0}(u^{n+1}-u^n)+(\lambda-\lambda_0)(L_1-L_0)(u^n-u^{n-1})  \\
&~~~ +\partial_{t}^{\beta}\int_{0}^{t} \Lambda^k_{\lambda_0} (u^{n+1}-u^n)
+(\lambda-\lambda_0)(\Lambda_{1}-\Lambda_0)(u^n-u^{n-1}) dw_{s}^{k}.
\end{align*}
By a priori estmate (\ref{824 a pr}), we have
\begin{align*}
&\|u^{n+1}-u^n\|_{\cH_p^{\gamma+2}(T)}  \\
&\leq N_0|\lambda-\lambda_0|  (\|(L_1-L_0)(u^n-u^{n-1}))\|_{\bH^{\gamma}_p(T)}
+\|(\Lambda_1-\Lambda_0)(u^n-u^{n-1})\|_{\bH^{\gamma+c'_0}_p(T,l_2)})
 \\
&\leq N|\lambda-\lambda_0| \|u^n-u^{n-1}\|_{\cH_p^{\gamma+2}(T)},
\end{align*}
where the second inequality is due to the continuity of operators
$L_0$, $L_1$, $\Lambda_0$, and $\Lambda_1$. Note that the constant
$N$ above does not depend on $\lambda$ and $\lambda_0$ as well. Thus
if $\varepsilon N<1/2$ and $|\lambda-\lambda_0|\leq \varepsilon$
then $u_n$ becomes a Cauchy sequence in $\cH^{\gamma+2}_{p,0}(T)$
and therefore the limit $u$ of $u^n$ becomes a solution to equation
(\ref{eqn 8.25.1}), which is easily checked by taking the limit in
(\ref{824 n eqn}). The lemma is proved.
\end{proof}

Next we present  an estimate for  a deterministic equation of non-divergence type. We use the space $\fH^{\alpha,\gamma+2}_{p,0}(T)$  introduced in Remark \ref{rem deterministic}.

\begin{lem}
                        \label{det11}
Let $a^{ij}$ be   given as in (\ref{cond 8.26}), that is
\begin{equation}
               \label{a}
a^{ij}(t,x)=\sum_{n=1}^{M_0}
a_{n}^{ij}(t,x)1_{(\tau_{n-1},\tau_n]}(t)
\end{equation}
where  $\tau_n$ and $a^{ij}_n$ are non-random,  and $a^{ij}$  satisfy (\ref{elliptic}) and (\ref{multiplier}) with the constants $\delta_0$ and $K_3$ given there. Then for any solution $u\in \fH^{\alpha,\gamma+2}_{p,0}(T)$  to the deterministic equation
\begin{equation}
        \label{eqn 8.26.2}
        \partial^{\alpha}_t u=a^{ij}u_{x^ix^j}+f
        \end{equation}
        in $\fH^{\gamma}_p(T)$,
         it holds that
        \begin{equation}
               \label{apriori det}
        \|u\|_{\fH^{\gamma+2}_p(T)}\leq N \|f\|_{\fH^{\gamma}_p(T)},
        \end{equation}
        where $N$ depends only on $\alpha$, $p$, $\gamma$, $\delta_0$, $K_3$, $T$, $M_0$, and the modulus of continuity of $a^{ij}_n$. In particular, $N$ depends on $M_0$ but independent of the choice of $\tau_1,\cdots, \tau_{M_0-1}$.
\end{lem}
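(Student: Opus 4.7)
The plan is to induct on the number of time pieces $M_0$, reducing everything to the $L_p$-estimate for uniformly continuous coefficients, which is part of the existing deterministic $L_p$-theory. For the base case $M_0=1$, the coefficient $a^{ij}=a_1^{ij}$ is uniformly continuous on $[0,T]\times\bR^d$ and satisfies the uniform ellipticity (\ref{elliptic}) together with the multiplier bound (\ref{multiplier}). Hence (\ref{apriori det}) is a direct consequence of the deterministic $L_p$-theory for time-fractional equations, e.g. \cite[Theorem 2.9]{KKL2014}, cf. \cite{zacher2005maximal}; the constant depends precisely on the data listed, in particular on the modulus of continuity of $a^{ij}_1$.

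For the inductive step, assume the estimate holds whenever the coefficient has at most $M_0-1$ pieces. Let $u\in\fH^{\alpha,\gamma+2}_{p,0}(T)$ satisfy (\ref{eqn 8.26.2}) with $M_0$ pieces. First apply the base case to the restriction on $[0,\tau_1]$ (where $a^{ij}=a^{ij}_1$ is uniformly continuous) to obtain
\begin{equation*}
\|u\|_{\fH^{\gamma+2}_p(\tau_1)}\leq N\|f\|_{\fH^{\gamma}_p(\tau_1)}.
\end{equation*}
For $t\in(\tau_1,T]$, use the Volterra representation $u(t)=I^{\alpha}_t(a^{ij}u_{x^ix^j}+f)(t)$ (valid since $u\in \fH^{\alpha,\gamma+2}_{p,0}(T)$, as in Remark \ref{rem deterministic}) and split the integral at $\tau_1$:
\begin{equation*}
u(t)=U(t)+V(t),\qquad U(t):=\frac{1}{\Gamma(\alpha)}\int_0^{\tau_1}(t-s)^{\alpha-1}\bigl(a^{ij}u_{x^ix^j}+f\bigr)(s)\,ds,
\end{equation*}
with $V\equiv 0$ on $[0,\tau_1]$. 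Because the kernel $(t-s)^{\alpha-1}$ is regular for $s\leq\tau_1<t$, the term $U$ is smooth in $t$ on $(\tau_1,T]$ and its $\fH^{\gamma+2}_p((\tau_1,T])$-norm is controlled by $\|a^{ij}u_{x^ix^j}+f\|_{\fH^\gamma_p(\tau_1)}$, hence by $N\|f\|_{\fH^\gamma_p(\tau_1)}$ via the multiplier property (\ref{multi}) with (\ref{multiplier}) and the already-obtained bound on $u|_{[0,\tau_1]}$. After translating time by $\tau_1$, the remainder $V$ satisfies on $[0,T-\tau_1]$ a time-fractional equation of the same form whose coefficient has only $M_0-1$ pieces, with modified free term involving the shifted $f$ and $U$. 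The inductive hypothesis applied to this shifted $V$, combined with the bound on $U$ and on $u|_{[0,\tau_1]}$, yields (\ref{apriori det}).

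The main obstacle is the nonlocality of $\partial^\alpha_t$, which prevents a naive ``restart'' at $\tau_1$: the Caputo derivative at time $t>\tau_1$ depends on the entire history of $u$. Working through the Volterra integral formulation circumvents this, but requires two technical verifications: (i) that $U$ lies in $\fH^{\gamma+2}_p((\tau_1,T])$ with the claimed bound, using the smoothness of $(t-s)^{\alpha-1}$ away from its singularity plus the bound on $u_{x^ix^j}$ already available on $[0,\tau_1]$; and (ii) that the shifted function $V(\tau_1+\cdot)$ genuinely satisfies the hypotheses of the inductive statement on $[0,T-\tau_1]$ with zero initial data and a free term controllable by $\|f\|_{\fH^\gamma_p(T)}$. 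Both verifications rely on the calculus of Section 2 relating $\partial^{\alpha}_t$ to $I^{\alpha}_t$, and on the multiplier estimates. Since every bound along the way depends only on $T$, $M_0$, and the moduli of continuity of the $a^{ij}_n$, the final constant $N$ is independent of the particular values of $\tau_1,\ldots,\tau_{M_0-1}$, as claimed.
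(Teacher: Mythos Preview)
Your approach is genuinely different from the paper's and has a real gap in the inductive step. The paper does \emph{not} induct on the number of time pieces. Instead it first proves (\ref{apriori det}) for all $\gamma$ when $a^{ij}$ is independent of $t$ (constant coefficients $\to$ small-oscillation perturbation $\to$ partition of unity in $x$), and only then handles the piecewise-in-$t$ structure by invoking the corresponding step of \cite[Theorem 2.9]{KKL2014}. In particular, your base case $M_0=1$ is already not covered by the reference you cite: \cite[Theorem 2.9]{KKL2014} gives (\ref{apriori det}) for $\gamma=0$, and the extension to $\gamma\neq 0$ (which requires the multiplier bound (\ref{multiplier})) is precisely what the paper's Steps~1--3 establish.

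The more serious problem is your claim that $U(t)=\frac{1}{\Gamma(\alpha)}\int_0^{\tau_1}(t-s)^{\alpha-1}(a^{ij}u_{x^ix^j}+f)(s)\,ds$ lies in $\fH^{\gamma+2}_p((\tau_1,T])$. The kernel $(t-s)^{\alpha-1}$ is smooth in $t$ for $s\le\tau_1<t$, but this yields only \emph{time} regularity; it cannot manufacture two spatial derivatives. Since $a^{ij}u_{x^ix^j}+f$ is merely $H^{\gamma}_p$-valued, $U(t)$ is a Bochner integral of $H^{\gamma}_p$-valued functions and hence lies in $H^{\gamma}_p$, not $H^{\gamma+2}_p$. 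Consequently you cannot bound $a^{ij}U_{x^ix^j}$ in $\fH^{\gamma}_p$, and the free term in the shifted equation for $V$ is not known to be in the right space, so the inductive hypothesis does not apply. A viable repair is to replace the $U/V$ splitting by the following: extend $a^{ij}_1$ to all of $[0,T]$, let $u_1\in\fH^{\alpha,\gamma+2}_{p,0}(T)$ solve $\partial^{\alpha}_t u_1=a^{ij}_1(u_1)_{x^ix^j}+f$, and set $w=u-u_1$. Then $w$ vanishes on $[0,\tau_1]$, so its time-shift $\tilde w(\cdot)=w(\tau_1+\cdot)$ genuinely satisfies a zero-initial-data equation on $[0,T-\tau_1]$ with $M_0-1$ pieces and free term $(a^{ij}-a^{ij}_1)(u_1)_{x^ix^j}(\tau_1+\cdot)\in\fH^{\gamma}_p$, to which the inductive hypothesis applies. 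Note that this repair requires \emph{existence} (not just the a~priori estimate) for the extended $a^{ij}_1$-problem, which in turn needs the $\gamma\neq 0$ theory you skipped in the base case.
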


\begin{proof}
If $\gamma=0$ then this lemma is proved in \cite[Theorem 2.9]{KKL2014} under the condition that $a^{ij}_n$ are uniformly continuous in $(t,x)$, but without the condition $|a^{ij}|_{B^{|\gamma|}}\leq K_3$.  The proof for the case $\gamma \neq 0$ depends on  the one for $\gamma=0$.

\vspace{3mm}

We  divide the proof into several steps.

(Step 1).  Assume that $a^{ij}$ are independent of $(t,x)$.   In
this case (\ref{apriori det}) holds  due to \cite[Theorem
2.9]{KKL2014} (or see \cite{zacher2005maximal,Za}) if $\gamma=0$.
For the case $\gamma \neq 0$ it is enough to apply the operator
$(1-\Delta)^{\gamma/2}$ to the equation.

We show that  (\ref{apriori det})   leads to
\begin{equation}
            \label{indT}
            \|u_{xx}\|_{\fH^{\gamma}_p(T)}\leq N_0
            \|f\|_{\fH^{\gamma}_p(T)},
            \end{equation}
            where $N_0=N_0(\alpha,p,\gamma,\delta_0)$, and thus  independent of $T$.  Obviously, to prove the independency of $T$ we only need to consider the case $\gamma=0$, and for this case, it is enough to notice that $v(t,x):=u(Tt,T^{\alpha/2}x)$ satisfies $\partial^{\alpha}_t v=a^{ij}v_{x^ix^j}+T^{\alpha}f(Tt,T^{\alpha/2}x)$ in
            $\fL_p(1)$ and use the result for $T=1$.

(Step 2). (perturbation in $x$).  Assume that $a^{ij}$ depends only
on $x$. Recall we are assuming
\begin{equation}
     \label{K_3}
\sup_{i,j,\omega}|a^{ij}(\cdot)|_{B^{|\gamma|}} \leq  K_{3}.
\end{equation}
In this step we  prove that there exists a positive constant
$\varepsilon_{1}=\varepsilon_1(N_0)$, thus independent of $T$ and
$K_{3}$,  so that \eqref{indT} holds with new constant
$N=N(N_0,K_3)$ if
\begin{align}
                \label{eps_1:perturbation}
\sup_{i,j, t,x,y}|a^{ij}(x)-a^{ij}(y)| \leq \varepsilon_{1}.
\end{align}
Set
$a_{0}^{ij}:=a^{ij}(0)$, and rewrite (\ref{eqn 8.26.2})  as
$$
\partial_{t}^{\alpha}u=a^{ij}_{0}u_{x^{i}x^{j}}+f+(a^{ij}-a^{ij}_{0})u_{x^{i}x^{j}}.
$$
By the result of Step 1, for each $t\leq T$
\begin{align}
            \label{eq:6.25.1}
            \|u_{xx}\|_{\fH^{\gamma}_{p}(t)} \leq N_0 \left(\|f\|_{\fH^{\gamma}_{p}(t)}
            +\|(a^{ij}-a^{ij}_0)u_{x^ix^j}\|_{\fH^{\gamma}_{p}(t)}\right).
\end{align}
By \eqref{multi},
$$
\|(a^{ij}-a_{0}^{ij})u_{x^{i}x^{j}}\|_{H_{p}^{\gamma}}
\leq N(d,\gamma) |a^{ij}-a_{0}^{ij}|_{B^{|\gamma|}}  \|u_{xx}\|_{H_{p}^{\gamma}}.
$$
It follows from (\ref{eq:6.25.1}) that
\begin{equation}
           \label{leadto}
\|u_{xx}\|_{\fH^{\gamma}_{p}(t)} \leq  N_0 \|f\|_{\fH^{\gamma}_{p}(T)}
+N_0N(d,\gamma) |a^{ij}(t,\cdot)-a_{0}^{ij}|_{B^{|\gamma|}}  \|u_{xx}\|_{\fH^{\gamma}_{p}(t)}.
\end{equation}
Hence we  get (\ref{indT}) with $2N_0$ in place of $N_0$ if
\begin{align}
            \label{claim:coefficient}
        |a^{ij}-a_{0}^{ij}|_{B^{|\gamma|}} \leq \frac{1}{2N(d,\gamma)N_{0}}=:\varepsilon_2.
\end{align}
Now we take $\varepsilon_1=\varepsilon_2/2$ and assume  (\ref{eps_1:perturbation}) holds.  Fix a small constant $\rho>0$ so that $\rho^{(|\gamma|)\wedge 1}K_3\leq \varepsilon_2/2$, and set
$$
a^{ij}_{\rho}(t,x):= a^{ij}(\rho x),\quad u_{\rho}(t,x):=u(\rho^{\frac{2}{\alpha}}t,\rho x), \quad
f_{\rho}(t,x):=\rho^{2}f(\rho^{\frac{2}{\alpha}}t,\rho x).
$$
Note  that  $u_{\rho}(t,x)$ satisfies
\begin{align*}
\partial_{t}^{\alpha}u_{\rho} = a_{\rho}^{ij}(u_{\rho})_{x^{i}x^{j}}+f_{\rho}, \quad  \quad t\leq  \rho^{-2/{\alpha}}T.
\end{align*}
By the definition of $B^{|\gamma|}$, (\ref{K_3}), and the choice of $\rho$,
\begin{align*}
|a^{ij}_{\rho}(\cdot)-a^{ij}_{\rho}(0)|_{B^{|\gamma|}} \leq \sup_x |a^{ij}-a^{ij}_0|  + 1_{\gamma \neq 0}\rho^{(|\gamma|)\wedge 1}K_{3}\leq \varepsilon_2.
\end{align*}
Thus  by the above arguments which lead to (\ref{leadto}) and (\ref{claim:coefficient}), we get for each $t\leq  \rho^{-2/{\alpha}}T$,
$$
  \|(u_{\rho})_{xx}\|_{\fH^{\gamma}_{p}(t)} \leq 2N_0 \|f_{\rho}\|_{\fH^{\gamma}_p(t)}.
  $$
  Consequently, for each $t\leq T$,
  $$
  \|u_{xx}\|_{\fH^{\gamma}_p(t)}\leq N(K_3, N_0) \|f\|_{\fH^{\gamma}_p(t)}.
  $$
As before, this and (\ref{eq:solution space estimate 1}) yield  (\ref{apriori det}).  Before moving to next step we emphasize that we take $\varepsilon_1=(4N(d,\gamma)N_0)^{-1}$ and therefore it does not depend on $T$ and $K_3$.

(Step 3). (Partition of unity). We still assume $a^{ij}$ is independent of $t$.  Choose a $\delta_1$ so that
\begin{align}
                \label{eq:freezing}
|a^{ij}(x)-a^{ij}(y)| \leq \frac{\varepsilon_{1}}{2}
\end{align}
whenever $|x-y|\leq 4\delta_{1}$. For this $\delta_{1}$, take a sequence of functions $\zeta_{n}\in C_{c}^{\infty}(\mathbb{R}^{d})$, $n\in\mathbb{N}$, so that  $0\leq \zeta_n \leq 1$, the support of $\zeta_{n}$ lies in $B_{\delta_{1}}(x_{n})$ for some $x_{n}\in\mathbb{R}^{d}$,
$$
\sup_{x\in\mathbb{R}^{d}} \sum_{n\in\mathbb{N}} \left| D_{x}^{\mathbf{n}}\zeta_{n}(x) \right| \leq M(\delta_{1},\mathbf{\mathbf{n}})<\infty
$$
for any multi-index $\mathbf{n}\in\mathbb{Z}^{d}$ and,
$$
\inf_{x\in\mathbb{R}^{d}}\sum_{n\in\mathbb{N}}\left| \zeta_{n}(x) \right| \geq \vartheta >0.
$$
It is well-known (\cite[Lemma 6.7]{Krylov1999}) that for any $\gamma \in\mathbb{R}$ and $\mathbf{n}\in\mathbb{N}$,
\begin{align}
                \label{eq:partition of unity}
                \|h\|_{H_{p}^{\gamma}}^{p}\leq N \sum_{n\in\mathbb{N}}\|h \zeta_{n}\|_{H_{p}^{\gamma}}^{p}
\leq N \|h\|_{H_{p}^{\gamma}}^{p},\quad \sum_{n\in\mathbb{N}} \|u D_{x}^{\mathbf{n}}\zeta_{n}\|_{H_{p}^{\gamma}}^{p} \leq N \|u\|_{H_{p}^{\gamma}}^{p}
\end{align}
where $N$ depend only on $d$,$\gamma$, $M(\delta_{1}$, $\mathbf{n})$, and $\vartheta$.
Take a nonnegative $\eta\in C_{c}^{\infty}(\mathbb{R}^{d})$ so that $0\leq \eta\leq 1$, $\eta=1$ on $B_{1}$, and $\eta =0$ outside $B_{2}$. Write
$$
u_{n}=u\zeta_{n},\quad \eta_{n}(x)=\eta\left(\frac{x-x_{n}}{\delta_{1}}\right)
$$
and define
\begin{eqnarray}
       \label{defa}
a_{n}^{ij}(x):=\eta_{n}(x)a^{ij}(x)+(1-\eta_{n}(x))a^{ij}(x_{n})
\end{eqnarray}
Then, because $\eta_n=1$ on the support of $\zeta_n$, $u_{n}(t,x)$ satisfies
$$
\partial_{t}^{\alpha}u_{n}(t,x) = a_{n}^{ij}(u_{n})_{x^{i}x^{j}}+\bar{f}_{n},
$$
where
$$
\bar{f}_{n}(t,x)
:=f(t,x)\zeta_{n}+(a^{ij}_nu_{x^{i}x^{j}}\zeta_{n}-a_{n}^{ij}(u_{n})_{x^{i}x^{j}}).
$$
Note that
$$
a^{ij}_nu_{x^{i}x^{j}}\zeta_{n}-a^{ij}_n(u_{n})_{x^{i}x^{j}} = a^{ij}(2 u_{x^{i}}(\zeta_{n})_{x^{i}}+u(\zeta_{n})_{x^{i}x^{j}}).
$$
Due to (\ref{eq:freezing}), for each $x,y\in \bR^d$,
\begin{align*}
|a_{n}^{ij}(t,x)-a_{n}^{ij}(t,y)|&= |\eta_n(x)(a^{ij}(x)-a^{ij}(x_n))-\eta_n(y)(a^{ij}(y)-a^{ij}(x_n))|\\
&\leq  |\eta_n(x)(a^{ij}(x)-a^{ij}(x_n))|+|\eta_n(y)(a^{ij}(y)-a^{ij}(x_n))| \leq \varepsilon_{1}.
\end{align*}
Also note that $(a^{ij}_n)$ satisfies the uniform ellipticity condition with the same constant $\delta_0$.
Therefore, by the result from Step 2  and (\ref{eq:partition of unity}), for each $t\leq T$,
\begin{align}
       \nonumber
&\|u\|_{\fH_{p}^{\gamma+2}(t)}^{p} \leq N\sum_{n\in\mathbb{N}}\|u_{n}\|_{\fH_{p}^{\gamma+2}(t)}^{p}
\leq N\sum_{n\in\mathbb{N}} \|\bar{f}_{n}\|_{\fH_{p}^{\gamma}(t)}^{p}\\
&\leq N\|u\|^p_{\fH^{\gamma+1}_p(t)}+N\|f\|^p_{\fH_p(t)}  \nonumber\\
&\leq  \varepsilon \|u\|_{\fH_{p}^{\gamma+2}(t)}^{p} +N(\varepsilon)  \|u\|_{\fH_{p}^{\gamma}(t)}^{p} +N  \|f\|^p_{\fH_{p}^{\gamma}(t)}. \label{eqn 8.26.8}
\end{align}
We take $\varepsilon = 1/2$, and to drop the term $\|u\|_{\fH^{\gamma}_p(t)}$ above we use (\ref{eq:solution space estimate 1}), which implies
\begin{eqnarray*}
\|u\|^p_{\fH^{\gamma}_p(t)}
 &\leq& N \int_{0}^{t}(t-s)^{\theta-1} \|a^{ij} u_{x^ix^j} + f\|^p_{\fH^{\gamma}_p(s)}ds \\
&\leq& N \int_{0}^{t}(t-s)^{\theta-1} (\|u\|^p_{\fH^{\gamma+2}_p(s)}+ \|f\|^p_{\fH^{\gamma}_p(s)}) ds \\
&\leq& N \int_{0}^{t}(t-s)^{\theta-1} (\|u\|^p_{\fH^{\gamma}_p(s)}+ \|f\|^p_{\fH^{\gamma}_p(s)}) ds
\end{eqnarray*}
where the last inequality is due to (\ref{eqn 8.26.8}). Therefore by applying fractional Gronwall's lemma (\cite[Corollary 2]{YGD}),
we obtain (\ref{apriori det}).  
We remark that up to this step, 
the constant $N$ of  (\ref{apriori det}) depends only on $\delta_0$, $p$, $K_3$, $\alpha$, $\gamma$, $T$, and the modulus of continuity of $a^{ij}$.

(Step 4) (general case). Recall that in Step 3  we proved the lemma
when $a^{ij}$ are independent of $t$.  For the general case, it is
enough to repeat Steps 5 and 6 of the proof of \cite[Theorem
2.9]{KKL2014}.  Indeed, in \cite{KKL2014} the lemma is proved when
$\gamma=0$, and the proof is first given for time-independent
$a^{ij}$, and then this result is extended for the general case.
This method of generalization works exactly same  for any $\gamma\in
\bR$.

\end{proof}

We continue the proof of Theorem \ref{thm:main results non-div}.

 \vspace{2mm}

{\bf{Case A. Linear case}}. Suppose $f$ and $g$ are independent of
$u$, and $b^i=c=\mu^{ik}=\nu^k=0$.  To apply the method of
continuity, for each $\lambda\in [0,1]$ denote
$$
(a^{ij}_{\lambda})=\lambda (a^{ij})+(1-\lambda)I_{d\times d},  \quad \sigma^{ijk}_{\lambda}=\lambda \sigma^{ijk},
$$
where $I_{d\times d}$ is the $d\times d$-identity matrix.
Then
$$
L_{\lambda}u:=\lambda a^{ij}u_{x^ix^j}+(1-\lambda)\Delta u=a^{ij}_{\lambda}u_{x^ix^j}
$$
and
$$
  \Lambda^k_{\lambda}u:=\lambda \sigma^{ijk}u_{x^ix^j}=\sigma^{ijk}_{\lambda}u_{x^ix^j}.
$$

Due to the method of continuity and Theorem \ref{thm:model eqn}, we only need
to prove a priori estimate (\ref{824 a pr}) holds  given that a solution $u\in \cH^{\gamma+2}_{p,0}(T)$ to equation   (\ref{eqn 8.25.7}) already exists.
Note that for any $\lambda \in [0,1]$ the coefficients
 $a^{ij}_{\lambda}$ and $\sigma_{\lambda}$ satisfy the
  same conditions assumed for $a^{ij}$ and $\sigma^{ijk}$, that is,
  conditions specified in Assumptions \ref{assu:common} and \ref{assu:div} with the same constants used there.
    This shows that by considering $a^{ij}_{\lambda}$ and $\sigma_{\lambda}$
    in place of  $a^{ij}$ and $\sigma$, we only need to prove (\ref{824 a pr})
     for $\lambda=1$.

  By Theorem \ref{thm:model eqn}, the equation
\begin{equation}
                    \label{v}
\partial_{t}^{\alpha}v(t,x)= \Delta v (t,x)+\sum_{k=1}^{\infty}\partial_{t}^{\beta}\int_{0}^{t}(\sigma^{ijk}u_{x^ix^j}+g^{k})dw_{s}^{k}
\end{equation}
has a unique solution $v\in \cH^{\gamma+2}_{p,0}(T)$, and moreover
\begin{equation}
     \label{1}
\|v\|_{\cH^{\gamma+2}_p(T)} \leq
N\|g\|_{\bH^{\gamma+c_0'}_p(T,l_{2})}.
\end{equation}
Indeed, (\ref{1}) is obvious if $\beta \geq 1/2$ because $\sigma^{ijk}=0$ in this case.
If $\beta<1/2$, then by Theorem \ref{thm:model eqn} and Lemma \ref{lem:pde approach}, for each $t\leq T$,
\begin{eqnarray*}
\|v\|^p_{\cH^{\gamma+2}_p(t)}&\leq& N I^{1-2\beta}_t \|\sigma^{ij}u_{x^ix^j}+g\|^p_{\bH^{\gamma}_p(\cdot,l_2)}(t)\\
&\leq& N I^{1-2\beta}_t \|u\|^p_{\cH^{\gamma+2}_p(\cdot)}(t)+N
\|g\|^p_{\bH^{\gamma}_p(T,l_2)}.
\end{eqnarray*}
Therefore (\ref{1}) follows from the fractional Gronwall's lemma.

Note that  $\bar{u}=u-v$ satisfies the  equation
\begin{align*}
\partial_{t}^{\alpha}\bar{u}(t,x)= a^{ij}\bar{u}_{x^{i}x^{j}}(t,x)
+a^{ij}(t)v_{x^{i}x^{j}}(t,x)-\Delta v(t,x) +f(t,x).
\end{align*}
By Lemma \ref{det11},
\begin{eqnarray}
\|\bar{u}\|_{\cH^{\gamma+2}_p(T)}&\leq& N
\|a^{ij}(t)v_{x^{i}x^{j}}(t,x)-\Delta v(t,x)
+f(t,x)\|_{\bH^{\gamma}_p(T)} \nonumber\\
&\leq& N \|v\|_{\bH^{\gamma+2}_p(T)}+N\|f\|_{\bH^{\gamma}_p(T)}.
\label{2}
\end{eqnarray}
Since $u=\bar{u}+v$, the desired estimate follows from (\ref{1}) and
(\ref{2}).

\subsection*{B: General case.}

 Write
$$
\bar{f}:=b^{i}u_{x^{i}}+cu+f(u) ,\quad \bar{g}^{k}:=\mu^{ik} u_{x^{i}}+\nu^{k}u+g^{k}(u).
$$
Note that $\mu^{ik}= 0 $ if ${c_0'}\geq 1$.
Then by (\ref{multi}), (\ref{multi2}), and Assumption \ref{assu:non-div} (iii),
\begin{align*}
&\|\bar{f}(u)-\bar{f}(v)\|_{H_{p}^{\gamma}} + \|\bar{g}(u)-\bar{g}(v)\|_{H_{p}^{\gamma+{c_0'}}(l_{2})} \\
&\leq N \left(\|u-v\|_{H_{p}^{\gamma+1}}+\|\mu^{i}(u-v)_{x^{i}}\|_{H_{p}^{\gamma+{c_0'}}(l_{2})} +\|u-v\|_{H_{p}^{\gamma+{c_0'}}}\right) \\ &\quad\qquad\qquad\qquad\qquad+ \|f(u)-f(v)\|_{H_{p}^{\gamma}}+\|g(u)-g(v)\|_{H_{p}^{\gamma+{c_0'}}(l_{2})} \\
& \leq \varepsilon \|u-v\|_{H_{p}^{\gamma+2}}+N\|u-v\|_{H_{p}^{\gamma}}
\end{align*}
where $N$ depends on $d,p,m,\kappa,K_{3},K_{4}$, and $\varepsilon$.
Hence considering $\bar{f}$ and $\bar{g}^{k}$ in place of $f$ and
$g^{k}$, we may assume $b^{i}=c=\mu^{ik}=\nu^{k}=0$.

For each $u\in\mathcal{H}_{p}^{\gamma+2}(T)$, consider the equation
\begin{align*}
\partial_{t}^{\alpha}v = a^{ij}v_{x^{i}x^{j}}+f(u)+\sum_{k=1}^{\infty}\int_{0}^{t}[\sigma^{ijk}v_{x^{i}x^{j}}+g^{k}(u)]dw_{s}^{k}
\end{align*}
with zero initial condition. By the result of Case A, this equation
admit a unique solution $v\in\mathcal{H}_{p}^{\gamma+2}(T)$. By
denoting $v=\mathcal{R}u$, we can define an operator
$$
\mathcal{R}:\mathcal{H}_{p}^{\gamma+2}(T)\rightarrow\mathcal{H}_{p}^{\gamma+2}(T).
$$
By Lemma \ref{lem:solution space}(ii), \eqref{eq:assumption non-div}, and the result of Case A, for each $t\leq T$,
\begin{align*}
\|\mathcal{R}u-\mathcal{R}v\|_{\mathcal{H}_{p}^{\gamma+2}(t)}^{p} & \leq N_{0}\left(\|f(u)-f(v)\|_{\mathbb{H}_{p}^{\gamma}(t)}^{p}+\|g(u)-g(v)\|_{\mathbb{H}_{p}^{\gamma+{c_0'}}(t,l_{2})}^{p}\right)\\
 & \leq N_{0}\varepsilon^{p}\|u-v\|_{\mathcal{H}_{p}^{\gamma+2}(t)}^{p}+N_{1}\|u-v\|_{\mathbb{H}_{p}^{\gamma}(t)}^{p}\\
 & \leq N_{0}\varepsilon^{p}\|u-v\|_{\mathcal{H}_{p}^{\gamma+2}(t)}^{p}+N_{1}\int_{0}^{t}(t-s)^{\theta-1}\|u-v\|_{\mathcal{H}_{p}^{\gamma+2}(s)}^{p}ds,
\end{align*}
where $N_{1}$ depends also on $\varepsilon$. Next, we fix $\varepsilon$
so that $\Theta:=N_{0}\varepsilon^{p}<2^{-2}$. Then repeating the
above inequality and using the identity
\begin{align*}
\int_{0}^{t} & (t-s_{1})^{\theta-1}\int_{0}^{s_{1}}(s_{1}-s_{2})^{\theta-1}\cdots\int_{0}^{s_{n-1}}(s_{n-1}-s_{n})^{\theta-1}ds_{n}\cdots ds_{1}\\
 &=\frac{\left\{ \Gamma(\theta)\right\} ^{n}}{\Gamma(n\theta+1)}t^{n\theta},
\end{align*}
we get
\begin{align*}
\|\mathcal{R}^{n}u-\mathcal{R}^{n}v\|_{\mathcal{H}_{p}^{\gamma+2}(t)}^{p} & \leq\sum_{k=0}^{n}\binom{n}{k}\Theta^{n-k}\left(T^{\theta}N_{1}\right)^{k}\frac{\left\{ \Gamma(\theta)\right\} ^{k}}{\Gamma(k\theta+1)}\|u-v\|_{\mathcal{H}_{p}^{\gamma+2}(t)}^{p}\\
 & \leq2^{n}\Theta^{n}\max_{k}\left[\frac{\left\{ \Theta^{-1}T^{\theta}N_{1}\Gamma(\theta)\right\} ^{k}}{\Gamma(k\theta+1)}\right]\|u-v\|_{\mathcal{H}_{p}^{\gamma+2}(t)}^{p}\\
 & \leq\frac{1}{2^{n}}N_{2}\|u-v\|_{\mathcal{H}_{p}^{\gamma+2}(t)}^{p}.
\end{align*}
For the second inequality above we use $\sum_{k=0}^{n}\binom{n}{k}=2^{n}$.
It follows that if $n$ is sufficiently large then $\mathcal{R}^{n}$
is a contraction in $\mathcal{H}_{p}^{\gamma+2}(T)$, and this yields all
the claims. The theorem is proved.

\mysection{Proof of Theorem \ref{thm:main results div}}
                    \label{pf div thm}

We first prove a result for a deterministic equation of divergence type.

\begin{lem}
                        \label{det112}
Let $a^{ij}$ be   given as in (\ref{a}) with non-random  $\tau_n$ and $a^{ij}_n$. Suppose $a^{ij}$  satisfy the uniform ellipticity (\ref{elliptic}) and  $a^{ij}_n$ are uniformly continuous in $(t,x)$.  Then for any solution $u\in \fH^{\alpha,1}_{p,0}(T)$  to the deterministic equation
\begin{equation}
        \label{eqn 8.27.2}
        \partial^{\alpha}_t u=D_{x^i}(a^{ij}u_{x^ix^j}+f^i)+h
        \end{equation}
        in $\fH^{-1}_p(T)$,
         it holds that
        \begin{equation}
               \label{apriori det div}
        \|u\|_{\fH^{1}_p(T)}\leq N (\|f^i\|_{\fL_p(T)}+\|h\|_{\fH^{-1}_p(T)}),
        \end{equation}
        where $N$ depends only on $\alpha$, $p$, $\gamma$, $\delta_0$, $T$, $M_0$, and the modulus of continuity of $a^{ij}_n$.
        \end{lem}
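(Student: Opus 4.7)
The plan is to mimic the strategy of Lemma \ref{det11} (the non-divergence case) but carry everything out in divergence form at regularity index $\gamma=-1$. The four steps will be: (i) constant coefficients, (ii) small oscillation in $x$ by freezing, (iii) variable $x$-dependence via partition of unity plus a scaling argument to achieve the small-oscillation hypothesis, and (iv) piecewise-in-time coefficients by repeating Steps~5--6 of the proof of Theorem 2.9 in \cite{KKL2014}.

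For Step (i), when $a^{ij}$ is a constant matrix the divergence form collapses to $D_{x^i}(a^{ij}u_{x^j})=a^{ij}u_{x^ix^j}$, so the equation becomes $\partial^\alpha_t u=a^{ij}u_{x^ix^j}+\tilde f$ with $\tilde f:=D_{x^i}f^i+h\in\fH^{-1}_p(T)$. Applying Lemma \ref{det11} at $\gamma=-1$ (constant coefficients satisfy $|a^{ij}|_{B^1}<\infty$ trivially) gives $\|u\|_{\fH^{1}_p(T)}\le N\|\tilde f\|_{\fH^{-1}_p(T)}\le N(\|f^i\|_{\fL_p(T)}+\|h\|_{\fH^{-1}_p(T)})$, and by the same scaling argument used in Lemma \ref{det11} the constant is independent of $T$ for the top-order piece $\|u_{xx}\|_{\fH^{-1}_p(T)}$.

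For Step (ii), assume $a^{ij}=a^{ij}(x)$ with $\sup_{x,y}|a^{ij}(x)-a^{ij}(y)|\le\varepsilon_1$. Freeze $a^{ij}_0:=a^{ij}(0)$ and rewrite
\begin{equation*}
\partial^\alpha_t u=a^{ij}_0u_{x^ix^j}+D_{x^i}\bigl((a^{ij}-a^{ij}_0)u_{x^j}+f^i\bigr)+h.
\end{equation*}
Applying Step (i) and using $\|(a^{ij}-a^{ij}_0)u_{x^j}\|_{\fL_p(T)}\le\varepsilon_1\|u\|_{\fH^1_p(T)}$, we absorb the perturbation provided $N_0\varepsilon_1<1/2$. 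The small-oscillation hypothesis is then arranged by the dilation $u_\rho(t,x):=u(\rho^{2/\alpha}t,\rho x)$, $a^{ij}_\rho(x):=a^{ij}(\rho x)$, which transforms the equation into one of the same divergence form with data $\rho f^i_\rho$ and $\rho^2 h_\rho$; choosing $\rho$ small and using uniform continuity of $a^{ij}$ gives the required oscillation.

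For Step (iii), take a partition of unity $\{\zeta_n\}$ subordinate to balls of radius $\delta_1$ small enough that $a^{ij}$ has oscillation $\le\varepsilon_1/2$ on each support, and freeze off the coefficient as in \eqref{defa}. The key computation is that $u_n:=u\zeta_n$ satisfies
\begin{equation*}
\partial^\alpha_t u_n=D_{x^i}\bigl(a^{ij}_n(u_n)_{x^j}\bigr)+D_{x^i}\bar f^i_n+\bar h_n,
\end{equation*}
where $\bar f^i_n=\zeta_n f^i-a^{ij}_n u\,\zeta_{n,x^j}$ and $\bar h_n=\zeta_n h-a^{ij}_n u_{x^j}\zeta_{n,x^i}-f^i\zeta_{n,x^i}$. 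Summing the bounds from Step (ii) with the partition-of-unity inequalities \eqref{eq:partition of unity}, then handling the lower-order terms $\|u\|_{\fH^0_p(t)}$ and $\|u\|_{\fH^1_p(t)}$ by an interpolation $\|u\|_{\fH^0_p(t)}\le\varepsilon\|u\|_{\fH^1_p(t)}+N(\varepsilon)\|u\|_{\fH^{-1}_p(t)}$ and the a priori bound \eqref{eq:solution space estimate 1} followed by fractional Gr\"onwall (as in the deterministic version of Theorem \ref{lem:solution space}(iv)), yields \eqref{apriori det div} when $a^{ij}$ is $t$-independent. The transition to piecewise-continuous $a^{ij}(t,x)$ in Step (iv) is then identical to \cite{KKL2014}, working on each interval $(\tau_{n-1},\tau_n]$ and pasting via the integral formulation.

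The main obstacle I anticipate is Step (iii): unlike the non-divergence case, the localization produces a genuine new divergence-form source $D_{x^i}(-a^{ij}_n u\,\zeta_{n,x^j})$ whose $\fL_p$-norm involves only $u$ itself (not its derivatives), so the absorption must be done carefully through interpolation and fractional Gr\"onwall on the $\fH^{-1}_p$-scale rather than the $\fL_p$-scale. Everything else is a direct transcription of the arguments in Lemma \ref{det11}.
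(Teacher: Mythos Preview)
Your overall strategy is close to the paper's, but there is a genuine gap in Step~(iii), and it is precisely \emph{not} where you think the obstacle lies. The divergence-form source $-a^{ij}_n u\,\zeta_{n,x^j}$ that you flag as the difficulty is harmless: its $\fL_p$-norm involves only $\|u\|_{\fL_p}$, which is genuinely lower-order and is handled by interpolation and fractional Gr\"onwall exactly as you describe. The real problem is the commutator term $-a^{ij}_n u_{x^j}\zeta_{n,x^i}$ sitting in your $\bar h_n$. This must be estimated in $H^{-1}_p$, and the naive bound $\|a^{ij}u_{x^j}\zeta_{n,x^i}\|_{H^{-1}_p}\le N\|u_{x}\|_{L_p}\le N\|u\|_{H^1_p}$ produces a top-order term with no smallness, which cannot be absorbed and is not of integral type, so Gr\"onwall does not help either. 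Your sentence ``handling the lower-order terms $\|u\|_{\fH^0_p(t)}$ and $\|u\|_{\fH^1_p(t)}$'' reveals the issue: $\|u\|_{\fH^1_p(t)}$ is the full solution norm you are trying to estimate, not a lower-order term.

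The paper resolves this with a separate estimate: since $a^{ij}$ is merely uniformly continuous (not $B^1$), one approximates $a^{ij}$ uniformly by $C^1$ functions $a^{ij}_m$ and writes
\[
\|a^{ij}u_{x^j}\|_{H^{-1}_p}\le \sup_{x}|a^{ij}-a^{ij}_m|\,\|u_x\|_{L_p}+|a^{ij}_m|_{B^1}\|u_x\|_{H^{-1}_p}\le \varepsilon\|u\|_{H^1_p}+N(\varepsilon)\|u\|_{L_p},
\]
so that this term is effectively of order zero. Only after this do interpolation and Gr\"onwall close the estimate. You need this ingredient.

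Two smaller points. First, your dilation argument in Step~(ii) is shaky: the full $\|u\|_{\fH^1_p(T)}$ bound from Step~(i) depends on $T$, and dilation sends $T\to\rho^{-2/\alpha}T$; the paper simply allows $\varepsilon_2$ to depend on $T$ and skips dilation entirely. Second, the paper organizes the time dependence differently: it starts Step~1 with $a^{ij}=a^{ij}(t)$ (not constant), invoking Lemma~\ref{det11} at $\gamma=-1$, then freezes in $x$ while keeping the full piecewise-in-$t$ structure throughout, so no separate Step~(iv) is needed.
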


\begin{proof}  We divide the proof into three steps.

(Step 1). Let $a^{ij}$ depend only on $t$.  In this case,  (\ref{apriori det div}) is a consequence of (\ref{apriori det}) with $\gamma=-1$,
which is because $\|D_{x^i}f^i\|_{H^{-1}_p}\leq N\|f^i\|_{L_p}$.

(Step 2).  We prove there exists $\varepsilon_2>0$, which depends also on $T$, such that (\ref{apriori det div}) holds if
\begin{equation}
      \label{varepsilon2}
      \sup_{t,x}|a^{ij}(t,x)-a^{ij}(t,y)|\leq \varepsilon_2.
      \end{equation}
 Denote
$a_{0}^{ij}(t):=a^{ij}(t,0)$, and rewrite the equation as
\begin{align*}
\partial_{t}^{\alpha}u = D_{x^{i}}(a_{0}^{ij}u_{x^{i}}+ \bar{f}^{i})+h
\end{align*}
where
\begin{align*}
\bar{f}  ^{i}:=f^{i}+\sum_{j=1}^{d}\left(a^{ij}-a_{0}^{ij}\right)u_{x^{j}}.
\end{align*}
Note that $a_{0}^{ij}$ is independent of $x$. By the result of Step 1, for each $t\leq T$,
\begin{align*}
\|u\|_{\fH^1_{p}(t)} & \leq N_3(\|f\|_{\fL_{p}(t)}+\|(a^{ij}-a^{ij}_0)u_{x^j}\|_{\fL_{p}(t)}+N\|h\|_{\fH^{-1}_p(t)}).
\end{align*}
Observe that
$$
\|(a^{ij}(t,\cdot)-a_{0}^{ij}(t))u_{x^{i}}(t,\cdot)\|_{L_{p}} \leq  N(d,p) \sup_{t,x}|a^{ij}(t,x)-a_{0}^{ij}(t)|  \|u(t,\cdot)\|_{H_{p}^{1}}.
$$
Therefore, our claim follows  if  (\ref{varepsilon2}) holds with $\varepsilon_2=(2N(d,p) N_3)^{-1}$.

(Step 3). We introduce a partition of unity $\zeta^n$ as in the proof of Lemma \ref{det11}, and define $\eta$ and  $a^{ij}_n$ as in (\ref{defa}) so that each $(a^{ij}_n)$ satisfies  (\ref{varepsilon2}). Note $u^n(t,x)=u\zeta^n$ satisfies
$$
\partial^{\alpha}_tu^n=D_{x^i}(a^{ij}_n u^n_{x^j} +\bar{f}^{n,i})+\bar{h}^n
$$
where
$$
\bar{f}^{n,i}=f^i\zeta^n-a^{ij}u\zeta^n_{x^j}, \quad \quad \bar{h}^n =h\zeta^n-a^{ij}u_{x^j}\zeta^n_{x^i}.
$$
Therefore, using Step 2 and $\|\cdot\|_{H^{-1}_p}\leq N\|\cdot\|_{L_p}$, we get
\begin{eqnarray*}
\|u\|^p_{\fH^1_p(t)}&\leq& N\sum_{n\in \bN}\|u^n\|^p_{\fH^1_p(t)}\leq N\sum_{n\in \bN}(\|\bar{f}^{n}\|^p_{\fL_p(t)}+\|\bar{h}^n\|^p_{\fH^{-1}_p(t)})\\
&\leq& N(\|f^i\|^p_{\fL_p(t)}+\|h\|^p_{\fH^{-1}_p(t)})+N\|u\|^p_{\fL_p(t)}+N\|a^{ij}u_{x^j}\|^p_{\fH^{-1}_p(t)}.
\end{eqnarray*}
Here we claim that for any $\varepsilon>0$,
\begin{equation}
       \label{eqn 8.27.5}
\|a^{ij}u_{x^j}\|_{H^{-1}_p}\leq \varepsilon \|u\|_{H^1_p}+N(\varepsilon)\|u\|_{L_p}.
\end{equation}
Indeed, since $a^{ij}$ are uniformly continuous in $x$ uniformly in $t$, considering appropriate convolution we can take a sequence of $C^1$-functions $a^{ij}_n$ which uniformly converges to $a^{ij}$ with respect to $x$ uniformly in $t$. Thus
\begin{eqnarray*}
\|a^{ij}u_{x^j}\|_{H^{-1}_p}&\leq& \|(a^{ij}_n-a^{ij})u_{x^j}\|_{H^{-1}_p}+ \|a^{ij}_nu_{x^j}\|_{H^{-1}_p}\\
&\leq& \sup_{t,x} |a^{ij}_n-a^{ij}|\|u_x\|_{L_p}+|a^{ij}_n|_{B^1}\|u_x\|_{H^{-1}_p}.
\end{eqnarray*}
This certainly proves (\ref{eqn 8.27.5}). Taking small $\varepsilon$ and using the interpolation
$\|u\|_{L_p}\leq \varepsilon' \|u\|_{H^1_p}+N(\varepsilon')\|u\|_{H^{-1}_p}$, we get for each $t\leq T$,
$$
\|u\|^p_{\fH^1_p(t)}\leq  N\|f^i\|^p_{\fL_p(t)}+N\|h\|^p_{\fH^{-1}_p(t)}+N\|u\|^p_{\fH^{-1}_p(t)}
$$
The last term $\|u\|_{\fH^{-1}_p(t)}$ can be easily dropped as before using  (\ref{eq:solution space estimate 1}) and Gronwall's lemma. The lemma is proved.

\end{proof}

Now we prove Theorem \ref{thm:main results div}.

\vspace{2mm}

(Step 1).  Suppose $f^i,h$ and $g$ are independent
of $u$ and $b^i=c=\nu^{ik}=0$.  In this case, by the method of
continuity and Theorem \ref{thm:model eqn} we only need to show a
priori estimate \eqref{eq:a priori estimate div} holds given that a
solution $u\in \cH^1_{p}(T)$ already exists. See the proof of
Theorem \ref{thm:main results non-div} for details.

In this case  estimate  \eqref{eq:a priori estimate div} follows from Lemma \ref{det112} and the arguments in Case 1 of the proof of Theorem
\ref{thm:main results non-div}.  Indeed, take the function $v\in \cH^1_{p}(T)$ from (\ref{v}), which is a solution to
$$
\partial_{t}^{\alpha}v(t,x)= \Delta v (t,x)+\sum_{k=1}^{\infty}\partial_{t}^{\beta}\int_{0}^{t}(\sigma^{ijk}u_{x^ix^j}+g^{k})dw_{s}^{k}.
$$
By (\ref{1}),
$$
\|v\|_{\cH^{1}_p(T)} \leq
N\|g\|_{\bH^{c_0'-1}_p(T,l_{2})}.
$$
Note that $\bar{u}:=u-v$ satisfies
$$
\partial^{\alpha}_t\bar{u}=D_{x^i}(a^{ij}\bar{u}_{x^j}+\bar{f}^i)+h, \quad \bar{f}^i:=(a^{ij}-\delta^{ij})v_{x^j}.
$$
Thus one can estimate $\|\bar{u}\|_{\cH^1_p(T)}$ using   Lemma \ref{det112}, and this leads to    \eqref{eq:a priori estimate div} since $u=\bar{u}+v$.

(Step 2).  General case.  The proof is almost identical to that of  Case B of the proof of Theorem \ref{thm:main results non-div}. We put
$$
\bar{f}^i=b^iu+f(u), \quad \bar{h}(u)=cu+h(u), \quad \bar{g}^k=\nu^{ik}u+\nu^k u+g^k(u).
$$
Then, as before, one can check these functions satisfy  condition (\ref{eq:assumption div}), and therefore we may assume $b^i=c=\nu^{ik}=\nu^k=0$.
Then, using Step 1, we define the operator $\cR: \cH^1_{p}(T)\to \cH^1_{p}(T)$ so that $v=\cR u$ is the solution to the problem
$$
\partial^{\alpha}_t v=D_{x^i}(a^{ij}v_{x^i}+f^i(u))+h(u) +\partial^{\beta}_t\int^t_0 (\sigma^{ijk}v_{x^ix^j}+g^k(u))dw^k_t
$$
with zero initial condition. After this, using the arguments used in
the proof of Theorem \ref{thm:main results non-div}, one easily
finds that $\cR^n$ is a contraction in $\cH^1_{p}(T)$ if $n$ is
large enough. This proves the theorem.
 \qed

\mysection{SPDE driven by space-time white noise}
                        \label{space-time}

    In this section we assume
    \begin{equation}
         \label{eqn st}
    \beta<\frac{3}{4}\alpha+\frac{1}{2},
    \end{equation}
    and     the space  dimension $d$ satisfies
\begin{equation}
            \label{dimension}
       d<4-\frac{2(2\beta-1)_+}{\alpha}=:d_0.
       \end{equation}
 Note
  $d_0\in (1,4]$ due to (\ref{eqn st}).  If $\beta< \frac{\alpha}{4}+1/2$ then one can take $d=1,2,3$.  Also, $\alpha=\beta=1$ then $d$ must be $1$.

 \vspace{3mm}

 In this section we study the  SPDE
\begin{equation}
     \label{space-time}
  \partial^{\alpha}_t u=\left(a^{ij}u_{x^ix^j}+b^iu_{x^i}+cu+f(u)\right)
    +\partial^{\beta}_t \int^t_0  h(u) \, dB_t
    \end{equation}
    where  the coefficients $a^{ij}$, $b^i$, $c$  are functions depending on
$(\omega,t,x)$,  the functions $f$ and $h$ depend on $(\omega,t,x)$ and the unknown $u$, and
    $B_t$ is a cylindrical Wiener process on $L_2(\bR^d)$.

    Let $\{\eta^k:k=1,2,\cdots\}$ be an orthogonal basis of $L_2(\bR^d)$. Then   (see \cite[Section 8.3]{Krylov1999})
    $$
    dB_t=\sum_{k=1}^{\infty} \eta^k dw^k_t
    $$
    where
   $ w^k_t:=(B_t,\eta^k)_{L_2}$ are independent one dimensional Wiener processes. Hence one can rewrite (\ref{space-time}) as
    $$
    \partial^{\alpha}_t u= \left(a^{ij}u_{x^ix^j}+b^iu_{x^i}+cu+f(u)\right)
    + \sum_{k=1}^\infty\partial^{\beta}_t \int^t_0  g^k(u)\, dw^k_t,
    $$
where
$$
g^k(t,x,u)=h(t,x,u)\eta^k(x).
$$



\begin{lem}
          \label{lemma 1.26}
Assume
\begin{equation}
    \label{algebraic}
    {\kappa_0} \in \left(\frac{d}{2},d\right], \quad 2\leq 2r\leq p, \quad 2r<\frac{d}{d-{\kappa_0}},
    \end{equation}
and  $h(x,u)$, $\xi(x)$ are  functions of $(x,u)$ and $x$ respectively such that $|h(x,u)-h(x,v)|\leq \xi(x) |u-v|$.  
For  $u\in L_p(\bR^d)$, set $g^k(u)=h(x,u(x)) \eta_k(x)$. Then
$$
\|g(u)-g(v)\|_{H^{-{\kappa_0}}_p(l_2)}\leq N \|\xi\|_{L_{2s}}\|u-v\|_{L_p},
$$
where 
$s=r/{(r-1)}$ 
is the conjugate of $r$ and $N=N(r)<\infty$. In particular, if $r=1$ and $\xi$ is bounded, then
$$
\|g(u)-g(v)\|_{H^{-{\kappa_0}}_p(l_2)}\leq N \|u-v\|_{L_p}.
$$
\end{lem}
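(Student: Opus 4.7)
The plan is to reduce the $H^{-\kappa_0}_p(l_2)$-norm of $G := g(u)-g(v)$ to a scalar convolution estimate, exploiting the orthonormality of $(\eta_k)$ in $L_2(\bR^d)$ via Parseval's identity. Set $F(y) := h(y,u(y))-h(y,v(y))$, so that $G^k(y) = F(y)\eta_k(y)$. Since $\|G\|_{H^{-\kappa_0}_p(l_2)} = \||(1-\Delta)^{-\kappa_0/2}G|_{l_2}\|_{L_p}$ and the Bessel potential $(1-\Delta)^{-\kappa_0/2}$ acts componentwise as convolution with the Bessel kernel $G_{\kappa_0}$, I would first observe
$$[(1-\Delta)^{-\kappa_0/2}G]^k(x) = \int_{\bR^d} G_{\kappa_0}(x-y) F(y) \eta_k(y)\,dy = \langle G_{\kappa_0}(x-\cdot)F, \eta_k\rangle_{L_2(\bR^d)}.$$
Summing the squares in $k$ and invoking Parseval for the orthonormal basis $(\eta_k)$ of $L_2(\bR^d)$ then yields the key identity
$$|(1-\Delta)^{-\kappa_0/2}G|_{l_2}^2(x) = \int_{\bR^d} |G_{\kappa_0}(x-y)|^2 |F(y)|^2\,dy = (|G_{\kappa_0}|^2 * |F|^2)(x).$$

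Once this identity is in hand, the rest is exponent bookkeeping with Young and H\"{o}lder. Raising to the $p/2$-th power and integrating in $x$ gives $\|G\|_{H^{-\kappa_0}_p(l_2)}^p = \||G_{\kappa_0}|^2 * |F|^2\|_{L_{p/2}}^{p/2}$. I would then apply Young's convolution inequality with exponents $r$ and $b$ satisfying $1/r + 1/b = 1 + 2/p$ to obtain
$$\||G_{\kappa_0}|^2 * |F|^2\|_{L_{p/2}} \leq \|G_{\kappa_0}\|_{L_{2r}}^2\,\||F|^2\|_{L_b}.$$
The standard asymptotics $|G_{\kappa_0}(x)| \leq N|x|^{\kappa_0 - d}$ near the origin together with exponential decay at infinity, combined with the hypothesis $2r < d/(d-\kappa_0)$, ensure $\|G_{\kappa_0}\|_{L_{2r}} < \infty$. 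Finally, using $|F(y)| \leq \xi(y)|u(y)-v(y)|$ and H\"{o}lder's inequality with exponents $s$ and $p/2$ (noting $1/s + 2/p = (r-1)/r + 2/p = 1/b$ since $s = r/(r-1)$ is the conjugate of $r$), I would get $\||F|^2\|_{L_b} \leq \|\xi\|_{L_{2s}}^2\,\|u-v\|_{L_p}^2$, which combined with the previous estimate yields the claim.

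The step requiring the most care is the Parseval identity: it needs $G_{\kappa_0}(x-\cdot)F(\cdot) \in L_2(\bR^d)$ for a.e.\ $x$, which is not automatic. I would justify this by first treating smooth compactly supported data, where Fubini gives the needed integrability directly, and then extending by density using the finiteness of $|G_{\kappa_0}|^2 * |F|^2$ granted by the subsequent Young bound. The endpoint $r = 1$, $s = \infty$, $\kappa_0 > d/2$ recovers the bounded-$\xi$ special case stated at the end of the lemma, consistent with the left endpoint of the assumed range of $\kappa_0$.
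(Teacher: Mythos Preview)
Your proof is correct and follows essentially the same strategy as the paper: both use Parseval's identity for the orthonormal basis $(\eta_k)$ to reduce $\|g(u)-g(v)\|_{H^{-\kappa_0}_p(l_2)}$ to $\bigl\|(|G_{\kappa_0}|^2 * |F|^2)^{1/2}\bigr\|_{L_p}$, and then finish with H\"older and Young/Minkowski using the same exponents. The only cosmetic difference is the order of operations---the paper applies H\"older pointwise first to separate $\xi$ and then invokes Minkowski (i.e., Young with one $L_1$ factor) on the remaining convolution $|G_{\kappa_0}|^{2r}*|u-v|^{2r}$, whereas you apply Young to $|G_{\kappa_0}|^2*|F|^2$ first and H\"older afterwards.
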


\begin{proof}
It is well-known (e.g. \cite[p.132]{Stein1970}, \cite[Exercise 12.9.19]{Krylov2008}) that there exists a Green function $G(x)$, which decays exponentially fast at infinity and behaves like $|x|^{{\kappa_0}-d}$ so that the equality holds:
$$
\|g(u)-g(v)\|_{H^{-{\kappa_0}}_p(l_2)}=\|\bar{h}\|_{L_p},
$$
where
\begin{eqnarray*}
\bar{h}(x)
&:=&\left(\int_{\bR}|G(x-y)|^2|h(y,u(y))-h(y,v(y))|^2 dy\right)^{1/2}\\
&\leq& \left(\int_{\bR}|G(x-y)|^2\xi^2(y)|u(y)-v(y)|^2dy\right)^{1/2}=:\tilde{h}(x).
\end{eqnarray*}
By H\"older's inequality,
\begin{align*}
|\tilde{h}(x)|
\leq  \|\xi\|_{L_{2s}} \cdot \left(\int_{\bR}|G(x-y)|^{2r}|u(y)-v(y)|^{2r}dy\right)^{1/(2r)}.
\end{align*}
Note that $\|G\|_{L_{2r}} <\infty$ since $2r<\frac{d}{d-{\kappa_0}}$.
Therefore applying Minkowski's inequality, we have
$$
\|\tilde{h}\|_{L_p}\leq N\|\xi\|_{L_{2s}} \|G\|_{2r}\|u-v\|_{L_p} \leq N\|\xi\|_{L_{2s}} \|u-v\|_{L_p}.
$$
The lemma is proved.
\end{proof}

\begin{remark}
                        \label{hu rmk}
By following the proof of Lemma \ref{lemma 1.26}, one can easily check that
$$
\|g(u)\|_{H^{-{\kappa_0}}_p(l_2)}\leq N \|h(u)\|_{L_p}.
$$
\end{remark}

\begin{assumption}
                                 \label{ass 3}
\noindent
              (i) The coefficients $a^{ij},b^i$, and $c$ are    $\mathcal{P}\otimes\mathcal{B}(\mathbb{R})$-measurable.

\noindent
         (ii) The functions $f(t,x,u)$ and $g(t,x,u)$    are $\mathcal{P}\otimes\mathcal{B}(\mathbb{R}^d\times \bR)$-measurable.

\noindent
          (iii) For each $\omega,t,x,u$ and $v$,
          $$
         |f(t,x,u)-f(t,x,v)|\leq K|u-v|, \quad |h(t,x,u)-h(t,x,v)|\leq \xi(t,x)|u-v|,
         $$
         where $\xi$ depends on $(\omega,t,x)$.
 \end{assumption}

Denote
    $$
    f_0=f(t,x,0), \quad h_0=h(t,x,0).
    $$

    \begin{thm}
                              \label{thm space-time}
         Suppose  Assumption \ref{ass 3} holds  and
              $$
        \|f_0\|_{\bH^{-\kappa_0-c'_0}_p(T)}+\|h_0\|_{\bL_p(T)} + \sup_{\omega,t}\|\xi\|_{2s} \leq K<\infty,
       $$
       where $\kappa_0$ and $s$ satisfy
       \begin{equation}
                 \label{eqn 9.10}
       \frac{d}{2}<\kappa_0< \left(2-\frac{(2\beta-1)_+}{\alpha}\right)\wedge d,  \quad \quad  \frac{d}{2\kappa_0-d} <s
       \end{equation}
and $c'_0$ from (\ref{c_0}).
Also assume that the coefficients $a^{ij}$, $b^i$, and $c$ satisfy Assumption \ref{assu:common} and  (\ref{multiplier}) with $\gamma:=-\kappa_0-c'_0$. Then
 equation (\ref{space-time}) with zero initial condition has a unique solution $u\in \cH^{2-\kappa_0-c'_0}_{p}(T)$, and for this solution we have
$$
\|u\|_{\cH^{2-\kappa_0-c'_0}_p(T)}\leq N \|f_0\|_{\bH^{-\kappa_0-c'_0}_p(T)}+N\|h_0\|_{\bL_p(T)}.
$$
\end{thm}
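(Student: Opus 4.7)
The plan is to deduce the theorem directly from Theorem~\ref{thm:main results non-div} by taking $\gamma:=-\kappa_0-c'_0$, so that the target solution space $\cH^{\gamma+2}_p(T)$ coincides with $\cH^{2-\kappa_0-c'_0}_p(T)$. The dimension restriction (\ref{dimension}) combined with the upper bound on $\kappa_0$ in (\ref{eqn 9.10}) makes the interval $(d/2,(2-(2\beta-1)_+/\alpha)\wedge d)$ nonempty, and in the borderline case $\beta=1/2$ I also pick the parameter $\kappa$ in (\ref{c_0}) small enough so that $\kappa_0+c'_0<2$. In particular $\gamma\in(-2,0)$, which yields the continuous embeddings $H^{\gamma+2}_p\hookrightarrow L_p\hookrightarrow H^\gamma_p$ and legitimizes the Sobolev interpolation
\begin{equation*}
\|w\|_{L_p}\le\varepsilon\|w\|_{H^{\gamma+2}_p}+N(\varepsilon)\|w\|_{H^\gamma_p},
\end{equation*}
on which the whole argument rests.

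Next I would verify the hypotheses of Theorem~\ref{thm:main results non-div}. Assumption~\ref{assu:common} and the multiplier bound (\ref{multiplier}) on the deterministic coefficients are imposed by hypothesis with this value of $\gamma$; the bounds on $\sigma^{ijk},\mu^{ik},\nu^k$ in Assumption~\ref{assu:non-div}(ii) are vacuous since those coefficients are absent from (\ref{space-time}). For Assumption~\ref{assu:non-div}(i), $|f(t,x,u)|\le|f_0|+K|u|$ together with $\bH^{\gamma+2}_p(T)\hookrightarrow\bL_p(T)\hookrightarrow\bH^{\gamma}_p(T)$ gives $f(u)\in\bH^\gamma_p(T)$, while Remark~\ref{hu rmk} combined with $|h(t,x,u)|\le|h_0|+\xi|u|$, H\"older's inequality and the same embedding yields $g(u)\in\bH^{-\kappa_0}_p(T,l_2)=\bH^{\gamma+c'_0}_p(T,l_2)$. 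For Assumption~\ref{assu:non-div}(iii), the drift estimate is immediate:
\begin{equation*}
\|f(t,\cdot,u)-f(t,\cdot,v)\|_{H^\gamma_p}\le N\|f(t,\cdot,u)-f(t,\cdot,v)\|_{L_p}\le NK\|u-v\|_{L_p}.
\end{equation*}
For the noise I would invoke Lemma~\ref{lemma 1.26} with $r:=s/(s-1)$; a direct algebraic computation shows that the theorem's assumption $s>d/(2\kappa_0-d)$ is exactly equivalent to the integrability constraint $2r<d/(d-\kappa_0)$ in the lemma, and $s$ may be enlarged if necessary so that the side condition $2r\le p$ also holds. This produces
\begin{equation*}
\|g(t,\cdot,u)-g(t,\cdot,v)\|_{H^{-\kappa_0}_p(l_2)}\le N\|\xi\|_{L_{2s}}\|u-v\|_{L_p}\le NK\|u-v\|_{L_p},
\end{equation*}
and interpolating $\|u-v\|_{L_p}$ between $H^\gamma_p$ and $H^{\gamma+2}_p$ gives the required $\varepsilon$-inequality (\ref{eq:assumption non-div}).

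Once all hypotheses are verified, Theorem~\ref{thm:main results non-div} delivers a unique solution $u\in\cH^{2-\kappa_0-c'_0}_p(T)$ together with the estimate
\begin{equation*}
\|u\|_{\bH^{2-\kappa_0-c'_0}_p(T)}\le N\|f_0\|_{\bH^{-\kappa_0-c'_0}_p(T)}+N\|g_0\|_{\bH^{-\kappa_0}_p(T,l_2)},
\end{equation*}
after which a final appeal to Remark~\ref{hu rmk} replaces $\|g_0\|_{\bH^{-\kappa_0}_p(T,l_2)}$ by $N\|h_0\|_{\bL_p(T)}$, completing the proof. The main obstacle along the way is the handling of the noise term: the Green-function bound $G(x)\sim|x|^{\kappa_0-d}$ encoded in Lemma~\ref{lemma 1.26} is what forces the sharp dimension restriction (\ref{dimension}), since the admissible range (\ref{eqn 9.10}) for $\kappa_0$ is precisely what makes $\gamma\in(-2,0)$ compatible with the integrability constraint $2r<d/(d-\kappa_0)$.
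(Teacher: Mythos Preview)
Your approach is essentially identical to the paper's: set $\gamma=-\kappa_0-c'_0$, invoke Lemma~\ref{lemma 1.26} to verify Assumption~\ref{assu:non-div}(iii) for the noise term via the interpolation $\|w\|_{L_p}\le\varepsilon\|w\|_{H^{\gamma+2}_p}+N(\varepsilon)\|w\|_{H^\gamma_p}$ (valid since $\gamma+2>0>\gamma$), and then apply Theorem~\ref{thm:main results non-div} together with Remark~\ref{hu rmk}. One caveat: your suggested fix of enlarging $s$ to force the side condition $2r\le p$ is not legitimate as written, since the hypothesis only controls $\|\xi\|_{L_{2s}}$ for the given $s$ and $L_{2s'}\not\hookrightarrow L_{2s}$ on $\bR^d$ when $s'>s$; the paper's own proof does not comment on this constraint at all.
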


\begin{proof}
We only need to check  if the conditions for Theorem \ref{thm:main results non-div} are satisfied with $\gamma:=-\kappa_0-c'_0$. Since $f(u)$ is Lipschitz continuous, we only check the conditions for  $g^k(u):=h(u)\eta_k$.
Let $r$ be the conjugate of $s$ and then
$2r<\frac{d}{d-{\kappa_0}}$ due to the assumption $\frac{d}{2\kappa_0-d} <s$.
Recall $\gamma$ is chosen such that $\gamma+c'_0=-{\kappa_0}$. Thus, By Lemma \ref{lemma 1.26}, for any $\varepsilon>0$,
$$
\|g(u)-g(v)\|_{H^{\gamma+c'_0}_p(l_2)}\leq N\|\xi\|_{L_{2s}} \|u-v\|_{L_p}\leq \varepsilon \|u-v\|_{H^{\gamma+2}_p}+N(\varepsilon)\|u-v\|_{H^{\gamma}_p},
$$
where the second inequality is due to $\gamma+2>0$, which is equivalent to ${\kappa_0} +c'_0<2$.
Therefore all the conditions for Theorem \ref{thm:main results non-div} are checked.
The theorem is proved.
\end{proof}

\begin{remark}
(i) By  (\ref {dimension}), there always exists $\kappa_0$ satisfying (\ref{eqn 9.10}).

(ii) The constant $2-\kappa_0-c'_0$ gives
the regularity of the solution $u$.
To see how smooth the  solution is,
recall  $c'_0=(2\beta-1)_+/{\alpha} +\kappa 1_{\beta=1/2}$.
It follows
$$
0< 2-\kappa_0-c'_0 < \begin{cases}
 2-\frac{d}{2}-\frac{2\beta-1}{\alpha} \quad &\hbox{if } \beta >1/2\\
 2-\frac{d}{2} &\hbox{if } \beta \leq 1/2 .
 \end{cases}
$$
If $\xi$ is bounded one can take $r=1$ and ${\kappa_0} \approx \frac{d}{2}$, thus $2-\kappa_0-c'_0$ can be as close as one wishes to the above upper bounds.
\end{remark}

\begin{remark}
Take $\alpha=1$ and $\beta\leq 1$ so that  the integral form of   (\ref{space-time})  becomes
\begin{eqnarray*}
u(t,x)=\int^t_0(a^{ij}u_{x^ix^j}+b^iu_{x^i}+cu+f(u))dt+ I^{1-\beta}_t \int^t_0 h(u)dB_t.
\end{eqnarray*}
By the stochastic Fubini theorem, at least formally
$$
 I^{1-\beta}_t \int^t_0 h(u)dB_t= \frac{1}{\Gamma(2-\beta)} \int^t_0 h(u(s))(t-s)^{1-\beta}dB_s.
$$
If $\beta=1$ then  the classical theory (see e.g. \cite[Section 8]{Krylov1999}) requires $d=1$ to have meaningful solutions, that is  locally integrable solutions. By Theorem \ref{thm space-time}, if $\beta<3/4$ then it is possible to   take $d=1,2,3$.  This might be    because  the operator $I^{1-\beta}_t$ gives certain  smoothing effect to  $B_t$ in the time direction.
\end{remark}

\end{document}